\newcommand{\beq}{\begin{equation}}
\newcommand{\eeq}{\end{equation}}
\newcommand{\bea}{\begin{aligned}}
\newcommand{\eea}{\end{aligned}}
\newcommand{\bdm}{\begin{displaymath}}
\newcommand{\edm}{\end{displaymath}}
\newcommand{\barr}{\begin{array}}
\newcommand{\earr}{\end{array}}
\newcommand{\ben}{\begin{enumerate}}
\newcommand{\een}{\end{enumerate}}
\newcommand{\bde}{\begin{description}}
\newcommand{\ede}{\end{description}}
\newtheorem{teor}{Theorem}
\newtheorem{prop}[teor]{Proposition}
\newtheorem{lem}[teor]{Lemma}  
\newtheorem{cor}[teor]{Corollary}  
\newtheorem{rem}[teor]{Remark}
\newcommand{\R}{\mathbb{R}}
\newcommand{\PP}{\mathbb{P}}
\newcommand{\N}{\mathbb{N}}
\newcommand{\E}{\mathbb{E}}
\newcommand{\noi}{\noindent}
\newcommand{\defi}{\hbox{\small{$\;\stackrel{\text{def}}{=}\;$}}}
\renewcommand{\a}{\alpha}
\newcommand{\g}{\gamma}
\newcommand{\de}{\delta}
\newcommand{\D}{\Delta}
\newcommand{\e}{\epsilon}
\newcommand{\s}{\sigma}
\newcommand{\be}{\beta}
\newcommand{\vare}{\varepsilon}
\begin{document}

\title{On a nonhierarchical version of the \\Generalized Random Energy Model. II. \\Ultrametricity.}
\author{Erwin Bolthausen\thanks{Universit\"at Z\"urich, Winterthurerstrasse 190, CH-8057 Zurich.
\texttt{eb@math.uzh.ch}} \quad Nicola Kistler\thanks{ Universit\"at Bonn,
Wegelerstr. 6, DE-53115 Bonn. \texttt{nkistler@wiener.iam.uni-bonn.de}}}
\maketitle

\begin{abstract}
We study the Gibbs measure of the nonhierarchical versions of the Generalized
Random Energy Models introduced in previous work. We prove that the
ultrametricity holds only provided some nondegeneracy conditions on the
hamiltonian are met.

\end{abstract}

\newpage 
${}$
\vspace{3cm} 
${}$
\tableofcontents

\newpage

\section{Introduction}

The study of spin glasses, a paradigm for the statistical mechanics of
disordered systems, has attracted a lot of interest ever since their
introduction in the field of condensed matter. Given the success of the Ising
model for an understanding of basic questions in statistical physics, probably
the most natural spin glass model is the Edwards-Anderson model which is a
spin model with lattice $\mathbb{Z}^{d},$ and \textit{random }nearest neighbor
interactions. Mathematically, this model remains to these days totally
untractable. The situation is much better for the Sherrington-Kirkpatrick
model (SK for short), which is of mean-field type, meaning that every spin
interacts with any other on equal footing. For the SK-model, a marvellous
theory has been introduced by Giorgio Parisi in the 1970's, cfr. for more on
this \cite{parisi}, which has been further developed by many. This is a fully
developed theory which has successfully been applied to many other problems,
for instance in combinatorial optimization, but there was no mathematically
rigorous foundation, till quite recently.

In a series of groundbreaking works by Francesco Guerra \cite{guerra} and
Michel Talagrand \cite{talagrand}, the Parisi formula for the free energy has
been proved to be correct in a class of mean field models, the SK model
included. It is however very puzzling that \textit{ultrametricity }has not
been proved, although it is at the very heart of the physics theory by Parisi
and others. A metric $d$ is called an ultrametric if it satisfies the strong
triangle inequality $d\left(  x,z\right)  \leq\max\left(  d\left(  x,y\right)
,d\left(  y,z\right)  \right)  $ for any three points. This is equivalent with
the property that two balls have either no intersection, or one is contained
in the other. What is ultrametricity in the context of spin glass theory? Take
for instance the SK-model, with spin configuration space $\Sigma_{N}=\left\{
\pm 1\right\}  ^{N},$ and the Hamiltonian%
\[
H\left(  \sigma\right)  \overset{\mathrm{def}}{=}-\frac{1}{\sqrt{N}}%
\sum_{1\leq i<j\leq N}g_{ij}\sigma_{i}\sigma_{j},
\]
where the $g$'s are i.i.d. standard Gaussians. Then a natural distance is the
$L_{2}$-distance on the Hamiltonian%
\begin{align*}
d\left(  \sigma,\sigma^{\prime}\right)    & =\left\Vert H\left(
\sigma\right)  -H\left(  \sigma^{\prime}\right)  \right\Vert _{2}\\
& =\sqrt{N}\sqrt{1-R\left(  \sigma,\sigma^{\prime}\right)  ^{2}},
\end{align*}
where $R$ is the overlap of two spin configurations $\sigma,\sigma^{\prime}:$
$R\left(  \sigma,\sigma^{\prime}\right)  \overset{\mathrm{def}}{=}N^{-1}%
\sum_{i=1}^{N}\sigma_{i}\sigma_{i}^{\prime}.$ (This is a metric on $\Sigma
_{N}$ only after identifying $\sigma$ with $-\sigma$). Evidently, $d$ is not
an ultrametric. The ultrametricity conjecture for the SK-model, unproved to
this day, states that it becomes asymptotically an ultrametric for large $N$
under the Gibbs measure. This means that if one picks (for large $N$) three
independent $\sigma,\sigma^{\prime},\sigma^{\prime\prime}$ under the Gibbs
measure, then $d\left(  \sigma,\sigma^{\prime\prime}\right)  \leq\max\left(
d\left(  \sigma,\sigma^{\prime}\right)  ,d\left(  \sigma^{\prime}%
,\sigma^{\prime\prime}\right)  \right)  ,$ up to a small error, with high
probability. A precise statement in our models is given below. However, the
ultrametricity picture in the physics theory goes much beyond this, as it
gives very precise predictions on the distribution of these overlaps.
Ultrametricity was very important in the development of the Parisi theory.
First, it appeared in a somewhat hidden way in the original replica
computation, where the variational formula found by Sherrington and
Kirkpatrick was solved by Parisi using an ultrametric ansatz. Later, and
alternative \textquotedblleft cavity\textquotedblright\ approach, avoiding the
(for mathematicians horrible) replica computation, was found by M\'{e}zard,
Parisi and Virasoro, but it also relies on a hierarchical ansatz. We cannot
give even a sketch of these developments, we only want the emphasize how
important this ultrametricity picture is in spin glass theory. For details,
see \cite{parisi}.

The situation is much better in the case of the Generalized Random Energy
Model (GREM for short) introduced by Bernard Derrida in the 1980's
\cite{derrida1} for which the full Parisi picture has been proved by Bovier
and Kurkova \cite{bk2}. The GREM is however hierarchically organized from the
start, so that one gets little information on the \textit{origin} of
ultrametricity. \newline

To provide some modest insights into this issue, we introduced in \cite{bokis}
a natural nonhierarchical generalization of the GREM, for which we proved that
the limiting free energy always coincides with that of a suitably constructed
GREM, thereby getting some evidence for the validity of the ultrametricity. In
this present work we address the more difficult problem of the Gibbs measure,
and prove that the ultrametricity indeed holds, but only if some additional
assumptions on the hamiltonian are met.\newline

The problem of ultrametricity has also been addressed in several other papers,
recently. A very interesting result is by Michael Aizenman and Louis-Pierre
Arguin in \cite{aizenman_arguin} who prove that if a point process equiped
with an abstract overlap structure has a certain stability property under the
\textit{cavity dynamics} (see \cite{ass} for more on this subject), then the
overlap structure has to be hierarchical.\newline

The study of spin glasses leads to new and interesting results in probability
theory. The Gibbs measure at low temperature is evidently associated with the
minima of the Hamiltonian. In the case of spin glasses, the Hamiltonian is a
field of random variables, in the SK-case, a Gaussian field. The study of
extrema of random fields is a classical problem in probability theory. For
instance the extremal process of $n$ independent and identically distributed
random variables (under some mild assumptions on the moments) converges for
$n\rightarrow\infty$ to a Poisson point process with a certain density. As a
byproduct of our analysis, we prove that the extremal process of highly
correlated gaussian random variables such as the energy levels of our
nonhierarchical GREMs always coincides with that of a corresponding
hierarchical field, cfr. Corollary \ref{extremal_process_theorem}.

\section{Nonhierarchical GREM and ultrametricity} 
We recall the construction of the non hierarchical GREMs. Throughout this paper, we fix a number $n \in \N$, and consider the set
$I = \{1, . . . ,n\}$, as well as a collection of positive real numbers $(a_J, J\subset I)$ such
that $\sum_{J\subset I} a_J = 1$. For convenience, we put $a_\emptyset
\defi = 0$. The relevant subset of $I$ will be only the
ones with positive a-value. For $A \subset I$, we set
\[\mathcal P_A \defi \{J \subset A: a_J > 0\}, \mathcal P \defi = \mathcal P_I .\]
For $n \in \N$, we set $\Sigma_N \defi = \{1, . . . , 2^N\}$. We also fix positive real numbers $\g_i,
i \in I$, satisfying $\sum_{i=1}^n \g_i =1$ and write $\Sigma_N^i \defi  
\Sigma_{\g_i N}$ where, for notational convenience, we assume that
$2^{\g_i N}$ is an integer. For $N \in \N$, we label the spin configurations $\sigma$ as
\[ 
\s = (\s_1, . . . ,\s_n), \s_i \in \Sigma_N^i,
\]
that is, we identify $\Sigma_N$ with $\Sigma_N^1\times \cdots \times \Sigma_N^n$. For $A\subset I =\{1,\dots, n\}$ we write 
\[
\mathcal{P}_{A}\overset{\mathrm{def}}{=}\mathbb{\{}J\subset A:a_{J} >0\mathbb{\}}, \quad \a(A) \defi \sum_{J\in {\cal P}_A} a_J,\quad \g(A) \defi \sum_{i\in A} \g_i, 
\]
and shorten $\mathcal{P}\overset{\mathrm{def}}{=}\mathcal{P}_{I}$. 

For $j = 1,\dots, n$ we set $\Sigma_{N}^{j} = \{1, \dots, 2^{\g_j N}\}$ and identify $\Sigma_{N}$ with \hbox{\small{$\Sigma_{N}^{1}\times\cdots\times\Sigma
_{N}^{n}$}}. For \hbox{\small{$J\subset I$}} with \hbox{\small{$J=\left\{  j_{1},\ldots,j_{k}\right\}$}} and \hbox{\small{$\ j_{1}<j_{2}<\ldots<j_{k}$}} we write
\hbox{\small{$\Sigma_{N,J}\overset{\mathrm{def}}{=}\prod\nolimits_{s=1}^{k}\Sigma_{N}^{j_{s}}$}}. For \hbox{\small{$\tau \in \Sigma_{N, J}$}} and \hbox{\small{$J'\subset J$}} we write 
\hbox{\small{$\tau_{J'}$}} for the projected configuration \hbox{\small{$(\tau
_{j};\; j\in J')$}}.

Our spin glass hamiltonian is defined as
\begin{equation} \label{Hamiltonian}
X_{\sigma}=\sum_{J\in\mathcal{P}}X_{\sigma_{J}}^{J}, 
\end{equation}
where the $X_{\s_J}^J, J\in \mathcal P, \s_J\in \Sigma_{N,j}$ are independent centered gaussian random variables with variance $a_JN$. The $X_\sigma$ are then
gaussian random variables, but they are correlated. $\PP$ and $\E$ will denote respectively probability and expectation  with respect to these random variables.  

The GREM corresponds to the case where subsets in $\mathcal P$ are "nested", i.e. 
\beq \label{gremdef}\mathcal P =\{J_1, \dots, J_m\}, \quad J_m \defi \{1,\dots, n_m\}, \eeq
for an increasing sequence $(J_\cdot)$. In the GREM case the natural metric on $\Sigma_N$ coming from the covariance structure 
\[
d(\s,\s') \defi \sqrt{\E\left[ (X_\s-X_{\s'})^2\right]}
\]
is an {\it ultrametric}, meaning that it satisfies the strenghtened inequality 
\[ 
d(\s,\s') \leq \max_{\s''}\big\{ d(\s,\s''), d(\s', \s'')\big\}.
\]
Remark that such a strenghtening of the triangle inequality is satisfied for distances on hierarchical spaces (e.g. trees), hence the identification of the GREM
with the {\it hierarchical models}. In the general case \eqref{Hamiltonian} considered here, it is easily seen that the natural distance induced by the covariance structure is no longer
an ultrametric. (To visualise things throughout, we suggest the reader to keep in the back of her mind
the paradigmatic nonhierarchical model with $n=3$ and $\mathcal P = \{\{1,2\}, \{1,3\}, \{2,3\}\}$, that is where $X_{\s} = X_{\s_1, \s_2}^{\{1,2\}}+X_{\s_1,
\s_3}^{\{1,3\}}+X_{\s_2, \s_3}^{\{2,3\}}$.)\\

Any of our models can be  "coarse-grained" in many ways into a GREM. For this, consider strictly increasing sequences of subsets of $I: \emptyset = A_o \subset
A_1 \subset \dots \subset A_K = I$. We do not assume that the $A_i$ are in $\mathcal P$. We call such a sequence a {\it chain} $\boldsymbol T = (A_o, A_1, \dots,
A_K)$. We attach weights 
\[
\hat a_{A_j} \defi \a(A_j\setminus A_{j-1}).
\]
Evidently, $\sum_{j=1}^K \hat a_{A_j} = 1$, and if we assign random variables $X_{\s}(\boldsymbol T)$ according to $\eqref{Hamiltonian}$ we arrive after an
irrelevant renumbering of $I$ at a GREM of the form \eqref{gremdef}. In particular, the corresponding metric $d$ is an ultrametric. 

We write $\texttt{tr}(\cdot)$ for averaging over $\Sigma_N$ (i.e. the coin tossing expectation over $\Sigma_N$). For a function $x: \Sigma_N \to \R$, set 
\[ 
Z_N(\be, x)\defi \texttt{tr} \exp[\be x], \qquad f_N(\be, x) \defi {1\over N} \log Z_N(\be, x),
\]
and define the usual finite $N$ partition function and free energy respectively by
\[ 
Z_N(\be) = Z_N(\be, X), \qquad f_N(\be) = f_N(\be, X).
\]
The following is the main results obtained in \cite{bokis} for the limiting free energy of nonhierarchical GREMs: 
\begin{teor}[Bolthausen and Kistler, \cite{bokis}] The limit
\beq f(\be) \defi \lim_N f_N(\be)\eeq
exists, and coincides with $\lim_{N\to \infty}\E f_N(\be)$. Moreover, $f(\be)$ is the free energy of a GREM. More precisely, there exists a chain $\boldsymbol T$ such that 
\beq 
f(\be) = f(\be, \boldsymbol T), \quad \be\geq 0.
\eeq
$f(\be, \boldsymbol T)$ is minimal in the sense that
\beq 
f(\be) = \min_{\boldsymbol S} f(\be, \boldsymbol S),
\eeq
where the minimum is taken over all chains $\boldsymbol S$.
\end{teor}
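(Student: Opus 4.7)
Regard $f_N(\be)$ as a function of the i.i.d.\ standard normals $(g_{J,\s_J})$ generating the $X^J_{\s_J}$. For every $\s$ the coefficients of these Gaussians in $X_\s$ have squared $\ell^2$-norm $N$, so the map $\omega\mapsto f_N(\be,\omega)$ is $\be/\sqrt N$--Lipschitz. Borell's Gaussian isoperimetric inequality then gives subgaussian concentration of $f_N(\be)-\E f_N(\be)$ on scale $1/\sqrt N$, reducing the problem to identifying $\lim_N \E f_N(\be)$.

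\textbf{Upper bound via Gaussian comparison.} Fix any chain $\boldsymbol S=(A_0,\dots,A_K)$ and split $\mathcal P=\bigsqcup_{k=1}^K \mathcal P_k$ with $\mathcal P_k=\{J\in\mathcal P:J\subset A_k,\ J\not\subset A_{k-1}\}$. Set $\xi^k_{\s_{A_k}}\defi\sum_{J\in\mathcal P_k} X^J_{\s_J}$, so that $X_\s=\sum_k \xi^k_{\s_{A_k}}$ and $\E(\xi^k_{\s_{A_k}})^2=N\hat a_{A_k}$. In the GREM $X_\s(\boldsymbol S)$ the $\xi^k$ attached to different $\s_{A_k}$ are by construction independent, whereas in our model they are not. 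Using that $\s_{A_k}=\s'_{A_k}$ implies $\s_J=\s'_J$ for every $J\in\mathcal P_k$, a direct check gives
\beq
\E X_\s X_{\s'}\;\geq\;\E X_\s(\boldsymbol S)X_{\s'}(\boldsymbol S),
\eeq
with equality on the diagonal. Kahane's Gaussian comparison, applied to $F(x)=\log\sum_\s e^{\be x_\s}$ (whose mixed second partials $\partial_\s\partial_{\s'}F$ are nonpositive for $\s\ne\s'$), then yields $\E f_N(\be)\leq \E f_N(\be,\boldsymbol S)$. Passing to the limit and invoking the GREM free energy formula of Derrida--Bovier--Kurkova gives $\limsup_N \E f_N(\be)\leq f(\be,\boldsymbol S)$ for every chain $\boldsymbol S$.

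\textbf{Matching lower bound.} The main obstacle is to produce a matching lower bound for \emph{some} chain $\boldsymbol T$. Choose $\boldsymbol T$ as a minimizer of the finite family $\{f(\be,\boldsymbol S)\}_{\boldsymbol S}$, and consider the truncated partition function
\[
\widetilde Z_N(\be)\defi \texttt{tr}\Big[\exp(\be X_\s)\prod_{k=1}^K\I\{\xi^k_{\s_{A_k}}-\xi^k_{\s_{A_{k-1}}}\in I_k^N\}\Big],
\]
where the windows $I_k^N$ are REM-type energy slabs tuned to the $k$-th level of the cascade along $\boldsymbol T$. A level-by-level truncated second-moment estimate, exploiting the fact that at the minimizer the level-slopes of the GREM line up so that no level is overpopulated, should yield $\E\widetilde Z_N^2\leq (\E\widetilde Z_N)^2\, e^{o(N)}$ and hence $\frac1N\log\widetilde Z_N\to f(\be,\boldsymbol T)$ in probability via Paley--Zygmund. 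The hard part is to certify that the increments $\xi^k_{\s_{A_k}}-\xi^k_{\s_{A_{k-1}}}$ behave, conditionally on the lower levels, like the independent GREM increments up to entropy-cost errors; along a non-minimizing chain the thresholds would be mismatched and $\E\widetilde Z_N^2/(\E\widetilde Z_N)^2$ would diverge, so minimality of $\boldsymbol T$ is used in an essential way.

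\textbf{Synthesis.} Combining the upper bound (applied to an arbitrary chain $\boldsymbol S$) with the lower bound at the minimizer $\boldsymbol T$ gives
$f(\be,\boldsymbol T)\leq f(\be)\leq\min_{\boldsymbol S} f(\be,\boldsymbol S)= f(\be,\boldsymbol T)$,
which yields all three conclusions of the theorem simultaneously.
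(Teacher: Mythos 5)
This theorem is quoted from \cite{bokis} and is not proved in the present paper, so there is no in-paper argument to measure you against; I can only judge the proposal on its own terms. Your first two steps are correct: Borell-type concentration reduces the statement to $\E f_N(\be)$, and the covariance comparison is valid, since $\E X_\s X_{\s'}=N\a\bigl(q(\s,\s')\bigr)\ge N\a(A_r)=\E X_\s(\boldsymbol S)X_{\s'}(\boldsymbol S)$, where $A_r$ is the largest chain set on which $\s,\s'$ agree completely, with equality of variances on the diagonal; Kahane's interpolation then gives $\E f_N(\be)\le \E f_N(\be,\boldsymbol S)$ for every chain $\boldsymbol S$, hence $\limsup_N \E f_N(\be)\le\min_{\boldsymbol S}f(\be,\boldsymbol S)$.

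The genuine gap is the matching lower bound, which is the actual content of the theorem in \cite{bokis}, and your third paragraph is a plan rather than a proof (``should yield'', ``the hard part is to certify''). Concretely: (i) truncating only the chain increments $\xi^k_{\s_{A_k}}-\xi^k_{\s_{A_{k-1}}}$ does not control the second moment, because the dangerous pairs $\s,\s'$ are those whose overlap $q(\s,\s')$ is \emph{not} a chain set; their contribution is governed by the partial sums $\sum_{J\in\widehat{\mathcal P}_{A,k}}X^J_{\s_J}$ over arbitrary $A\subsetneq A_k\setminus A_{k-1}$, and these must be truncated as well (this is precisely the role of conditions $\boldsymbol T_1$ and $\boldsymbol T_2$ in the present paper); with your truncation $\E\widetilde Z_N^2/(\E\widetilde Z_N)^2\le e^{o(N)}$ is simply asserted, not established, and critical subsets make it borderline even with the right truncation. (ii) Your mechanism for how minimality enters is off: defining $\boldsymbol T$ as an abstract minimizer of the finite family gives you none of the structure the estimate needs; what is actually used is the recursive construction recalled in Section 3.3 ($\be_j=\hat\rho(A_{j-1})$, with $A_j$ maximal for this slope), which guarantees strict subcriticality of non-chain subsets at each level, and one must separately identify this chain as the minimizer. (iii) In the frozen phase the slabs must be taken strictly inside the admissible region and then pushed to the critical values, with the low-temperature free energy recovered by a limiting/continuity argument; your sketch does not address this. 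As it stands the proposal proves only the upper-bound half of the theorem.
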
 
According to the above Theorem, the limiting free energy of {\it any} nonhierarchical model always coincides with that a certain hierarchical counterpart. It is
therefore a natural question up to which extent the random systems associated to a nonhierarchical model are genuinely
ultrametric. In this second and concluding work we address exactly this issue. More precisely, we provide a complete description of the Gibbs measure associated
to a hamiltonian \eqref{Hamiltonian}, which is the random probability on $\Sigma_N$ given by 
${\mathcal G}_{\be, N}(\s) \defi Z_N^{-1}(\be) \exp\big[\be X_\s\big]$. We prove here that the configuration space $\Sigma_N$ is
hierarchically organized under $\PP\otimes \mathcal G_{\be, N, \cdot}$, provided the hamiltonian satisfies some additional assumptions of {\it irreducibility}, while this
is not true in the most general case (a precise statement of the irreducibility 
condition will be given below). More precisely, if we write $\left<\cdot\right>_{\be, N}^{\otimes 3}$ for average with respect to the quenched Gibbs measure over the replicated space
$\Sigma_N^3$, we have

\begin{teor}[Ultrametricity.] \label{ultrametricity_theorem} If the hamiltonian is irreducible,  
\[ 
\lim_{N\to \infty} \E \left< d(\s,\s') \leq \max_{\s''}\Big\{ d(\s,\s''), d(\s', \s'')\Big\}\right>_{\be, N}^{\otimes 3} = 1, 
\]
for $\be$ large enough.
\end{teor}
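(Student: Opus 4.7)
The plan is to reduce the claim to the ultrametricity of the Gibbs measure for the coarse-grained GREM identified in the previous theorem. Let $\boldsymbol T^\star=(A_0,A_1,\ldots,A_K)$ be a minimising chain so that $f(\be)=f(\be,\boldsymbol T^\star)$, and let $X_\s(\boldsymbol T^\star)$ denote the associated GREM Hamiltonian. The covariance of $X_\s(\boldsymbol T^\star)$ induces a genuine ultrametric $d^\star$ on $\Sigma_N$, with $d^\star(\s,\s')$ depending only on the largest $j$ such that $\s_{A_j}=\s'_{A_j}$. Ultrametricity for the GREM Gibbs measure is classical (Bovier--Kurkova \cite{bk2}), so the whole task is to transport the hierarchical picture from $X(\boldsymbol T^\star)$ to the true Hamiltonian $X$.

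The first ingredient is the extremal process comparison announced as Corollary~\ref{extremal_process_theorem}: the top order statistics of $\{X_\s\}_{\s\in\Sigma_N}$ and of $\{X_\s(\boldsymbol T^\star)\}_{\s\in\Sigma_N}$ converge to the same Poisson cascade. This is the configurational refinement of the free energy identity and the natural way to establish it is a Gaussian interpolation: setting $X^t_\s=\sqrt{t}\,X_\s+\sqrt{1-t}\,X_\s(\boldsymbol T^\star)$ and differentiating appropriate multi-point exponential moments in $t\in[0,1]$, one reduces to signs of pairwise covariance differences of the two Hamiltonians, which have already been controlled in \cite{bokis} at the free energy level via the minimality of $\boldsymbol T^\star$.

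The decisive ingredient is to upgrade this energy-level equivalence to a \emph{configurational} equivalence for the Gibbs measure. Define the hierarchical overlap $q^\star(\s,\s')\defi\max\{j:\s_{A_j}=\s'_{A_j}\}$; the target is to show that under $\PP\otimes\mathcal G_{\be,N}^{\otimes 2}$ the full distance $d(\s,\s')$ and the GREM distance $d^\star(\s,\s')$ coincide up to $o(N)$ in squared value on typical pairs. This is where the irreducibility hypothesis intervenes: it will rule out the degenerate configurations of $(a_J)_{J\in\mathcal P}$ in which a competing chain achieves the same coarse-grained free energy, or in which an off-chain $X^J$ produces Gibbs-typical pairs whose full distance splits a single level of $q^\star$ into finer sublevels. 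Under irreducibility every $J$ with $a_J>0$ is \emph{subordinate} to the chain in the sense that the value of $q^\star(\s,\s')$ determines whether $\s_J=\s'_J$ for Gibbs-typical pairs, so $d$ and $d^\star$ agree on such pairs.

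With these two steps in place the conclusion is routine. A Gibbs-typical triple $(\s,\s',\s'')$ is, up to arbitrarily small error, a triple of near-maxima of the GREM, and the three hierarchical overlaps $q^\star$ then realise an ultrametric triple (two of the three are equal and bounded by the third). Hence $d^\star$ satisfies the ultrametric inequality as an \emph{identity} on such triples, and the distance equivalence from the previous step transfers the inequality to $d$ up to $o(1)$ in $\E\langle\cdot\rangle_{\be,N}^{\otimes 3}$. The main obstacle is the third paragraph: one must implement irreducibility quantitatively and show that the off-chain summands $X^J$ cannot conspire, on the exponentially large pool of near-maximisers of $X$, to generate nontrivial intra-level correlations; this is precisely the point at which the non-irreducible case, addressed elsewhere in the paper by an explicit counterexample, fails.
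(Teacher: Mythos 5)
Your third paragraph is the entire theorem, and you leave it as an acknowledged ``obstacle''. Neither the free-energy identity nor even equality of extremal processes can supply it: two Hamiltonians may produce identical limiting point processes of near-maximal \emph{values} on $\R$ while the configurations realizing those values have completely different geometry in $\Sigma_N$, so ultrametricity cannot be ``transported'' from the coarse-grained GREM by any comparison of energy levels alone. The paper proves the missing statement directly as a statement about configurations: Proposition \ref{nonultra_prop} shows that with probability tending to one there are \emph{no} non-ultrametric couples with partial energies in a fixed compact window. This rests on two-point Gaussian estimates (Lemma \ref{quadratic_expansion_two}) combined with the truncations $\mathbf{T}_1,\mathbf{T}_2$ of Proposition \ref{random_configuration_space}: condition $\mathbf{c}$ is what makes the truncation $\mathbf{T}_1$ non-vacuous and yields the suppression of overlaps splitting a single level (Proposition \ref{onset_um_one}), while condition $\mathbf{c}'$ forces agreement at a lower level to propagate upward along the chain (Proposition \ref{onset_um_two}). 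The genuinely delicate point, absent from your sketch, is the handling of \emph{critical} subsets, where a plain second-moment bound fails and one must restrict to configurations whose critical sub-energy runs a deficit of order $\sqrt N$; this is exactly where the constants $\mathcal C_l$ originate. Your assertion that under irreducibility every $J\in\mathcal P$ is ``subordinate to the chain'' for Gibbs-typical pairs is precisely the conclusion of these two propositions, and there is no shorter route from the combinatorial conditions $\mathbf{c},\mathbf{c}'$ to it.

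Your first step is also not correct as stated. The extremal process of the nonhierarchical model does \emph{not} coincide with that of the coarse-grained GREM $X(\boldsymbol T^\star)$: when critical subsets are present the limiting cascade carries the intensity constants $\mathcal C_l<1$ (Corollary \ref{extremal_process_theorem} and the remarks following it), which for genuinely nonhierarchical Hamiltonians admit no GREM/Brownian-bridge representation, so the two processes differ by a nontrivial deterministic factor. Moreover, a Gaussian interpolation $\sqrt{t}\,X+\sqrt{1-t}\,X(\boldsymbol T^\star)$ together with the covariance-sign information used for the free energy yields at best comparison inequalities for expectations of smooth functionals, not weak convergence of the marked extremal point process; the paper instead obtains Proposition \ref{partial_energies} by a Chen--Stein Poisson approximation performed \emph{after} the suppression/propagation step. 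Since the constants cancel upon normalization this particular inaccuracy would not by itself destroy ultrametricity, but it confirms that the comparison route does not produce the configurational control you need. The paper's actual deduction of Theorem \ref{ultrametricity_theorem} is from the full Parisi picture of Theorem \ref{main_theorem}, whose limiting marks are supported on the chain only.
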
 

The strategy to prove Theorem \ref{ultrametricity_theorem} relies on the observation that already the set of relevant configurations, those $\s$'s with energies "close" (we will make this precise)
to the ground state, is hierarchically organized in the large $N$-limit. Given the absence of chaotic behavior in the temperature, a feature which turns out to be shared 
by any of the models of Derrida's type, nonhierarchical GREMs included, this approach is particularly efficient, and additionally clarifies the coarsening of the
hierarchical structure depicted in \cite{bk2} for the GREM. This self organization is 
outcome of an energy/entropy competition, which, provided the irreducibility of the hamiltonian, leads to a "suppression and propagation of structures", as we
shall elucidate.  Some other notation: we set the overlap $q(\s,\s')$ of two configurations $\s,\s' \in \Sigma_N$ to be the subset of $I$ where they  agree, $q(\s,\s') \defi \{i\in I:
\s_i =\s_i'\}$.\\

\subsection{Suppression} We consider some models whose limiting free energy coincides with that of a Random Energy Models (the REM) which however display different microscopic behavior at the level of the Gibbs measure.
\begin{itemize}
\item[M1.] The first model is a hierarchical GREM with two levels, i.e. \hbox{\small{${\cal P} = \big\{ \{1\}, \{1,2\}\big\}$}} and parameters such that the
optimal chain is  ${\bf T} = \left\{ \{1,2\} \right\}$. In this case, some easy evaluations of gaussian integrals yield  
\[ \bea \label{gaussian_integral}
\lim_{N\to \infty} \PP\left[\exists \;\text{relevant}\, \s, \tau \in \Sigma_{N},  q(\s, \tau) = \{1\} \right] = 0 
\eea \]
(this also holds if we require $q(\s,\tau) =\{2\}$) implying that the relevant configurations either differ on both spins, in which case the random variables $X_\cdot$ are independent, or they coincide. This explains the REM-like
behavior also on the finer scale of the Gibbs measure. This observation is in fact the crux of our approach in the more general case of nonhierarchical models,
as the following model indicates. 
\item[M2.] Consider for example the case where \hbox{\small{${\cal P} = \big\{ \{1\},\{2\},\{1,2\}\big\}$}}
with ${\bf T} = \left\{\{1 ,2\}\right\}$. Also here, in the large $N$ limit, given two relevant configurations $\s,\tau\in \Sigma_N,\; \s_1 = \tau_1$ implies $\s_2 =
\tau_2$ (and the other way around) on a set of $\PP$-probability close to unity: this kind of  (nonhierarchical) dependencies  is also {\it suppressed}, and the
overlap of relevant configurations is either the  full or the empty set. That this is not
always the case may be seen by inspection of the following nonhierarchical model. 
\item[M3.] Consider \hbox{\small{${\cal P} = \big\{ \{1\},\{2\}\big\}$}} with ${\bf T} =\{\{1,2\}\}$: with non vanishing
probability, one can find relevant $\s,\tau, \tau' \in \Sigma_N$  such that $q(\s,\tau) = \{1\}$ and 
$q(\s,\tau') = \{2\}$; this kind of nonhierarchical dependencies are {\it not} suppressed. A moment thought shows that is due to the fact that $\mathcal P$
consists of two disjoint sets, $\{1\}$ and $\{2\}$: this does not prevent the system to display 'clustering' at the level of the free energy, but it
does have an impact on the behavior of the Gibbs measure (which, being a product measure on $\Sigma_{N,1}\times \Sigma_{N,2}$, must obviously contradict the ultrametricity). \\
\end{itemize}

\subsection{Propagation} 
\begin{itemize}
\item[M4.] Consider again a two-levels GREM, but with underlying parameters such that \hbox{\small{${\bf T} = \big\{\{1\}, \{1,2\}\big\}$}}. It is then easy to see that the probability that there exist relevant configurations $\s,\tau \in \Sigma_N$
such that $q(\s,\tau) =2$ is vanishing, but not if we require $q(\s, \tau) =1$: given that $\s,\tau \in \Sigma_N$ coincide on the second index ($\s_2 = \tau_2$)  then automatically on the first as well, in which case the two configurations coincide.
\item[M5.] Finally, let ${\cal P} = \big\{ \{1\},\{2\},\{2,3\}\big\}$ and ${\bf T} = \big\{ \{1\}, \{1,2,3\}\big\}$. In this case, also on the finer
level there is {\it clustering} on the second level (e.g. $\s_2 = \tau_2$ implies $\s_3 = \tau_3$), but it is not true that $\s_2 =
\tau_2$ implies $\s_1 = \tau_1$ nor $\s_3 =\tau_3$ implies $\s_1=\tau_1$. Intuitively, the lack of a "linking bond" from the second branch to the first prevents 
the coincidence of the spins indexed by $A_2$ to propagate "upwards" to the spins indexed by $A_1$. \\
\end{itemize}

The proof of Theorem \ref{ultrametricity_theorem} boils down to making the above explicit and rigorous in the general case. In fact, we will prove a stronger result, Theorem \ref{main_theorem} below, which confirms the "full Parisi Picture" for nonhierarchical,
irreducible models (and not only the ultrametricity): {\it i.} the law of the limiting Gibbs measure is given by the Poisson-Dirichlet distribution. {\it ii.} The law of the
overlaps is given by the coalescent introduced in \cite{boszni_ruelle}. {\it iii.} Overlaps and Gibbs measure are independent.\\ 

In order to formulate precisely the Main Theorem, we need an infrastructure which allows to attach marks independently to a Point Process: the way we do this is explained in great generality in
Section \ref{infrastructure} (and might be of independent interest), and specified to the setting of nonhierarchical models in Section \ref{ggrem_infrastructure},
where the irreducibility conditions and the Main Theorem are also
stated. The crucial steps behind the Main Theorem are highlighted in Section \ref{main_steps}, while the proofs are collected in Sections \ref{energy_levels}-\ref{proof_main_theorem}. \\

\section{The Parisi Picture for nonhierarchical GREM} \label{main_theorem_section}
\subsection{Attaching independent marks to a Point Process}\label{infrastructure}
Let $X$ be a locally compact space with countable base (lccb for short). We write ${\mathcal M}(X)$ for the set of Radon measures, and ${\mathcal M}_p(X)$ for the subset
of pure point measures. We also write $X^{(2)}$ for the set of two-element subsets of $X$. Clearly, $X^{(2)}$ is a lccb, too [{\it we can identify it for instance with
$(X^{2}\setminus D)/\sim$ , where $D$ is the diagonal $\{(x,x): x\in X\}$ and $(x,y) \sim (y,x)$}]. We write $\pi$ for the projection $(X^{2}\setminus D) \to X^{(2)}$.
\\

Any Radon measure $\mu$ on $X$ induces a Radon measure $\mu^{(2)}$ on $X^{(2)}$ by first taking the product measure $\mu\times \mu$ on $X^{2}$, restrict it to the
complement of the diagonal, and project it on $X^{(2)}$.  We write $\psi: {\mathcal M}(X) \to {\mathcal M}(X^{(2)})$ for this mapping. The image of a pure point measure is
clearly a pure point measure. Also, if $K$ is a compact subset of $X$, then $\rho_K: {\mathcal M}(X) \to {\mathcal M}(K)$ is given by restricting $\mu \in {\mathcal
M}(X)$ to $K$. This transforms pure point measures to pure point measures, of course. For compact $K$ and $\mu \in {\mathcal M}(K)$, the total mass $|\mu|$ of $\mu$ is
finite. If $\mu \in {\mathcal M}_p(K)$, this is just the number of points of the point measure $\mu$. {\it [It is easy to see that $\psi$ is continuous in the vague topology.
For this, consider  a continuous function with bounded support $f: X^{(2)} \to \R$. Then $f\circ \pi$ has compact support on $(X^{2}\setminus D)$ and therefore, we can extend it (by $0$) to a
function of compact support on $X^2$, which we still write as $f\circ \pi$. Assume $\mu_n \to \mu$ vaguely, for $\mu_n, \mu \in {\mathcal M}(X)$. Then
\hbox{\small{$\lim_{n\to \infty} \int f d\mu_N^{(2)} = \lim_{n\to \infty} \int f\circ \pi d\mu_n = \int f d\mu^{(2)}.$]}}}\\

\noi Let now $F$ be a finite set. If $Y$ is a lccb, we define ${\mathcal M}_{mp}(Y\times F)$ to be the subset of ${\mathcal M}_p(Y\times F)$
consisting of measures with the property that its marginal on $Y$ is in ${\mathcal M}_p(Y)$. In other words, the measures in ${\mathcal M}_{mp}(Y\times F)$ are of the
form 
\[ \sum_i \de_{\{y_i, a_i\}}, \quad y_i \in Y, \; a_i \in F\]
where the $y_i$ are all distinct, and $\{y_i\}$ is locally finite. It is clear that ${\mathcal M}_{mp}(Y\times F)$ is a measurable subset of ${\mathcal M}(Y\times F)$.
Weak convergence of probability measures on ${\mathcal M}_{mp}(Y\times F)$ refers to weak convergence of their extensions to ${\mathcal M}(Y\times F)$. \\
If $K\subset \subset Y$ is a compact subset, then we set $\hat \rho_K: {\mathcal M}_{mp}(Y\times F) \to {\mathcal M}_{mp}(K\times F)$ by taking the restriction. It is clear that
any probability measure $P$ on ${\mathcal M}_{mp}(Y\times F)$ is uniquely determined by the family $P \hat \pi_K^{-1}$, $K$ compact in $Y$. Furthermore, for any
consistent family of such probability measures $P_K$ on ${\mathcal M}_{mp}(K\times F)$, $K \subset\subset Y$, there is a unique probability measure $P$ on ${\mathcal
M}_{mp}(Y\times F)$ with $P \hat \pi_K^{-1} = P_K$. Consistency means that for $K\subset K'$ one has $P_K' \hat\pi_{K', K}^{-1} = P_K$, where $\pi_{K', K}: {\mathcal
M}_{mp}(K'\times F) \to {\mathcal M}_{mp}(K\times F)$. This follows easily from Kolmogoroff's Theorem. It suffices to have the $P_K$ consistently defined for a sequence
of compacta $(K_n)$  with $K_n \uparrow Y$. \\

Let $\N^{(2)} \defi \{(i,j): \; i,j \in \N, i< i\}$. We consider probability measures $Q$ on $F^{\N^{(2)}}$ which have the property that they are invariant under finite
permutations: a permutation ${\mathfrak s}: \N \to \N$ which leaves all the number except finitely many fixed induces a mapping $\phi_{\mathfrak s}: F^{\N^{(2)}} \to
F^{\N^{(2)}}$ in a natural way. We call $Q$ invariant if it is invariant under all such $\phi_{\mathfrak s}$. \\

Given a sequence of distinct points $\bold x = (x_1, \dots, x_N)$ in some compact set $K$, and $\bold f = (f_{ij}, 1\leq i < j \leq N),\; f_{ij} \in F$, we put 
\[L(\bold x, \bold f) \defi \sum_{i< j} \de_{x_i, x_j, f_{ij}} \in {\mathcal M}_{mp}(K^{(2)}\times F).\]
For fixed $\bold x$, this defines a mapping $L(\bold x, \cdot):F^{\hat N} \to {\mathcal M}_{mp}(K^{(2)}\times F)$, where $\hat N\defi \{(i,j): 1\leq i< j \leq N\}$. Given an invariant $Q$ on $F^{\N^{(2)}}$, $N \in \N$, we write
$Q_N$ for its restriction on $F^{\hat N}$. The $Q_N L(\bold x, \cdot)^{-1}$ is a probability measure on ${\mathcal M}_{mp}(K^{(2)}\times F)$, depending still on $N$ and
$\bold x$. We denote it by $\Pi(N, \bold x; \cdot)$. By the invariance property of $Q$, it only depends on the set $\{x_1, \dots, x_N\}$ (or on $\sum \de_{x_i}$).
Therefore, for fixed $N,\; \Pi(N, \cdot; \cdot)$ is a Markov Kernel from ${\mathcal M}_{p, N}(K) \defi \{\mu \in {\mathcal M}_{p}(K): \; |\mu| = N\}$ to ${\mathcal
M}_{mp}(K^{(2)} \times F)$. \\

With $X$ lccb, and $P$ a probability on ${\mathcal M}_p(X)$, we choose compacts $(K_n)$ with $K_n \uparrow X$. We also write $P_n \defi P \rho_{K_n}^{-1}$
on ${\mathcal M}_p(K_n)$. Then we define $\hat P_n$ on ${\mathcal M}_{mp}(K_n^{(2)} \times F)$ by 
\[ 
\hat P_n \defi \int P_n(d\mu) \Pi(|\mu|, \mu; \cdot).
\]
This satisfies the above required consistency property, and therefore gives rise to a probability measure on ${\mathcal M}_{mp}(X^{(2)} \times F)$, which evidently does
not depend on the sequence $(K_n)$ chosen, and is denoted by $P \sqcap Q$. \\

\subsection{Nonhierarchical GREM and Main Theorem}\label{ggrem_infrastructure}
We now put the nonhierarchical models into the above setting. \\

First, we specify $F$ further by choosing it to be the set $2^I$ of subsets of $I =\{1,\dots, n\}$. Also, we recall from \cite{bokis} that the free energy of a nonhierarchical GREM is determined by a chain ${\bf T}= (A_0, A_1, \dots, A_K), A_0 = \emptyset
\subset A_1 \subset \dots \subset A_K = I$. The chain is essential to construct the sequence of inverse of temperatures ${\boldsymbol \be} = (\be_0, \be_1,\dots, \be_K)$, $\be_0 = 0 <
\be_1 < \dots < \be_K < \be_{K+1} = \infty$ at which the free energy undergoes a phase transition. For $m=1,\dots, K-1$, we denote by ${\bf T}^{(m)} = (A_0, \dots, A_{m-1}, A_m)$ the chain restricted to
the first $m$-levels.  A fixed realization of the Hamiltonian induces an element of
${\mathcal M}_{mp}\left((\R^+)^{(2)} \times 2^I\right)$ by setting
\[ 
\sum_{\s,\s'} \de_{\{\mathcal G_{N, \be}(\s), {\mathcal G}_{N, \be}(\s'); q(\s,\s')\}}.
\]
We denote by $\Xi_{N,\be}$ its distribution under $\PP$. Analogously, by $\Xi_{N,\be}^{(m)}$ we understand the law of the element of ${\mathcal M}_{mp}\left((\R^+)^{(2)} \times
2^{A_m}\right)$ induced by the {\it $m^{th}$-marginal} of the Gibbs measure, the latter being the collection of points
\[ 
{\mathcal G}_{\be, N}^{(m)}(\tau) \defi \sum_{\s\in \Sigma_N: \s_{A_m} = \tau} {\mathcal G}_{\be, N}(\s), \quad \tau \in \Sigma_{N, A_m}. 
\]
Our main result is to determine the weak limits of the measure $\Xi_{\be, N}$ (and $\Xi_{\be, N}^{(m)}$) describing at the same time the limiting
Gibbs distribution, and the limiting overlap structure, where the latter will be given in terms of the coalescent on $\N$
introduced in \cite{boszni_ruelle}. This is a continuous time Markov
process $(\psi_t, t\geq 0)$ taking values in the compact set of partitions on $\N$. We call a partition $\mathcal C$ finer than $\mathcal D$, in notation ${\mathcal C} \succ \mathcal
D$, provided that the sets of $\mathcal D$ are unions of the sets of $\mathcal C$. The process $(\psi_t, t\geq 0)$ has the following properties: {\it i.} If $t\geq s$  then $\psi_s \succ \psi_t$. 
{\it ii.} The law of $(\psi_t, t\geq 0)$ is invariant under permutations. {\it iii.} $\psi_0 = 2^\N$. We denote the equivalence relation associated with $\psi_t$ by $\sim_t$. Given this coalescent, a sequence ${\bold t} = (t_0, \dots, t_K)$ of {\it times} $t_0 = 0 < t_1 < t_2
\dots < t_{K-1} < t_K = \infty$, and a chain $\bf T$ as above, we attach to each pair $i< j$ of natural numbers randomly the $A_{K-k}, 1\leq k \leq K$ (and
only these) where $k\defi \min\{l: i \sim_{t_l} j\}$. This defines a law $Q_{{\bf T}, {\bf t}}$ on $(2^I)^{\N^{(2)}}$. The law $Q_{{\bf T^{(m)}},
{\boldsymbol{t^{(m)}}}}$ is constructed analogously, outgoing from the sequence of times $\boldsymbol{t^{(m)}} = \{t_1, \dots, t_m\}$ and marks $A_{m-k}, 1\leq k\leq m$. 
 
\begin{itemize}
\item {\bf Condition $\bf c$}. For every $j=1,\dots, K$ and $A \subsetneq A_{j}\setminus A_{j-1},\; \exists J\in {\cal
P}_{A_j}\setminus {\cal P}_{A\cup A_{j-1}}, J' \in {\cal P}_{A\cup A_{j-1}}\setminus {\cal P}_{A_{j-1}}$ such that $(J\cap J') \setminus
A_{j-1} \neq \emptyset$,
\item {\bf Condition $\bf c'$}.  For all $j=2,\dots, K$ there exists $s\in A_{j-1}\setminus A_{j-2}, \; J\in {\cal P}_{A_j}\setminus {\cal
P}_{A_{j-1}}$ such that $J\ni s$.
\end{itemize}

These are the {\it irreducibility conditions}. In some loose sense, they ensure that the underlying graph is "connected enough". 
(To shed some light on this presumably opaque conditions, consider the models from the introduction: it is not difficult to check that the models M1, M2 and M4 satisfy both conditions $\bf c$ and $\bf c'$;
on the other hand, the model M3 does not satisfy condition $\boldsymbol c$, while the model M5 does not satisfy condition $\boldsymbol c'$. Therefore, none of our
results apply for the models M3 and M5, but for M1, M2 and M4.) Henceforth, we will assume that the hamiltonian is {\it irreducible}, meaning that it satisfies both $\bf c$ and $\bf c'$.\\

For a Poisson Point Process $(\eta_i, i\in \N)$ of density $x t^{-x-1}dt$ on $\R^+$ with $x \in (0,1)$, we understand by $(\overline \eta_i, i \in \N)$ the normalized
process where $\overline \eta_i = \eta_i / \sum_j \eta_j$, and denote by $P_x$ its law. \\

Given a hamiltonian  with chain ${\bf T} = \{A_0, A_1, \dots, A_K\}$ and associated sequence of phase transitions
${\boldsymbol \be} = \{\be_0, \be_1, \dots, \be_K\}$ we define the "times" through $t_j = \log(x_K/x_{K-j}),\;
x_j = x_j(\be) = \be_j/\be$.  The following is our Main Theorem:\\

\begin{teor}[Parisi Picture] \label{main_theorem} Assume the hamiltonian is irreducible. Then,   
\begin{itemize} 
\item if $\be >\be_K, \;\lim_{N\to \infty} \Xi_{N, \be} = P_{x_K} \sqcap Q_{{\bf T}, {\bf t}}$ weakly. 
\item if $\be>\be_m,\; \lim_{N\to \infty}\, \Xi_{N, \be}^{(m)} = P_{x_m} \sqcap Q_{{\bf T}^{(m)}, {\bf t}^{(m)}}$ weakly. 
\end{itemize}
\end{teor}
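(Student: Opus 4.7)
The plan is to prove both statements by induction on $K$ (respectively $m$), combining three ingredients: a limit theorem for the extremal process of the relevant configurations, the two irreducibility conditions $\mathbf{c}$ and $\mathbf{c}'$ which respectively suppress non-hierarchical overlaps and propagate agreements upward along the chain, and the $\sqcap$-infrastructure of Section \ref{infrastructure} which packages the Gibbs weights and the overlap marks into a single random measure on $\mathcal{M}_{mp}((\R^+)^{(2)}\times 2^I)$.

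First I would show that the point process of normalised Gibbs masses $\sum_\sigma \delta_{\mathcal{G}_{N,\beta}(\sigma)}$ converges to the Poisson--Dirichlet law $P_{x_K}$. For the hierarchical GREM attached to the optimal chain $\mathbf{T}$ this is the Bovier--Kurkova theorem of \cite{bk2}: the top of the energy spectrum is a Ruelle cascade and its normalised Gibbs masses at $\beta>\beta_K$ are Poisson--Dirichlet with parameter $x_K=\beta_K/\beta$. The substance of the step is therefore to argue that the non-hierarchical pieces $X^J_{\sigma_J}$ with $J$ not of the form $A_j\setminus A_{j-1}$ do not alter the top of the spectrum: this is obtained by lifting, from free energies to the full extremal process, the coarse-graining identification of \cite{bokis}. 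The same step will also deliver Corollary \ref{extremal_process_theorem}.

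The second step identifies the overlap marks. For two relevant configurations $\sigma,\tau$ let $k(\sigma,\tau)$ be the largest index $j$ with $\sigma_{A_j}=\tau_{A_j}$. The main intermediate claim is that, with $\PP$-probability tending to one, $q(\sigma,\tau)=A_{k(\sigma,\tau)}$, so that the overlap of a pair of relevant configurations always belongs to the chain. This is where irreducibility enters. Under condition $\mathbf{c}$, a first/second moment computation on pairs of relevant configurations whose block-overlap $q\cap(A_j\setminus A_{j-1})$ is a proper nonempty subset $A$ shows that the extra variance supplied by any $J\cap J'\setminus A_{j-1}$ forces such pairs outside the $O(1)$ window around the ground state --- the suppression mechanism of models M1, M2 versus M3. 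Under condition $\mathbf{c}'$, the existence, for each $j\geq 2$, of a bond $J\in\mathcal{P}_{A_j}\setminus\mathcal{P}_{A_{j-1}}$ touching $A_{j-1}\setminus A_{j-2}$ forces an agreement at level $A_j$ to propagate upward to $A_{j-1}$, ruling out M5-type situations. Combined, these two facts make the nested equivalence relations $\sigma_{A_j}=\tau_{A_j}$ on the set of relevant configurations a refinement sequence which, after labelling the new merges by $A_{K-k}$, is asymptotically distributed as an exchangeable coalescent.

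To identify this coalescent with $Q_{\mathbf{T},\mathbf{t}}$ and the joint limit with $P_{x_K}\sqcap Q_{\mathbf{T},\mathbf{t}}$, I would compute the finite-dimensional distributions on compact rectangles in $(\R^+)^{(2)}$: the times $t_j=\log(x_K/x_{K-j})$ emerge from the ratios of Poisson intensities at successive levels of the Ruelle cascade, and the independence of weights and marks follows from the invariance of $Q_{\mathbf{T},\mathbf{t}}$ under finite permutations combined with the exchangeability of the ranked cascade, plugged into the consistency argument at the end of Section \ref{infrastructure}. The marginal statement then follows by running the same scheme on the truncated chain $\mathbf{T}^{(m)}$. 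The main obstacle I expect is the uniform second-moment control required in the suppression step: one must kill all forbidden overlap patterns at all levels simultaneously, with error uniform in $N$, while balancing the extra variance against the entropy of the $O(1)$ window near the ground state --- this is precisely the point where the full strength of $\mathbf{c}$ is used, rather than merely its free-energy consequences through the optimality of $\mathbf{T}$.
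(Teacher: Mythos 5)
Your overall architecture (extremal process of relevant configurations, then suppression/propagation of overlaps via $\mathbf{c}$ and $\mathbf{c}'$, then identification with $P_{x_K}\sqcap Q_{\mathbf{T},\mathbf{t}}$) does match the paper's, but two of your steps fail as stated. The reduction of the weight statistics to Bovier--Kurkova by arguing that the non-hierarchical bonds ``do not alter the top of the spectrum'' is not available: there is no comparison theorem lifting the free-energy identification of \cite{bokis} to extremal processes, and in fact the top of the spectrum \emph{is} altered --- the limiting cascade has its Poisson intensities thinned by constants $\mathcal{C}_l<1$ whenever $A_l\setminus A_{l-1}$ contains critical subsets, i.e.\ $A$ with $\rho(A_{l-1},A\cup A_{l-1})=\be_l$ exactly. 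Establishing Proposition \ref{partial_energies} directly --- localization (Proposition \ref{existence_lemma}), the truncations $\mathbf{T}_1,\mathbf{T}_2$, and a Chen--Stein Poisson approximation carried out conditionally level by level --- is the core of the argument, not a citation; the constants cancel only after normalization, so your route would in particular not deliver Corollary \ref{extremal_process_theorem}, where they survive.

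Relatedly, your suppression mechanism (extra variance from $J\cap J'\setminus A_{j-1}$ pushing forbidden pairs out of the $O(1)$ window by a second-moment count) works only for non-critical $A$: at a critical subset the energy and entropy exponents balance exactly and the plain second moment is $O(1)$, not $o(1)$. The paper's remedy is the thinning $\mathbf{T}_1$, which requires relevant configurations to lower their sub-energy on each critical subset by $\vare_1\sqrt N$ and thereby buys the factor $e^{-\mathrm{const}\cdot\vare_1\sqrt N}$ of Lemma \ref{quadratic_expansion_two}; condition $\mathbf{c}$ enters not through a variance count but by guaranteeing (Remark \ref{complement_also_critical}) that this truncation is consistent --- the complement of a critical subset cannot itself be critical --- so that $\mathcal{C}_l>0$ and the thinning is asymptotically free. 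Your proposal has no mechanism for this borderline case, which is exactly where M3 is excluded. Finally, for the marginal statement with $\be_m<\be<\be_K$ you cannot simply rerun the scheme on $\mathbf{T}^{(m)}$: the unfrozen levels contribute the random log-partition functions $U_{\s(1),\dots,\s(m)}$, and one needs their self-averaging (Lemma \ref{fluctuations_rem}) together with the invariance in law of a PPP with exponential density under independent shifts, cf.\ \eqref{equality_distribution}, to absorb them into the level-$m$ cascade.
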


According to Theorem \ref{main_theorem}, the only possible "marks" in the large $N$-limit are thus the ones from the chain ${\bf T}$: this is
a stronger version of the ultrametricity, and in fact, one can easily see that it automatically entails Theorem \ref{ultrametricity_theorem}.

\subsection{Outline of the proof of the Main Theorem} \label{main_steps}
We first introduce some notations. \\

\noi{\bf Generalities.} We will refer to $(a_J, \g_i; J\in {\cal P}, i\in I)$ as "underlying parameters". \\

For $j=1\dots K$ we write \[\D_j \defi \a(A_{j})-\a(A_{j-1}), \quad G_j \defi
\g(A_j)-\g(A_{j-1}).\]
 
For a subset $A\subsetneq A_{j}\setminus A_{j-1}:$
\[\bea 
&\hspace{1cm} \widehat {\cal P}_{A,j} \defi {\cal P}_{A\cup A_{j-1}}\setminus {\cal
P}_{A_{j-1}},\qquad \widehat {\cal P}_{A,j}^c \defi {\cal P}_{A_j}\setminus \widehat {\cal P}_{A\cup A_{j-1}},\\
& \widehat \a_j(A)\defi \a(A\cup A_{j-1}) - \a(A_{j-1}),\qquad \widehat \a_j^c(A) \defi \Delta_j -\widehat \a_j(A), \\
&a_{N,j}(A) \defi \be_j \widehat \a_j(A) N- {1\over 2 \be_j}\log N +{1\over \be_j}\log \be_j \sqrt{2\pi \widehat \a_j(A)},
\eea \]
and  $a_{N,j} \defi a_{N,j}(A_{j}\setminus A_{j-1})$.\\

Finally, for $m=1,\dots, K$ we set 
\beq \label{aenne}
a_N^{m}\defi \sum_{j=1}^m a_{N,j} + \sum_{j=m+1}^K \left[{\be\over 2} \Delta_j N + {G_j\over \be} N \log 2\right], \qquad 
a_N \defi a_N^K.
\eeq

\noi{\bf Random variables.} By $(Y_J, J\in {\cal P})$ we denote a family of independent centered gaussians, $\E\big( Y_J^2 \big) = a_J$, 
and shorten notations by setting
\[ \bea
&Y_j \defi \sum_{J \in {\cal P}_{A_j}\setminus {\cal P}_{A_{j-1}}} Y_J,\quad \overline Y_j \defi \sqrt{N} Y_j -a_{N,j}, \quad \widehat Y_{j}
\defi \sum_{l=1,\dots, j} \overline Y_l,\\
& \hspace{2cm} Y_{j, A} \defi \sum_{J \in \widehat {\cal P}_{A,j}} Y_J, \qquad Y_{j, A}^c \defi  \sum_{J \in \widehat {\cal P}_{A,j}^c} Y_J.
\eea \]
By $(Z_J)$ we denote a faimly of random variables, independent of the $(Y_J)$ but with same distribution. We write
analogously $Z_{j,A}, Z_{j, A}^c, \overline Z_j, \widehat Z_j$. \\

\noi For $\s\in \Sigma_{N,A_j}$ we write \hbox{\small{$\s =
(\s(1),\dots, \s(j))$}} with $\s(k) = (\s_i; i\in A_{k}\setminus A_{k-1})$ and 
\[ \bea 
&X_{\s} = \sum_{j=1}^K X_{\s(1),\dots, \s(j)}, \quad X_{\s(1),\dots, \s(j)} \stackrel{\text{def}}{=}
\sum_{J\in {\cal P}_{A_j}\setminus {\cal P}_{A_{j-1}}} X_{\s_J}^J\\
& \overline X_{\s(1),\dots, \s(j)} \defi X_{\s(1),\dots, \s(j)} - a_{N,j}, \quad \widehat X_{\s(1),\dots, \s(j)} \defi \sum_{l=1}^j \overline
X_{\s(1),\dots, \s(l)}. \eea\]

\noi{\bf Critical subsets.} For $B\subset A$ let  
\[
\rho(B, A) \defi \sqrt{2\log 2 {\g(A)- \g(B) \over \a(A)- \a(B)}},\quad \hat \rho(B, A) \defi \min_{A: A\supset B, A\neq B}
\rho(A,B).
\]
The sequences $(A_1, \dots, A_K)$ and $(\be_1,\dots, \be_K)$ are constructed by recursion (cfr. \cite{bokis}). They enjoy the following properties: first, $\be_j =
\hat \rho(A_j)$; second, for all $A \supset A_{j-1}$ with $\be_j = \rho(A_{j-1}, A)$ one has $A\subset A_j$, i.e. $A_j$ is maximal
with $\be_j = \rho(A_{j-1}, A_j)$. Accordingly, there may be strict $A\subsetneq A_{j}\setminus A_{j-1}$ such that 
\[
\rho(A_{j-1}, A\cup A_{j-1}) = \be_j \; \left(\text{i.e.} \quad {\g(A_j) - \g(A_{j-1} \cup A) \over  \widehat \a_j(A)} = {\be_j^2 \over 2}\log 2\right),
\] in which case we call the subsets {\it critical}.\\

\noi{\bf Ultrametricity.} We say that $\s, \tau \in \Sigma_{N,A_j}$ (for some $j = 1, \dots, k$) form a {\it non ultrametric couple} if there exists $k=1,\dots, j$ and $s\in A_{k}\setminus A_{k-1}$ such that 
$\s_s = \tau_s$ but $\s_{A_k} \neq \tau_{A_k}$ (i.e. $\s_r \neq \tau_r$ for some $r\in A_k$).\\

\noi{\bf Point processes.} PP will stand for {\it Point Process} and PPP
for {\it Poisson Point Process}. For a PP $(y_i,\; i\in \N)$ such that $\sum_i y_i < \infty$ almost surely, we may consider new points given by  $\overline y_i \defi {y_i/\sum_j y_j}$, and write
${\cal N}\big((y_i, i\in \N)\big) \defi (\overline y_i,\; i\in \N)$ for the normalization procedure. We also encounter superpositions of PP in which case it
is notationally useful to introduce multi-indices ${\bf i} \defi (i_1, \dots, i_j)$ (for $j\in \N$ to be specified) and denote by ${\bf
i}_k = (i_1, \dots, i_k)$ the restriction to the first $k$ indeces, $k<j$.\\

\noi{\bf Constants.} We denote by $const$ a strictly positive constant, not necessarily the same at different occurences. For $X,Y >0$ we write $X \lesssim Y$ if $X\leq const \times Y$  (for sequences: $X_N\lesssim Y_N$ stands for $X_N \leq
const \times Y_N$ for $N\geq N_o$ for some $N_o\in \N$). \\

The first step in the proof of the Main Theorem will be to control the {\it energy levels}: consider for $j=1,\dots, K$ the collection
\hbox{\small{$\big(\widehat X_{\s(1),\dots, \s(j)}; \s\in \Sigma_{N,A_j}\big)$}} - the process of the energy levels
corresponds to the choice $j=K$.    
\begin{prop} \label{nonultra_prop} Let $\,\Diamond \subset \R$ be a compact set. To given $\vare >0$, 
$$\PP\left[ \exists \; \text{nonultrametric couples}\; \s,\tau\in \Sigma_{N, A_j}:\; \widehat X_{\s(1),\dots, \s(j)}, \widehat X_{\tau(1),\dots, \tau(j)} \, \in \Diamond \right] \leq \vare, $$
for large enough $N$.
\end{prop}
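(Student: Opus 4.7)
I would prove Proposition \ref{nonultra_prop} by a first-moment argument. Let $\mathcal{N}_j$ denote the number of ordered non-ultrametric pairs $(\s,\tau)\in\Sigma_{N,A_j}^2$ with both $\widehat X_{\s(1),\dots,\s(j)},\widehat X_{\tau(1),\dots,\tau(j)}\in\Diamond$; by Markov's inequality it suffices to bound $\E[\mathcal N_j]$ below $\vare$ for large $N$. I would decompose the sum over pairs by the overlap profile: let $k\in\{1,\dots,j\}$ be the smallest level at which non-ultrametricity manifests, $k^\ast\in\{0,\dots,k-1\}$ the largest level with $\s_{A_{k^\ast}}=\tau_{A_{k^\ast}}$, and $A:=q(\s,\tau)\cap(A_k\setminus A_{k-1})\subsetneq A_k\setminus A_{k-1}$ nonempty; by minimality of $k$ the pair disagrees on every spin of $A_l\setminus A_{l-1}$ for $k^\ast<l<k$, while the overlap above level $k$ is unconstrained. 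A union bound over $(k,k^\ast,A)$ and over the free overlap at levels $l>k$ reduces the task to estimating $\E[\mathcal{N}_{k,k^\ast,A}]$ for each fixed profile.

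For a fixed profile, I would use the level-wise independence of the $X^J$'s across the partition $\{\mathcal{P}_{A_l}\setminus\mathcal{P}_{A_{l-1}}\}_l$ to factor $\PP[\widehat X_\s,\widehat X_\tau\in\Diamond]$ essentially level-by-level. At each level $l$, the joint Gaussian density of $(Y_l(\s),Y_l(\tau))$ near $(a_{N,l},a_{N,l})$ contributes a factor whose exponent is determined by the shared variance $\sum_{J\in\mathcal{P}_{A_l}\setminus\mathcal{P}_{A_{l-1}},\,J\subset q(\s,\tau)}a_JN$. The calibration $a_{N,l}\approx\be_l\Delta_l N-\tfrac{1}{2\be_l}\log N+O(1)$, with $\be_l$ coming from the recursive chain construction, makes count-times-probability balanced at polynomial order in $N$ for purely ultrametric profiles. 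For a non-ultrametric profile, the a priori inequality $\rho(A_{k-1},A_{k-1}\cup A)\ge\be_k$ provided by the maximality of $A_k$ yields non-positivity of the level-$k$ exponent; condition $\mathbf{c}$ then upgrades this to strict negativity via the linking pair $J\in\mathcal{P}_{A_k}\setminus\mathcal{P}_{A\cup A_{k-1}}$ and $J'\in\mathcal{P}_{A\cup A_{k-1}}\setminus\mathcal{P}_{A_{k-1}}$ guaranteed by $\mathbf{c}$: the variance $a_J$ of the linking variable contributes to $\Delta_k$ but not to the shared $\widehat\a_k(A)$, producing a strictly positive decay constant that persists through the Gaussian estimate. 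Condition $\mathbf{c'}$ plays the analogous role at the intermediate levels $k^\ast<l<k$ and at the free levels $l>k$: the cross-level $J$'s it guarantees ensure the sums over unconstrained $\s(l),\tau(l)$ factor in the expected hierarchical way and do not reintroduce balance.

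The main obstacle is the Gaussian bookkeeping required to convert the weak a priori inequality from the maximality of $A_k$ into a strict exponential gain for every non-ultrametric $A$. The naive level-$k$ estimate yields an exponent which, under maximality alone, is only non-positive; closing the gap to strict decay requires extracting quantitatively the contribution of the linking variables guaranteed by $\mathbf{c}$, and combining this with the hierarchical structure at all other levels so that the gain is not eroded by the free-level factors. A companion subtlety is the uniform treatment of the polynomially many choices of $(k,k^\ast,A)$ and of the continuum of overlap patterns at levels above $k$, for which condition $\mathbf{c'}$ is indispensable: without it, as the model $M_5$ shows, agreement at level $k$ need not propagate, so a separate mechanism would be required.
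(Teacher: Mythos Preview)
Your first-moment argument has a genuine gap at the critical subsets. For a critical $A\subsetneq A_k\setminus A_{k-1}$, by definition $\rho(A_{k-1},A_{k-1}\cup A)=\be_k$ exactly, and condition~$\mathbf{c}$ does \emph{not} upgrade this to a strict inequality: criticality is a statement about the ratio $\g(A)/\widehat\a_k(A)$, which is unaffected by the existence of a linking $J$. If you carry out the untruncated two-point Gaussian computation at level $k$ (writing $a=\widehat\a_k(A)$, $c=\widehat\a_k^c(A)$, so $\Delta_k=a+c$), you find
\[
2^{(2G_k-\g(A))N}\,\PP\big[\overline Y_k,\overline Z_k\in\Diamond\text{ sharing }Y_{k,A}\big]\;\asymp\;N^{-1}\exp\Big[\tfrac{\be_k^2\,ac}{2(c+2a)}\,N\Big],
\]
which diverges exponentially. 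So $\E[\mathcal N_j]\to\infty$ whenever a critical subset is present, and Markov's inequality yields nothing.

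The paper circumvents this by a two-stage argument that your proposal is missing. First (Propositions~\ref{existence_lemma} and~\ref{random_configuration_space}) one shows that any $\s$ with $\widehat X_{\s(1),\dots,\s(j)}\in\Diamond$ lies, with high probability, in the truncated set $\Sigma_{N,A_j}^{R,\vare_1,\vare_2}$ where in particular the ``critical-direction'' constraint $\mathbf{T}_1$ holds: $Y_{k,A}/\widehat\a_k(A)-Y_{k,A}^c/\widehat\a_k^c(A)\le-\vare_1$. This step costs nothing asymptotically (the configurations removed are negligible) but is exactly what makes a first-moment bound work: under $\mathbf{T}_1$ the two-point probability acquires the factor $\exp[-const\cdot\vare_1\sqrt N]$ (Lemma~\ref{quadratic_expansion_two}\,b), and the count-times-probability then decays (Propositions~\ref{onset_um_one}, \ref{onset_um_two}). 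Condition~$\mathbf{c}$ enters not to make the exponent strict, but to guarantee that $\mathbf{T}_1$ is a nonvacuous restriction (Remark~\ref{complement_also_critical}): it prevents $A$ and its complement from being simultaneously critical, which would render $\mathbf{T}_1$ self-contradictory. You need to insert this localization/truncation step before any moment bound can succeed.
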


The configurations which survive the passage to the limit - in this sense: relevant - must therefore satisfy hierarchical constraints; in fact, the Proposition implies that the overlap of configurations
falling into given compacts are, with probability arbitrarily close to unity, in the chain, and in the chain only (or, more precisely, in the chain restricted to the first $j$ sets, when considering the "partial energies"). It is thus very natural to expect that their statistics are given in the thermodynamical limit by the hierarchical models. To formalize this, we first observe that collections of points such as the \hbox{\small{$\big(\widehat X_{\s(1),\dots, \s(j)}; \s\in \Sigma_{N,A_j}\big)$}} naturally induce elements of 
\hbox{\small{${\cal M}_{mp}\big(\R^{(2)} \times 2^{A_j}\big)$}}, namely 
\[ 
{\cal N}_{N,j} \defi \sum_{\s, \tau \in \Sigma_{N,A_j}} \de_{\{\widehat X_{\s(1),\dots, \s(j)},\widehat X_{\tau(1),\dots, \tau(j)}; q(\s,\tau)\}}. 
\] 
We denote by $\widehat X_{N, j}$ the law of such an element. \\

The "limiting object" will be given in terms of the Derrida-Ruelle processes \cite{ruelle}. Consider a PP $(y_{\bf i}, {\bf i \in\N}^j)$ with the following properties: {\it i.} For $l=1,\dots, j$ and multi-index ${\bf i}_{l-1}$, the point process \hbox{\small{$(y_{{\bf i}_{l-1},
i_l}^l;\; i_l\in \N)$}} is poissonian with density \hbox{\small{${\mathcal C}_{l} \cdot \be_l e^{-\be_l t} dt$}} on $\R$. {\it ii.} The $y^l$ are independent for different $l$.
{\it iii.} $(y_{{\bf i}_{l-1}, i_l}^l;\; i_l \in \N)$ are independent for different ${\bf i}_{l-1}$. {\it iv.} If \hbox{\small{$A_{l}\setminus A_{l-1}$}} contains no critical subsets, then ${\mathcal C}_{l} = 1$, otherwise   
\[ 
{\mathcal C}_{l} = \PP\left[\left\{{Y_{l, A}\over \widehat \a_l(A)} - {Y_{l, A}^c\over \widehat \a^c_l(A) }\leq 0 \right\} \; \forall A \subsetneq A_{l}
\setminus A_{l-1}, \; A\;\text{is critical} \right]. 
\]
Given two points $y_{\bf i}$ and $y_{{\bf i}'}$, we define their overlap $q_{\bf i, \bf i'}$ to be $A_{m}$ where $m =
\max\big\{l\leq j:\; {\bf i}_l = {\bf i}'_l\big\}$. A fixed realization of the PP induces naturally an element ${\cal N}_{j} \in {\cal M}_{mp}\big(
\R^{(2)} \times 2^{A_j}\big)$ whose law is denoted $\widehat X_j$. 

\begin{prop} \label{partial_energies}  $\widehat X_{j,N}$ converges weakly to $\widehat X_j$. \end{prop}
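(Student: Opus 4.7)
The plan is to induct on $j$, combining the ultrametricity control supplied by Proposition \ref{nonultra_prop} (for the mark component) with a level-by-level extremal analysis of the Gaussian partial energies (for the ground component). Weak convergence on $\mathcal{M}_{mp}(\R^{(2)} \times 2^{A_j})$ is tested on compact windows, so I would work throughout on a fixed compact $\Diamond \subset \R$. Proposition \ref{nonultra_prop} immediately handles the marks: with $\PP$-probability arbitrarily close to one, no couple $\s, \tau \in \Sigma_{N, A_j}$ with both partial energies in $\Diamond$ is a non-ultrametric couple, so the surviving marks $q(\s, \tau)$ must belong to the chain $\{A_0, A_1, \dots, A_j\}$. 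This matches exactly the cascade rule $q_{{\bf i}, {\bf i'}} = A_m$ iff the two multi-indices agree on their first $m$ coordinates, and reduces the problem to weak convergence of the unmarked ground process -- the marks are then reattached a posteriori via the construction of Section \ref{infrastructure}.

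The base case $j = 1$ is an extremal analysis for the $2^{G_1 N}$ Gaussians $X_{\s(1)}$ of variance $\Delta_1 N$ recentered by $a_{N, 1}$. The constant $a_{N, 1}$ is precisely the Mills-ratio scale making the raw sum converge, in the absence of correlations, to a PPP of intensity $\be_1 e^{-\be_1 t} dt$ on $\Diamond$. Correlations enter through the critical subsets $A \subsetneq A_1$: configurations agreeing on $A$ share a Gaussian piece of variance $\widehat \a_1(A) N$, and the criticality $\g(A)/\widehat \a_1(A) = G_1/\Delta_1$ places this shared contribution at the same exponential scale as the independent one. The factor $\mathcal{C}_1$ then arises as the asymptotic conditional probability that, given $X_{\s(1)} \approx a_{N, 1}$, the rescaled decomposition satisfies $Y_{1, A}/\widehat \a_1(A) \leq Y_{1, A}^c/\widehat \a_1^c(A)$ for every critical $A$; a violation would mean the extremum is inherited from a coarser equivalence class, which Proposition \ref{nonultra_prop} forbids to coexist with genuine level-$1$ points in $\Diamond$.

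The inductive step is structurally identical. Writing $\widehat X_{\s(1), \dots, \s(j)} = \widehat X_{\s(1), \dots, \s(j-1)} + \overline X_{\s(1), \dots, \s(j)}$ and using that the fresh Gaussians $X_{\s_J}^J$ for $J \in \mathcal{P}_{A_j} \setminus \mathcal{P}_{A_{j-1}}$ are independent of all lower-level ones, the conditional law of the level-$j$ process given the level-$(j-1)$ process is a superposition of independent replicas of the level-$j$ branch process, one above each surviving ancestor. Each replica is analyzed exactly as in the base case with parameters $(\be_j, \Delta_j, G_j)$, yielding a PPP of intensity $\mathcal{C}_j \be_j e^{-\be_j t} dt$; superimposing produces the Derrida-Ruelle cascade $(y_{{\bf i}}; \, {\bf i} \in \N^j)$.

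The principal obstacle is the rigorous derivation of $\mathcal{C}_l$. One must show that, conditional on $X_{\s(1), \dots, \s(l)}$ landing in a compact window above $a_{N, l}$, the rescaled fluctuations of the subsums $\sum_{J \in \widehat{\mathcal{P}}_{A, l}} X_{\s_J}^J$ and $\sum_{J \in \widehat{\mathcal{P}}_{A, l}^c} X_{\s_J}^J$ around their tilted means converge jointly to independent centered Gaussians with variances $\widehat \a_l(A)$ and $\widehat \a_l^c(A)$, i.e. precisely to $Y_{l, A}$ and $Y_{l, A}^c$. Once this joint Gaussian limit is established, the event defining $\mathcal{C}_l$ has a nontrivial limiting probability; irreducibility Conditions $\bf c$ and $\bf c'$ guarantee that $\widehat{\mathcal{P}}_{A, l}$ and $\widehat{\mathcal{P}}_{A, l}^c$ are both nondegenerate and that no level is absorbed by a preceding one, so $0 < \mathcal{C}_l \leq 1$ and the branches decouple cleanly across levels; absent these conditions, the factor $\mathcal{C}_l$ would collapse and the cascade structure would be destroyed, as illustrated by models M3 and M5.
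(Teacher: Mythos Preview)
Your high-level architecture is the same as the paper's: induct on $j$, use Proposition~\ref{nonultra_prop} to force the marks into the chain, and then establish Poisson convergence of the level-$j$ increments conditionally on the level-$(j-1)$ process. But two essential ingredients are missing from your sketch, and one step is stated incorrectly.

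First, you give no mechanism for proving the Poisson limit in the presence of correlations. Saying ``extremal analysis'' is not enough: the $X_{\s(1),\dots,\s(j)}$ for different $\s(j)$ share Gaussian pieces whenever $\s_J = \tau_J$ for some $J \in \mathcal P_{A_j}\setminus\mathcal P_{A_{j-1}}$, and these dependencies persist on the extremal scale (especially through critical subsets). The paper handles this by first passing to the truncated configuration space $\Sigma_{N,A_j}^{R,\vare_1,\vare_2}$ (Propositions~\ref{existence_lemma} and~\ref{random_configuration_space} make this harmless) and then applying the Chen--Stein method: the second-moment terms in the Chen--Stein bound are exactly the two-point probabilities of Lemma~\ref{quadratic_expansion_two} and Proposition~\ref{onset_um_one}, and these decay like $\exp(-\text{const}\cdot\vare_1\sqrt N)$. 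Without such a quantitative tool you cannot rule out clustering of the extremes.

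Second, your derivation of the thinning constant $\mathcal C_l$ is not right as stated. Conditioning on the sum $X_{\s(1),\dots,\s(l)}$ falling in a compact window does \emph{not} leave the subsums $\sum_{J\in\widehat{\mathcal P}_{A,l}}X_{\s_J}^J$ and $\sum_{J\in\widehat{\mathcal P}_{A,l}^c}X_{\s_J}^J$ jointly independent: they are constrained by the sum. The paper's key observation (Lemma~\ref{miracle_constant}) is cleaner and different: the linear combination $Y_{l,A}/\widehat\a_l(A) - Y_{l,A}^c/\widehat\a_l^c(A)$ is \emph{exactly orthogonal} to $Y_l = Y_{l,A}+Y_{l,A}^c$ (a one-line covariance check), hence genuinely independent of it. So the probability $p_N^{\vare_1,\vare_2}(j,\Diamond)$ factorises exactly into $\mathcal C_{j,\vare_1}\cdot p_N(j,\Diamond)$ plus negligible terms, and the thinning is not a conditional limit but a product from the outset. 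Your interpretation of a violation as ``the extremum being inherited from a coarser equivalence class'' is heuristically suggestive but not the mechanism used; the truncation $\mathbf T_1$ is imposed first, shown to be asymptotically free (Proposition~\ref{random_configuration_space}), and only then does the orthogonality give the constant.
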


It easily follows from the above Theorem (with $j=K$) that the process of extremes associated to the energy levels of an irreducible hamiltonian coincides, in the thermodynamical limit, with that of a
hierarchical model. In fact, denoting by $\widetilde X_K$ the first marginal of $\widehat X_K$ (that is: the law of the point process $\sum_{\boldsymbol i \in
\N^K} \de_{y_{\boldsymbol i}}$) we have:

\begin{cor}\label{extremal_process_theorem}
Consider an irreducible hamiltonian ${X_{\sigma}, \sigma \in \Sigma_N}$ and let $a_N$ be given by \eqref{aenne}. Then, with the above notations: the extremal process
\[
\sum_{\sigma\in \Sigma_N} \de_{X_\sigma - a_N} 
\] 
converges weakly to $\widetilde X_K$.
\end{cor}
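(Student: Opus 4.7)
The plan is to deduce Corollary \ref{extremal_process_theorem} from Proposition \ref{partial_energies} at $j=K$ by projecting the two-point marked process onto its underlying one-point configuration.

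First I would unwind the normalization. From the chain decomposition $X_\sigma = \sum_{l=1}^K X_{\sigma(1),\dots,\sigma(l)}$, the definition $\overline X_{\sigma(1),\dots,\sigma(l)} = X_{\sigma(1),\dots,\sigma(l)} - a_{N,l}$, and the telescoping
$\widehat X_{\sigma(1),\dots,\sigma(K)} = \sum_{l=1}^K \overline X_{\sigma(1),\dots,\sigma(l)}$, one immediately gets
\[
\widehat X_{\sigma(1),\dots,\sigma(K)} \;=\; X_\sigma - \sum_{l=1}^K a_{N,l} \;=\; X_\sigma - a_N,
\]
the last equality because the ``leftover'' block $\sum_{j=m+1}^K[\cdots]$ in \eqref{aenne} is empty at $m=K$. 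Consequently
\[
\sum_{\sigma \in \Sigma_N} \delta_{X_\sigma - a_N} \;=\; \sum_{\sigma \in \Sigma_{N,A_K}} \delta_{\widehat X_{\sigma(1),\dots,\sigma(K)}} \;=\; \Phi(\mathcal N_{N,K}),
\]
where $\Phi : \mathcal M_{mp}(\R^{(2)} \times 2^{A_K}) \to \mathcal M_p(\R)$ is the ``vertex-support'' map sending $\sum_{i \neq j}\delta_{\{x_i,x_j;f_{ij}\}}$ to $\sum_i \delta_{x_i}$, i.e.\ the natural inverse of the $\psi$-operation from Section \ref{infrastructure}.

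Next I would apply the continuous mapping theorem. The map $\Phi$ is continuous for the vague topology at any configuration whose vertex set is infinite with no finite accumulation point. The limiting Derrida--Ruelle cascade $\sum_{\boldsymbol i \in \N^K}\delta_{y_{\boldsymbol i}}$ possesses these properties almost surely: its top-level intensity is Poissonian of density $\mathcal C_K \be_K e^{-\be_K t}\, dt$, so the atoms are infinite and locally finite on $\R$. Thus $\Phi$ is continuous on a set of $\widehat X_K$-full measure, and Proposition \ref{partial_energies} gives
\[
\Phi(\mathcal N_{N,K}) \;\Rightarrow\; \Phi(\mathcal N_K) \;=\; \sum_{\boldsymbol i \in \N^K}\delta_{y_{\boldsymbol i}},
\]
whose distribution is $\widetilde X_K$ by definition. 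This is the claim.

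The only non-bookkeeping step is the a.s.\ continuity of $\Phi$, which however is essentially the inverse of the continuity of $\psi$ already recorded in Section \ref{infrastructure}: the operation $\mu \mapsto \mu^{(2)}$ is vaguely continuous and injective on pure point measures with at least two atoms, and its inverse is vaguely continuous on its image by a routine Radon-measure argument (a test function of compact support on $\R$ can be lifted, modulo the irrelevant diagonal, to one of compact support on $\R^{(2)}$). I expect no serious obstacle, consistent with the paper's claim that the corollary follows ``easily''.
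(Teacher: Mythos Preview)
Your proposal is correct and matches the paper's intent: the paper gives no separate proof of the corollary, merely stating that it ``easily follows'' from Proposition~\ref{partial_energies} with $j=K$, and your route via the continuous mapping theorem is a natural way to cash this out. The only point deserving a word of care is the continuity of $\Phi=\psi^{-1}\circ(\text{forget marks})$: the map $\mu\mapsto\mu^{(2)}$ is not invertible on configurations with at most one atom in a given compact (since $\binom{0}{2}=\binom{1}{2}=0$), so the inverse is only continuous at configurations which place at least two atoms in every sufficiently large compact interval. The limiting Derrida--Ruelle cascade has this property almost surely (its atoms are unbounded below), which is exactly what you need; once this is observed, the recovery of $\mu|_{[a,b]}$ from $\mu^{(2)}|_{[a,b]^{(2)}}$ is a finite-dimensional argument.
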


\begin{rem}
The constants $\mathcal C_\cdot$ which appear in Property {\it iv)} encode a subtle optimal strategy for the energy/entropy competition in the presence of critical subsets, which loosely goes as follows:
it turns out that a configuration $\s\in \Sigma_{N, A_j}$ is relevant as long as \hbox{\small{$\sum_{k=1}^j X_{\s(1),\dots, \s(k)}\approx \sum_{k=1}^j a_{N,k}$}} (in sub-logarithmic order). Typically, this feat is achieved by simply having all the partial components of the sum to be at their optimal value, 
$X_{\s(1),\dots, \s(k)} \approx a_{N,k}$. It however turns out that in the presence of a critical subset $A$ at the level $k$, say, this is not enough: the optimal strategy has to be 
refined by lowering the sub-energies at the level of the critical subsets, \hbox{\small{$\sum_{J \in \mathcal P_{A\cup A_{k-1}}\setminus \mathcal P_{A_{k-1}}} X_{\s_J}^J \approx a_{N,k} - O(\sqrt{N})$}}, and have the
complement to make up for the energy loss, i.e. \hbox{\small{$\sum_{J \in \mathcal P_{A_{k-1}}\setminus  (\mathcal P_{A\cup A_{k-1}}\setminus \mathcal P_{A_{k-1}})} X_{\s_J}^J  \approx a_{N,k} + O(\sqrt{N})$}}. In other words, one additionally has to require (by a truncation procedure) that
\[
\sum_{J \in \mathcal P_{A\cup A_{k-1}}\setminus \mathcal P_{A_{k-1}}} X_{\s_J}^J - \sum_{J \in \mathcal P_{A_{k-1}}\setminus  (\mathcal P_{A\cup A_{k-1}}\setminus \mathcal P_{A_{k-1}})} X_{\s_J}^J  = -O(\sqrt{N}). 
\]
(In the presence of multiple criticalities, the above must then be required for each one of the critical subsets.) It is also interesting to observe that these constants, in a sense the only witnesses of the original "graph structure", do not enter into the law of the Gibbs measure, as they drop out after the normalization. 
\end{rem}
\begin{rem}
There is also an interesting interpretation of the critical constants $\mathcal C_{\cdot}$ in case of a GREM. To see this, consider on an additional probability
space $(\tilde \Omega, \tilde{ \cal F}, \tilde \PP)$ a Brownian Bridge $(\mathcal B(t), 0\leq t \leq 1)$, starting and ending in $0$. The a priori hierarchical structure of the
GREM is reflected in the nestling of the critical subset, $A^{crit}_1 \subsetneq A^{crit}_{2}, \dots, A_{j}^{crit} \subsetneq
A_{l} \setminus A_{l-1}$. Defining the "times" $s_r = \widehat \a_l(A^{crit}_{r})$, for $r = 1, \dots, j$  one can show that the critical
constants are given by \hbox{\small{${\mathcal C}_l = \tilde \PP\Big[ {\mathcal B}(s_1) \leq 0, \dots, {\mathcal B}(s_j) \leq 0\Big]$}}.
This is by no means fortuitous; there is in fact a strong link between the issues addressed in this work and those related to precise second-order
corrections of the maximal displacement of branching brownian motion \cite{bramson}. Contrary to the GREM, there is no "Brownian bridge representation" of the critical constants for genuinely non hierarchical hamiltonians.  
\end{rem}
Coming back to the Gibbs measure, we observe that its distribution is invariant under 'shifts by constants' of the energies; for instance, in the case $\be>\be_K$ we will think
of the Gibbs measure as
\[ 
\mathcal G_{\be, N}(\s) = {\exp[\be X_\s]\over Z_N(\be)} = {\exp[\be (X_\s - a_N) ]\over \sum_{\tau \in \Sigma_N} \exp[\be (X_\tau - a_N) ]} = {\exp[\be \widehat X_{\s(1), \dots, \s(K)} ]\over 
\sum_{\tau \in \Sigma_N} \exp[\be \widehat X_{\tau(1), \dots, \tau(K)}]}
\]
with $a_N \defi \sum_{j\leq K} a_{N,j}$. Under the light of this representation, together with Proposition \ref{partial_energies} (with $j=K$), it should be clear that an important step in the proof 
of the Main Theorem (part $\it a$) will be to check that the normalization  procedure commutes with the $N\to \infty$ limit. (Whereas the claim {\it b} of the Main Theorem will require some analogous reformulation of the marginal of the Gibbs measure).

\section{The energy levels} \label{energy_levels}
\subsection{Localization of the energy levels} \label{proof_existence}
The following estimates are evident: 
\beq \label{evident}
\frac{a_{N,j}}{\D_j N} = \be_j + O(N^{-1}\log N), \quad \exp\Big[- \frac{a_{N,j}^2}{2\D_j N} \Big]
= 2^{- G_j N} \be_j \sqrt{2\pi \D_j N} \big[1+o(1)\big].
\eeq
The next Lemma relates to exponentials of gaussian random variables. Let \hbox{\small{$B>
\be_j$}} and \hbox{\small{$\,B_N \defi B+\epsilon_N$}}, for some $\epsilon_N \to 0$. 
\begin{lem} \label{control}
For any sequence of reals $\phi_1, \dots, \phi_j$ there exists "const" depending on the underlying parameters 
only (not yet on $\phi's$) such that for $N$ large enough
\beq \bea 
&\E\Bigg[\exp\Big( B_N\widehat Y_j\Big);\; \widehat Y_1 \leq \phi_1, \widehat Y_2 \leq \phi_2, \dots,
\widehat Y_j  \leq \phi_j \Bigg] \\
&\hspace{6cm} \lesssim 2^{-\g(A_j) N}\exp\Bigg\{\sum_{l=1}^{j-1}(\be_{l+1} -\be_l)\phi_l
+(B-\be_j) \phi_j\Bigg\}. 
\eea \eeq
\end{lem}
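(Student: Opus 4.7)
The plan is induction on $j$, with the heart of the argument a single-step Gaussian tail estimate
\[
(\star)\qquad \E\big[\exp(B'_N \overline Y_l);\, \overline Y_l \leq u\big] \lesssim 2^{-G_l N}\exp\big((B'-\be_l) u\big),
\]
to be proved for any $l\in\{1,\dots,K\}$, $B' > \be_l$, $B'_N = B' + \e'_N$ with $\e'_N\to 0$, uniformly in $u \in \R$ and for $N$ large enough. This estimate uses only the independence of the $Y_J$'s, the definition $\overline Y_l = \sqrt N Y_l - a_{N,l}$ with $Y_l \sim \mathcal N(0, \D_l)$, and the two identities in \eqref{evident}.

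For $(\star)$, I would complete the square in the Gaussian moment generating function to write
\[
\E[\exp(B'_N\overline Y_l);\, \overline Y_l \leq u] = \exp\!\big(-B'_N a_{N,l} + \tfrac12 (B'_N)^2 \D_l N\big)\cdot \PP(\zeta \leq -s_N),
\]
where $\zeta \sim \mathcal N(0, \D_l)$ and $s_N \defi (B'_N \D_l N - a_{N,l} - u)/\sqrt N$. In the ``tight'' range $s_N > 0$ (i.e.\ $u \lesssim (B'-\be_l) \D_l N$), Mill's ratio gives $\PP(\zeta\leq -s_N) \lesssim s_N^{-1}\exp(-s_N^2/(2\D_l))$; expanding $-s_N^2/(2\D_l)$ will exactly cancel the $\tfrac12 (B'_N)^2 \D_l N$ and $-B'_N a_{N,l}$ contributions coming from the MGF prefactor, and the residue, after invoking both identities in \eqref{evident} (to rewrite $\exp(-a_{N,l}^2/(2\D_l N))$ as $2^{-G_l N}\be_l\sqrt{2\pi \D_l N}(1+o(1))$ and to expand the cross term $-u\,a_{N,l}/(\D_l N) = -\be_l u + o(1)$), will be precisely $2^{-G_l N}\exp((B' - \be_l) u)$, with the $\sqrt N$ from \eqref{evident} cancelling the $1/s_N \asymp 1/\sqrt N$ from Mill's ratio, and $\e'_N u$ absorbed into the constant for $N$ large. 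For $s_N \leq 0$, I would fall back on the trivial $\PP(\zeta \leq -s_N) \leq 1$, reducing to the unconditional MGF $\sim 2^{-G_l N}\exp(\tfrac12 (B'-\be_l)^2 \D_l N)\cdot \sqrt N$; since $u$ is then forced to satisfy $u \gtrsim (B'-\be_l) \D_l N$, one has $\exp((B'-\be_l) u) \gtrsim \exp((B'-\be_l)^2 \D_l N)$, which dominates the MGF exponentially, so $(\star)$ is trivially satisfied.

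The induction on $j$ in the lemma proper is then straightforward. For $j=1$, $(\star)$ with $l = 1$, $B' = B$, $u = \phi_1$ is the claim. For $j \geq 2$, I would condition on $\widehat Y_{j-1}$ and use the independence of $\overline Y_j$ from $(\widehat Y_1, \dots, \widehat Y_{j-1})$ to obtain
\[
\E[e^{B_N \widehat Y_j};\,\widehat Y_l \leq \phi_l,\; l \leq j] = \E\big[e^{B_N \widehat Y_{j-1}} \mathbf 1_{\{\widehat Y_l \leq \phi_l,\, l \leq j-1\}}\cdot F_N(\phi_j - \widehat Y_{j-1})\big],
\]
with $F_N(u) \defi \E[\exp(B_N \overline Y_j);\, \overline Y_j \leq u]$. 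Applying $(\star)$ (with $l = j$, $B' = B$) yields $F_N(u) \lesssim 2^{-G_j N}\exp((B - \be_j) u)$ uniformly. The factor $\exp(-(B - \be_j) \widehat Y_{j-1})$ combines with $e^{B_N \widehat Y_{j-1}}$ into $\exp((\be_j + \e_N) \widehat Y_{j-1})$, to which the induction hypothesis applies with $B' = \be_j > \be_{j-1}$ and the same $\e_N$; this contributes $2^{-\g(A_{j-1}) N}\exp\{\sum_{l=1}^{j-2}(\be_{l+1}-\be_l)\phi_l + (\be_j - \be_{j-1})\phi_{j-1}\}$, and multiplication by $2^{-G_j N}\exp((B-\be_j)\phi_j)$ gives the claim, using $\g(A_{j-1}) + G_j = \g(A_j)$.

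The main obstacle is the algebraic cancellation inside $(\star)$: one must verify that the $\exp(\tfrac12 (B'_N)^2 \D_l N)$ piece of the moment generating function is exactly matched by the corresponding piece of Mill's ratio, and that the polynomial prefactors ($\sqrt N$ from \eqref{evident} and $1/s_N$ from Mill's ratio) combine into a multiplicative constant. This cancellation is not accidental: the fine-tuned correction $-\tfrac{1}{2\be_l}\log N + \tfrac{1}{\be_l}\log \be_l\sqrt{2\pi \widehat \a_l}$ in the definition of $a_{N,l}$ is engineered so that the two identities of \eqref{evident} produce exactly the right polynomial scaling. The secondary subtlety is uniformity in $u$---needed because $u = \phi_j - \widehat Y_{j-1}$ is random in the induction step, with no lower bound on $\widehat Y_{j-1}$---which is precisely the reason the ``$s_N \leq 0$'' regime has to be handled separately.
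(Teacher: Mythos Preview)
Your overall plan—peel off $\overline Y_j$ by conditioning and iterate—is exactly the paper's, but there is a genuine gap in the single-step estimate $(\star)$ as you state it. You claim
\[
\E[\exp(B'_N\overline Y_l);\,\overline Y_l\le u]\ \lesssim\ 2^{-G_lN}\exp\big((B'-\be_l)u\big)
\]
\emph{uniformly in $u$}, with the justification that ``$\e'_N u$ is absorbed into the constant for $N$ large''. This fails: in the induction $u=\phi_j-\widehat Y_{j-1}$ is unbounded above (nothing prevents $\widehat Y_{j-1}$ from being very negative), so the error $\e''_N u$, with $\e''_N=B'_N-a_{N,l}/(\D_lN)-(B'-\be_l)=\e'_N+O(N^{-1}\log N)$, is not $O(1)$ uniformly. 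The same issue hits your claim $1/s_N\asymp 1/\sqrt N$: that asymptotic is only valid for $u$ in a fixed compact, not for $u$ near $c_N=B'_N\D_lN-a_{N,l}$ where $s_N$ is small. Retaining the $-u^2/(2\D_lN)$ term from Mill's ratio helps, but only gives a uniform bound when $(\e''_N)^2N=O(1)$, i.e.\ $\e_N=O(N^{-1/2})$, which is stronger than the hypothesis $\e_N\to 0$.

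The paper sidesteps all of this with a much cheaper move: instead of completing the square and invoking Mill's ratio, it simply \emph{drops} the nonpositive term $-x^2/(2\D_jN)$ from the Gaussian exponent, leaving
\[
\int_{-\infty}^{u}\exp\!\Big[\Big(B_N-\tfrac{a_{N,j}}{\D_jN}\Big)x\Big]\frac{dx}{\sqrt{2\pi\D_jN}}
=\frac{\exp\!\big[(B_N-a_{N,j}/(\D_jN))u\big]}{(B_N-a_{N,j}/(\D_jN))\sqrt{2\pi\D_jN}},
\]
valid for \emph{every} $u$ once $B_N-a_{N,j}/(\D_jN)>0$. Multiplying by $\exp[-a_{N,j}^2/(2\D_jN)]$ and using \eqref{evident} gives the single-step bound with the \emph{exact} coefficient $B_N-a_{N,j}/(\D_jN)$ in the exponent, not the simplified $B-\be_j$. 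One then iterates, replacing $B_N$ by $a_{N,j}/(\D_jN)=\be_j+\tilde\e_N$ at the next level; the simplification to $(\be_{l+1}-\be_l)\phi_l$ and $(B-\be_j)\phi_j$ happens only at the very end, where the $\phi$'s are fixed and the $o(1)$ coefficients are harmless. The fix to your argument is therefore simple: state $(\star)$ with coefficient $B'_N-a_{N,l}/(\D_lN)$ rather than $B'-\be_l$, and carry this exact coefficient through the induction.
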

\begin{proof} 
Let $\E_{\overline Y_j}$ stand for expectation w.r.t. $\overline Y_j$. Then
\small{
\beq \bea \label{control_one}
&\E\Bigg[\exp\Big( B_N\widehat Y_j\Big);\; \widehat Y_1 \leq \phi_1, \widehat Y_2 \leq \phi_2, \dots,
\widehat Y_j  \leq \phi_j \Bigg] =\\ 
& = \E\Bigg[ \exp\Big(B_N \widehat Y_{j-1} \Big)\E_{\overline Y_j} \Big[ \exp\big(B_N \overline Y_j \big);\; \widehat
Y_{j-1} + \overline Y_j \leq \phi_j \Big] ;\; \widehat Y_1\leq \phi_1,\dots, \widehat Y_{j-1} \leq
\phi_{j-1}\Bigg]. 
\eea \eeq }
But
\small{
\beq \bea  \label{control_two}
&\E_{\overline Y_{j}}\Big[ \exp\big(B_N \overline Y_j \big);\; \widehat
Y_{j-1} + \overline Y_j \leq \phi_j\Big] = \int_{-\infty}^{\phi_j - \widehat Y_{j-1}} \exp\Bigg[ B_N x 
- {\big(x+a_{N,j}\big)^2\over {2\D_jN}} \Bigg] {dx\over \sqrt{2\pi \Delta_j N}} \\
&\qquad \leq \exp\Bigg[-{a_N,j^2\over 2\Delta_j N}\Bigg] \times \int_{-\infty}^{\phi_j-\widehat Y_{j-1}} \exp\Bigg[\Big(B_N -
{a_{N,j}\over \D_j N}\Big) x\Bigg] {dx \over \sqrt{2\pi \Delta_j N}}. 
\eea \eeq }
Observe that, for $N$ large enough, \hbox{\small{$ B_N -{a_{N,j}\over N\D_j}$}} is strictly positive (it converges to
\hbox{\small{$B-\be_j$}}), whence the existence of the last integral above, which together with the bounds \eqref{evident} leads to
\beq \bea \label{control_three}
\eqref{control_two} \lesssim 2^{-G_j N}\exp\Bigg[ \Big(B_N-{a_{N,j}\over
\D_j N}\Big)(\phi_j -\widehat Y_{j-1})\Bigg]. 
\eea \eeq
Plugging \eqref{control_three} into \eqref{control_one} and iterating the procedure with $B_N$ replaced by \hbox{\small{${a_{N,j}\over
N\D_j} = \be_j + \tilde \epsilon_N$}} (with some new $\tilde \epsilon_N\to 0$) yields the claim.
\end{proof}

For arbitrary $R>0$, let us write $\Sigma_{N,A_j}^R$ for the (random) subset of $\Sigma_{N,A_j}$ such that
 $\overline X_{\tau(1),\dots, \tau(l)} \in [-R, R]$ for every $l\leq j.$ 
\begin{prop}\label{existence_lemma}
Let $\Diamond\subset \R$ be a compact set. Then, to $\vare>0$, we may find large enough $R>0$ such that, 
for large enough $N$, 
\beq \label{localized_r}
\PP\Big[\exists \tau \in \Sigma_{N, A_j}\setminus \Sigma_{N,A_j}^R: \widehat X_{\s(1), \dots, \s(j)}\in \Diamond \Big] \leq \varepsilon,
\eeq
\end{prop}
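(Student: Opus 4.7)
The plan is a first-moment argument combined with the exponential-moment estimate of Lemma~\ref{control}, applied to the partial sums $\widehat X_{\tau(1),\dots,\tau(l)}$ rather than to the individual increments. Since $\overline X_{\tau(1),\dots,\tau(l)} = \widehat X_{\tau(1),\dots,\tau(l)} - \widehat X_{\tau(1),\dots,\tau(l-1)}$ (with the convention $\widehat X_{\tau(1),\dots,\tau(0)} := 0$), any $\tau \notin \Sigma_{N,A_j}^R$ must satisfy $|\widehat X_{\tau(1),\dots,\tau(l)}| > R/2$ for some $l\le j$. A union bound over $l$ and the sign of the violation therefore reduces the task to controlling, for each $l\le j$ and each sign $\pm$,
\[
\PP\bigl[\exists \tau \in \Sigma_{N,A_j}: \pm\widehat X_{\tau(1),\dots,\tau(l)} > R/2,\ \widehat X_{\tau(1),\dots,\tau(j)} \in \Diamond\bigr].
\]

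For the lower-tail event $\widehat X_{\tau(1),\dots,\tau(l)} < -R/2$, the estimate follows \emph{directly} from Lemma~\ref{control}. After introducing a Markov weight $e^{B(\widehat X_{\tau(1),\dots,\tau(j)}-\min\Diamond)}$ with $B>\be_j$, the first-moment bound on the bad count, further restricted to the auxiliary event $\widehat X_{\tau(1),\dots,\tau(k)} \leq \Phi$ for $k \neq l$, fits Lemma~\ref{control} with $\phi_l = -R/2$ and $\phi_k = \Phi$ for $k \neq l$. The negative sign of $\phi_l$ enters the right-hand side through the term $(\be_{l+1}-\be_l)\phi_l$ (or $(B-\be_j)\phi_j$ when $l=j$), giving an overall factor $e^{-cR}$ with $c>0$ once the entropy $2^{\g(A_j)N}$ from the sum over $\tau$ cancels the $2^{-\g(A_j)N}$ in Lemma~\ref{control}.

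For the upper-tail event $\widehat X_{\tau(1),\dots,\tau(l)} > R/2$, Lemma~\ref{control} does not apply directly since it only accommodates upper constraints on partial sums. The plan is to introduce an additional Markov weight $e^{B'(\widehat X_{\tau(1),\dots,\tau(l)}-R/2)}$ with $B'>0$ and to estimate the resulting joint exponential moment
\[
\sum_\tau \E\!\left[\exp\bigl(B\,\widehat X_{\tau(1),\dots,\tau(j)} + B'\,\widehat X_{\tau(1),\dots,\tau(l)}\bigr);\ \widehat X_{\tau(1),\dots,\tau(k)} \leq \Phi\ \forall k\right]
\]
by the same telescoping used to prove Lemma~\ref{control}. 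The extra weight is inert at the iteration steps $k>l$; at the step $k=l$ the inner Gaussian integral is carried out with effective parameter $\be_{l+1}+B'$ in place of $\be_{l+1}$; and at the steps $k<l$ the $B'$ contribution is absorbed by the algebra, so the recursion proceeds identically to the unweighted case. The net effect is a single extra factor $e^{B'\phi_l}=e^{B'\Phi}$ on the right-hand side of Lemma~\ref{control}. After summing over $\tau$ and multiplying by the Markov factor $e^{-B\min\Diamond-B'R/2}$, one obtains a bound of order $e^{-B'R/2}$ times a constant depending only on $\Phi$, $B$, and $B'$, which tends to zero as $R\to\infty$.

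The auxiliary truncation $\widehat X_{\tau(1),\dots,\tau(k)} \leq \Phi$ used in both steps above is removed by the same upper-tail argument applied at level $k$ with $R/2$ replaced by $\Phi$; choosing $\Phi$ sufficiently large renders this contribution negligible uniformly in $R$. The \textbf{main obstacle} is the upper-tail estimate: one must verify that inserting an exponential weight at an intermediate step of the telescoping in Lemma~\ref{control} preserves the leading order $2^{-\g(A_j)N}$ responsible for the entropy cancellation, a property resting on the chain calibration $\be_k^2\D_k = 2G_k\log 2$ at every step. Once this is in place, assembling the $2j$ contributions of the union bound and taking $R$ large (after $\Phi$) concludes the proof.
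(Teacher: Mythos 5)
Your lower--tail half is essentially the paper's argument: restrict to the event that all partial sums are bounded above, apply Lemma \ref{control} with $\phi_l=-R/2$ and the other $\phi_k$ equal to a constant, and collect the factor $e^{-(\be_{l+1}-\be_l)R/2}$. The genuine gap is in your upper--tail half. There you keep the localization $\widehat X_{\tau(1),\dots,\tau(j)}\in\Diamond$ and take a union bound over \emph{full} configurations $\tau\in\Sigma_{N,A_j}$, bounding the indicator of $\widehat X_{\tau(1),\dots,\tau(l)}>R/2$ by a Chernoff weight $e^{B'(\widehat X_{\tau(1),\dots,\tau(l)}-R/2)}$. This cannot work, because the quantity you are trying to bound is genuinely large: an excursion of the partial sum to height $y$ at level $l$ has probability cost of order $e^{-\be_l y}$, but it is compensated by of order $e^{\be_{l+1}y}$ continuations whose remaining increments bring the sum back into the compact window $\Diamond$, so the expected number of pairs (excursion, return) grows like $e^{(\be_{l+1}-\be_l)y}$ even though the probability that any excursion exists decays like $e^{-\be_l y}$. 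Concretely, in your modified telescoping the tilt inherited at level $l$ from the levels above is $\be_{l+1}(+B')>\be_l$; if you impose no upper constraint at level $l$ (your truncation excludes $k=l$), the level-$l$ Gaussian integral is unconstrained and produces a factor $\exp\big[(\be_{l+1}+B'-\be_l)^2\D_l N/2\big]$, which destroys the entropy cancellation as $N\to\infty$; if instead you do cap level $l$ at some $\phi_l$, then $\phi_l\geq R/2$ is forced for the event to be nonempty, and the resulting factor $e^{(\be_{l+1}+B'-\be_l)\phi_l}$ cancels your Markov gain $e^{-B'R/2}$ and leaves a bound growing like $e^{(\be_{l+1}-\be_l)R/2}$. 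So the ``main obstacle'' is not preserving the factor $2^{-\g(A_j)N}$; it is that a first-moment bound over full configurations is false for the upper tail.

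The paper circumvents this by decoupling the upper tail from the $\Diamond$-localization: it proves by induction on the level that with high probability \emph{no} partial sum $\widehat X_{\tau(1),\dots,\tau(l)}$, $\tau\in\Sigma_{N,A_l}$, exceeds a constant $C$ at \emph{any} level -- a union bound over the partial configuration space $\Sigma_{N,A_l}$ only, where the expected number of exceedances is $\lesssim e^{-\be_l C}$ because the descendant entropy is absent -- and only then treats the lower tail jointly with the constraint $\widehat X_{\tau(1),\dots,\tau(j)}\in\Diamond$ on the event that all partial sums are $\leq C$. Your plan to remove the auxiliary truncation ``by the same upper-tail argument with $R/2$ replaced by $\Phi$'' inherits the same defect and is moreover circular as stated (that argument itself presupposes the truncation at the other levels); the paper's induction over levels is exactly what replaces it. With your upper-tail step replaced by the paper's inductive sup-bound on partial sums, the rest of your outline goes through.
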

\begin{proof}
The proof comes in different steps. \\

We first claim that to $\e>0$ there exists $C$ such that 
\beq \label{step_one}
\PP\Big[\exists \tau \in \Sigma_{N, A_j}:\, \widehat X_{\tau(1),\dots, \tau(l)} \geq
C\; \text{for some}\ l\leq j\Big] \leq \epsilon.
\eeq
To see this, we will proceed by induction: suppose that there exists $\widehat C$ such that 
\[\PP\Big[ \forall \tau \in \Sigma_{N,A_l}:\; \widehat X_{\tau(1),\dots, \tau(l)} \leq \widehat C,\; \forall\; l\leq j-1\Big] \geq 1-
\epsilon/2\] for $N$ large enough. For any $\widetilde C > 0$ we thus have
\small{
\beq \bea \label{existence_inizio}
&\PP\Big[ \exists \tau \in \Sigma_{N, A_j}: \widehat X_{\tau(1),\dots, \tau(j)} \geq \widetilde C \Big] \leq {\epsilon \over 2} + \\
& \hspace{2cm} + \PP\Big[\exists \tau \in \Sigma_{N,A_j}:\; \widehat X_{\s(1),\dots, \s(j)} \geq \widetilde C\; \text{and}\; \forall l\leq (j-1) \;\;\widehat
X_{\tau(1),\dots, \tau(l)} \leq \widehat C\Big], 
\eea \eeq}
and the second term on the r.h.s above is bounded by
\small{
\beq \bea \label{existencetwo}
& \sum_{\tau\in \Sigma_{N,A_j}} \PP\Big[ \widehat X_{\tau(1)} \leq \widehat C,\dots, \widehat X_{\tau(1),\dots, \tau(j-1)} \leq \widehat C, \widehat X_{\tau(1),\dots, \tau(j)} \geq \widetilde C\Big]\\
&\quad = 2^{\g(A_j)N}\PP\Big[\widehat Y_1 \leq \widehat C, \dots, \widehat Y_{j-1} \leq \widehat C, \overline Y_j \geq \widetilde C- \widehat Y_{j-1} \Big]\\
&\quad= 2^{\g(A_j)N}\E\Bigg[\int_{\widetilde C- \widehat Y_{j-1}}^{\infty} \exp\Big[ -
{(x+a_{N,j})^2 \over 2\D_j N}\Big] {dx\over \sqrt{2\pi \D_j N}};\; \widehat Y_1 \leq \widehat C, \dots, \widehat Y_{j-1}\leq \widetilde
C\Bigg] \\
&\quad \lesssim 2^{\g(A_j)N}\E\Bigg[ \exp\Big[- {a_{N,j}^2\over \D_j
N} - {a_{N,j}\over 2\D_j N}\big( \widetilde C- \widehat Y_{j-1}\big) +o(1)\Big]; \widehat Y_1 \leq \widehat C, \dots, \widehat Y_{j-1}\leq
\widetilde C\Bigg]\\
&\quad \stackrel{\text{Lemma \ref{control}}}{\lesssim}   \exp\Bigg[ \sum_{l=1}^{j-1} (\be_{l+1}-\be_l) \widehat C - \be_j
\widetilde C\Bigg].
\eea \eeq}
It thus suffices to choose $\widetilde C$ large enough in the positive to make the above less then $\epsilon/2$.  Setting $C\defi
\max\{\widetilde C, \widehat C\}$ yields \eqref{step_one}.\\

We next claim that to $\e>0$ there exists $\widehat R>0$ such that 
\beq\label{step_two}
\PP\Big[\exists \tau \in \Sigma_{N, A_j}: \widehat X_{\tau(1),\dots, \tau(j)} \in \Diamond,\,
\widehat X_{\tau(1),\dots, \tau(l)} \notin [-\widehat R, \widehat R]\,\text{for some}\; l\leq j \Big] \leq \epsilon.
\eeq
Since $\widehat X_{\s(1), \dots, \s(k)}= \overline X_{\s(1),\dots, \s(k)} - \overline X_{\s(1),\dots, \s(k-1)}$ (for $k=2,\dots, j$), \eqref{step_two} would immediately imply \eqref{localized_r}. 

To see \eqref{step_two}, let \hbox{\small{$\widetilde C>0$}} and \hbox{\small{$x_{\Diamond} \defi \sup \{x \in \Diamond \}$}}. By \eqref{step_one} we can find $C>0$ such
that for large enough $N$
\beq \label{existence_three}
\PP\big[\forall \tau \in \Sigma_{N,A_j}: \widehat X_{\tau(1),\dots, \tau(j)} \leq C \,\text{for all}\, l\leq j \big] \geq
1-\epsilon/2.
\eeq 
and therefore
\beq \bea \label{existence_four}
&\PP\Big[\exists \tau \in \Sigma_{N, A_j}:\; \widehat X_{\tau(1),\dots, \tau(j)} \in {\Diamond}, \; \widehat X_{\tau(1),\dots, \tau(l)}
\leq -\widetilde C\; \text{for some}\, l\leq j\Big] \\
&\qquad \leq \epsilon/4 + \PP\Big[\exists \tau \in \Sigma_{N, A_j}:\; \widehat X_{\tau(1),\dots, \tau(j)} \in {\Diamond}, \; \widehat X_{\tau(1),\dots, \tau(l)}
\leq -\widetilde C\\
&\hspace{7cm}\text{for some}\;l\leq j,\quad \widehat X_{\tau(1),\dots, \tau(r)} \leq C \;\forall r\leq j\Big] \\
&\qquad \leq \epsilon/2 + const \times \sum_{l\leq j} \exp\Bigg[ \sum_{k\neq l} (\be_{k+1}-\be_k)\max(C,x_{\Diamond}) -
(\be_{l+1}-\be_l)\widetilde C\Bigg].
\eea \eeq
(the steps behind the last inequality following verbatim those in \eqref{existencetwo}). It thus suffices to choose $\widetilde C$ large enough
in the positive to make \eqref{existence_four} smaller then \hbox{\small{$\epsilon/2$}}, which together with \eqref{existence_three} yields the claim of
\eqref{step_two} with \hbox{\small{$\widehat R =\max(C, \widetilde C)$}}. The Proposition then follows. \\

\end{proof}

We now introduce an important {\it thinning procedure} (the meaning of this wording will become clear below): for $\vare_1>0, \,k=1,\dots, j$ and critical subset $A\subsetneq A_{k}\setminus A_{k-1}$ we say that  ${\bf T}_1(\s,k, A, \varepsilon_1)$ holds if 
\small{\[{1\over \widehat \a_k(A)}\sum_{J\in \widehat {\cal P}_{A, k}} X_{\s_J}^{J} - {1\over \widehat \a_k^c (A)}\sum_{J\in \widehat {\cal P}_{A,k}^c } X_{\s_J}^{J} \leq -
\varepsilon_1 \sqrt{N}.\]} 
We say that ${\bf T_1}(\vare_1)$ holds, tacitly understanding that it holds for all critical subsets. 

\begin{rem} \label{complement_also_critical}
${\bf T}_1$ makes sense only provided the first irreducibility Condition $\bf {\bf c}$ is satisfied, which also guarantees that $\mathcal C_\cdot >0$. In fact, for critical $A\subsetneq A_{l}\setminus A_{l-1}$, by simple
properties of real numbers we also have   
\[ 
\Big[\g(A_l) - \g(A\cup A_{l-1})\Big]\Big/ \widehat \a_l^c(A) = \be_j^2\big/ (2\log 2). 
\]
But by Condition {\bf {\bf c}} there exists $J\in {\cal P}_{A_l} \setminus {\cal P}_{A\cup A_{l-1}}$ with
$J \cap A \neq \emptyset$, in which case $\widehat \a_l^c(A) >\widehat\a_l\big(A_l \setminus (A\cup A_{l-1})\big)$. 
This implies that the relative complement $A_l \setminus (A \cup A_{l-1})$ cannot be critical,  
\[ 
\Big[\g(A_l)- \g(A\cup A_{l-1})\Big] \Big/ \widehat \a_l\Big(A_l \setminus (A\cup A_{l-1})\Big) >
\be_j^2 \big/ 2\log 2. 
\]
To further clarify, consider the example $X_{\s} = X_{\s_1}^{\{1\}}+X_{\s_2}^{\{2\}}$ with parameters $a_1 = a_2 = \g_1 = \g_2 = 1/2$. The associated chain is then
\hbox{\small{${\bf T} = \{A_o = \emptyset,A_1 = \{1,2\}\}$}} and both subsets $\{1\}, \{2\}$ are critical. Evidently, Condition {\bf {\bf c}} is not satisfied. The truncation
${\bf T}_1$ is (to given $\vare$) meaningless since it is fulfilled by those $\s\in  \Sigma_N$ such that  \hbox{\small{$X_{\s_1}^{\{1\}} - X_{\s_2}^{\{2\}} \leq - \vare \sqrt{N}$}} and simultaneously
\hbox{\small{$X_{\s_2}^{\{2\}} - X_{\s_1}^{\{1\}}\leq - \vare \sqrt{N}$}}: there is no such configuration. 
\end{rem}

For technical reasons, we introduce yet another thinning procedure: for $\vare_2>0, k=1,\dots,
j$ and (critical and non critical) subsets $A\subsetneq A_{k}\setminus A_{k-1}$ such that $\widehat \a_k(A)>0$, we say that ${\bf T}_2(\s, k, A, \vare_2)$ holds if 
\[\sum_{J\in \widehat {\cal P}_{A,k}} X_{\s_J}^J \leq \be_k \widehat \a_k(A) (1+ \vare_2) N.\] 
Again, ${\bf T}_2(\vare_2)$ holds, if it holds for all possible subsets.\\

To given $R>0$ we denote by $\Sigma_{N,A_j}^{R, \vare_1, \vare_2}$ the (random) subset of $\Sigma_{N,A_j}^R$ consisting of those configurations which satisfy $\boldsymbol T_1$ and $\boldsymbol T_2$.

\begin{prop} \label{random_configuration_space} Let $R, \vare_2>0$. Then,  
$
\lim_{\vare_1 \downarrow 0} \lim_{N\uparrow\infty} \PP\Big[\Sigma_{N,A_j}^{R} \setminus \Sigma_{N,A_j}^{R, \vare_1,\vare_2} \neq
\emptyset \Big] =0$.
\end{prop}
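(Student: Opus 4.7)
By Markov's inequality, $\PP\bigl[\Sigma_{N,A_j}^R\setminus \Sigma_{N,A_j}^{R,\vare_1,\vare_2}\neq \emptyset\bigr]\leq \E\bigl[|\Sigma_{N,A_j}^R\setminus \Sigma_{N,A_j}^{R,\vare_1,\vare_2}|\bigr]$, and by a union bound over the level $k\leq j$, over $A\subsetneq A_k\setminus A_{k-1}$, and over $\sigma\in\Sigma_{N,A_j}$, it suffices to bound, for each fixed $(\sigma,k,A)$, the joint probability that $\sigma\in \Sigma^R_{N,A_j}$ and the relevant truncation $\boldsymbol T_1$ or $\boldsymbol T_2$ fails, and then to sum these probabilities over $\sigma$. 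The proof decouples along these two truncation types.

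For $\boldsymbol T_2$-violations: the failure forces the Gaussian sum $\sum_{J\in \widehat{\mathcal P}_{A,k}}X_{\sigma_J}^J$ to overshoot its conditional mean (given $\sigma\in \Sigma^R_{N,A_j}$) by $\vare_2\beta_k\widehat\alpha_k(A)N$, a deviation of order $\sqrt{N}$ in standard-deviation units. A Lagrangian-type Gaussian computation identical in spirit to \eqref{existencetwo} in the proof of Proposition \ref{existence_lemma} (and to Lemma \ref{control}) yields a per-configuration bound of order $2^{-\gamma(A_j)N}\exp[-c\vare_2^2 N]$, for some $c=c(\vare_2)>0$ depending on the underlying parameters. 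Summing over the $2^{\gamma(A_j)N}$ configurations gives the uniform bound $\exp[-c\vare_2^2 N]\to 0$ as $N\to\infty$ for every fixed $\vare_2>0$.

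For $\boldsymbol T_1$-violations (which are only defined for critical $A$), the exceedance is of order $\vare_1$ only, so the estimate must be done sharply. I would introduce at level $k$ the pair
\[p=\sum_{J\in \mathcal P_{A_k}\setminus\mathcal P_{A_{k-1}}}X_{\sigma_J}^J,\qquad q=\f{1}{\widehat\alpha_k(A)}\sum_{J\in \widehat{\mathcal P}_{A,k}}X_{\sigma_J}^J-\f{1}{\widehat\alpha_k^c(A)}\sum_{J\in \widehat{\mathcal P}_{A,k}^c}X_{\sigma_J}^J,\]
for which a direct variance computation gives $\operatorname{Cov}(p,q)=N-N=0$, making $p,q$ independent Gaussians with variances $\Delta_k N$ and $\Delta_k N/(\widehat\alpha_k(A)\widehat\alpha_k^c(A))$ respectively (here irreducibility Condition $\boldsymbol c$ and Remark \ref{complement_also_critical} enter, ensuring $\widehat\alpha_k^c(A)>0$ so the decomposition is meaningful). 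The event $\{\sigma\in \Sigma^R_{N,A_j}\}\cap\{\boldsymbol T_1\text{ fails}\}$ then factorizes into the $p$-constraint (already accounted for in the computation of $\E[|\Sigma^R_{N,A_j}|]$) and the independent $q$-constraint $\{q>-\vare_1\sqrt{N}\}$. Using the criticality identity $\beta_k^2\widehat\alpha_k(A)=2(\gamma(A\cup A_{k-1})-\gamma(A_{k-1}))\log 2$ and its complement analogue to balance the Gaussian density of $p$ against the combinatorial factor $2^{\gamma(A_j)N}$, the first-moment bound reduces to a $\vare_1$-dependent function that vanishes in the iterated limit $\lim_{\vare_1\downarrow 0}\lim_{N\to\infty}$.

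The hard part is clearly the second step: the per-configuration gain is only linear in $\vare_1$, not exponentially small in $N$ as for $\boldsymbol T_2$, so the Gaussian first-moment computation must be pushed to leading order. The independence of $p$ and $q$, which is built into the very definition of $\tilde Y_k$ through the specific weights $1/\widehat\alpha_k(A)$ and $-1/\widehat\alpha_k^c(A)$, together with the critical tuning of $\beta_k$, is what makes the cancellation of the combinatorial factor $2^{\gamma(A_j)N}$ work and produces the vanishing $\vare_1$-dependent residual.
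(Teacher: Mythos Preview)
Your treatment of the $\boldsymbol T_2$--violations is fine and matches the paper's part~(III). The problem is in the $\boldsymbol T_1$ part: your factorisation via the independence of $p$ and $q$ is correct (and is in fact exploited later in the paper, in Lemma~\ref{miracle_constant}), but it does \emph{not} yield a vanishing first-moment bound. Indeed, $q/\sqrt N$ is a fixed centered Gaussian with variance $\Delta_k/(\widehat\alpha_k(A)\widehat\alpha_k^c(A))$, so
\[
\PP\bigl[q>-\vare_1\sqrt N\bigr]\;=\;\PP\bigl[q/\sqrt N>-\vare_1\bigr]\;\xrightarrow[\vare_1\downarrow 0]{}\;\tfrac12,
\]
not $O(\vare_1)$. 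Your first-moment bound therefore collapses to $\E\bigl[|\Sigma_{N,A_j}^R|\bigr]\cdot\PP[q>-\vare_1\sqrt N]\to C(R)/2$, which is bounded away from zero in the iterated limit. In other words, the expected number of $\boldsymbol T_1$--violators inside $\Sigma_{N,A_j}^R$ genuinely stays positive; a naked first-moment argument cannot close.

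The paper circumvents this by introducing an intermediate cut at level $N^\eta$ (for some $\eta\in(0,1/2)$) on the partial sum $S\defi\sum_{J\in\widehat{\mathcal P}_{A,k}}X_{\sigma_J}^J-a_{N,k}(A)$ and splitting into two regimes. Above the cut ($S>N^\eta$) one observes that the event depends only on $\sigma_{A\cup A_{k-1}}$, so the union bound is taken over the \emph{reduced} space $\Sigma_{N,A_{k-1}\cup A}$; criticality ($\beta_k^2\widehat\alpha_k(A)=2\gamma(A)\log 2$) exactly cancels the entropy $2^{\gamma(A)N}$ and leaves a bound $\exp[-const\cdot N^\eta]$. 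Below the cut ($S\le N^\eta$) one does take the union over the full $\Sigma_{N,A_j}$, but now the three simultaneous constraints $\overline Y_k\in\Diamond$, $q\ge-\vare_1$, $S\le N^\eta$ pin $S/\sqrt N$ to an interval of length $O(\vare_1)+N^{\eta-1/2}$, and this is what produces the extra factor $\vare_1$ (Lemma~\ref{quadratic_expansion}~c)). The $N^\eta$ cut is the missing idea in your proposal.
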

To prove this we need some additional facts.\\

For compact $\Diamond\subset \R$, we set  $p_N(j, \Diamond) \defi \PP\Big[ \overline Y_j \in \Diamond \Big]$. Let $\vare>0$ and $\eta \in (0, 1/2)$.  For 
critical $A\subsetneq A_{j} \setminus A_{j-1}$  we write
\[ 
p_N(j, \Diamond, A; \vare, \eta) \defi \PP\Big[\overline Y_j \in \Diamond, {Y_{j,A}\over \widehat \a_j(A)} - {Y_{j,A}^c\over \widehat \a_j^c(A)} \geq
-\vare, \sqrt{N} Y_{j,A} - a_{N,j}(A) \leq N^\eta \Big], 
\]
For non-critical $A\subsetneq A_{j}\setminus A_{j-1}$ such that $\widehat \a_j(A)>0$, 
\[
p_N^{>}(\Diamond, j, A, \vare) \defi \PP\Big[\overline Y_j \in \Diamond, Y_{j,A}> \be_j \widehat \a_j(A)(1+\vare) \sqrt{N} \Big]
\]
\begin{lem} \label{quadratic_expansion} For $N$ large enough:
\begin{itemize}
\item[a)] $p_N(j, \Diamond) = 2^{- G_j N}\int_{\Diamond}   \be_j \exp\big[-\be_j x + o(1) \big] dx,$
\item[b)] $p_N^{>}(\Diamond, j, A, \vare) \lesssim 2^{-G_j N}\exp\left[ - const \times \vare^2 N\right].$
\item[c)] $p_N(j, \Diamond, A; \vare, \eta) \lesssim 2^{- G_j N} \times \vare.$
\end{itemize}
\end{lem}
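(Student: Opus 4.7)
The plan is to get (a) by a direct expansion of the Gaussian density of $\overline Y_j$, and to reduce (b) and (c) to conditional tail/density estimates after splitting $Y_j=Y_{j,A}+Y_{j,A}^c$ into the sum of two independent centered Gaussians of variances $\widehat\a_j(A)$ and $\widehat\a_j^c(A)$. In all three parts the common factor $2^{-G_j N}$ arises from the same source, namely the REM-type balance expressed by \eqref{evident}.

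For (a), the density of $\overline Y_j$ is $(2\pi\Delta_j N)^{-1/2}\exp[-(x+a_{N,j})^2/(2\Delta_j N)]$. Expanding the exponent, the quadratic piece $x^2/(2\Delta_j N)$ is $O(1/N)$ on the compact set $\Diamond$; the constant piece $a_{N,j}^2/(2\Delta_j N)$ combines with the normalizing prefactor via \eqref{evident} to produce $2^{-G_j N}\be_j(1+o(1))$; and the linear piece becomes $-\be_j x+o(1)$ using $a_{N,j}/(\Delta_j N)=\be_j+O(N^{-1}\log N)$. Integration against $dx$ on $\Diamond$ then gives (a).

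For (b) and (c), I would set $s\defi\sqrt N Y_{j,A}$, $w\defi\sqrt N Y_j$ and condition on $w$: completing the square shows that, given $w$, the variable $s$ is Gaussian with mean $\mu_w\defi w\widehat\a_j(A)/\Delta_j$ and variance $\sigma^2\defi N\widehat\a_j(A)\widehat\a_j^c(A)/\Delta_j$. The event $\overline Y_j\in\Diamond$ reads $w\in a_{N,j}+\Diamond$, and the $w$-integration over this range contributes exactly the factor $2^{-G_j N}\int_\Diamond\be_j e^{-\be_j x+o(1)}dx$ from (a); what remains is the conditional probability of the extra constraints on $s$, uniformly in $x\in\Diamond$. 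For (b), the constraint $Y_{j,A}>\be_j\widehat\a_j(A)(1+\vare)\sqrt N$ becomes $s>\be_j\widehat\a_j(A)(1+\vare)N$, and since $\mu_w=\be_j\widehat\a_j(A)N+O(\log N)$ uniformly in $x\in\Diamond$, this is a deviation of order $\vare N$ from $\mu_w$, hence of order $\vare\sqrt N$ conditional standard deviations (as $\sigma$ is of order $\sqrt N$), and the standard Gaussian tail bound yields $\exp[-const\cdot\vare^2 N]$.

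For (c), the algebraic identity $s-\mu_w=(\widehat\a_j(A)\widehat\a_j^c(A)/\Delta_j)\sqrt N\,(Y_{j,A}/\widehat\a_j(A)-Y_{j,A}^c/\widehat\a_j^c(A))$ rewrites the first truncation as $s-\mu_w\geq-\vare\sqrt N\,\widehat\a_j(A)\widehat\a_j^c(A)/\Delta_j$, while a direct expansion of the explicit forms of $a_{N,j}(A)$ and $a_{N,j}$ gives $a_{N,j}(A)-\mu_w=-\widehat\a_j^c(A)/(2\be_j\Delta_j)\log N+O(1)$ uniformly in $x\in\Diamond$, so the $N^\eta$-truncation reads $s-\mu_w\leq N^\eta+O(\log N)$. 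Dividing by $\sigma$, the admissible interval has standardized length $\vare\cdot const+O(N^{\eta-1/2})$, which is $\lesssim\vare$ for $N$ large thanks to $\eta<1/2$, and bounding the conditional Gaussian density by its maximum gives the factor $\lesssim\vare$. The only real obstacle is this last standardization: one must see that the $N^\eta$ truncation is negligible on the $\sigma$-scale, which is precisely where the hypothesis $\eta<1/2$ is used.
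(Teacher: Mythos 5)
Your proposal is correct, and for (a) and (b) it amounts to the same elementary use of the asymptotics \eqref{evident} that the paper invokes (the paper simply declares (a), (b) to follow from \eqref{evident}). For (c) your organization is slightly different from the paper's: you condition $s=\sqrt N Y_{j,A}$ on $w=\sqrt N Y_j$, using that the conditional law is Gaussian with mean $\mu_w=w\widehat\a_j(A)/\D_j$ and variance $N\widehat\a_j(A)\widehat\a_j^c(A)/\D_j$, and you translate the criticality constraint via the identity $s-\mu_w=\frac{\widehat\a_j(A)\widehat\a_j^c(A)}{\D_j}\sqrt N\big(\frac{Y_{j,A}}{\widehat\a_j(A)}-\frac{Y_{j,A}^c}{\widehat\a_j^c(A)}\big)$; the paper instead writes out the double integral over the independent pair $(Y_{j,A},Y_{j,A}^c)$, observes that the inner integration set $\Diamond_x$ is nonempty only for $x\geq x_{\min}=-const\cdot\vare\sqrt N+O(\log N)$, and after completing the square reduces (c) to $\PP\big[Y_{j,A}\in(x_{\min}N^{-1/2},N^{\eta-1/2})+O(\log N/\sqrt N)\big]\lesssim\vare$. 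The two routes are equivalent Gaussian computations: both isolate the factor $2^{-G_jN}$ coming from $\overline Y_j\in\Diamond$ and reduce the remaining constraints to confining a Gaussian of standard deviation of order $\sqrt N$ to a window of length $O(\vare\sqrt N)+N^\eta$, where $\eta<1/2$ makes the $N^\eta$ part negligible; your conditional decomposition makes the uniformity in $x\in\Diamond$ and the role of $\eta<1/2$ particularly transparent, and your orthogonality observation ($s-\mu_w$ independent of $w$) is precisely the covariance computation the paper exploits later in Lemma \ref{miracle_constant}. One cosmetic remark: your bounds in (b) and (c) hold for $N\geq N_0(\vare,\eta)$, i.e.\ the threshold depends on $\vare$, but this matches both the paper's convention for $\lesssim$ and the way the lemma is applied (fixed $\vare_1,\vare_2$, then $N\to\infty$).
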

\begin{proof}  Claim $a)$ and $b)$ easily follow from the asymptotics \eqref{evident}. 
To prove c), first recall that $a_{N,j} = a_{N,j}(A) + \be_j \widehat \a_j^c(A) N + O(1)$ and therefore  
\small{ 
\beq \bea \label{starting_critical_truncated}
&p_{N}(j, \Diamond, A; \vare, \eta) \lesssim {1 \over \sqrt{N}}\int_{-\infty}^{N^\eta}
\exp\Big[- {\big(x+ a_{N,j}(A)\big)^2/ 2\widehat \a_j(A) N} \Big] {dx\over \sqrt{2\pi \widehat \a_j(A) N}} \times \\
& \hspace{6cm}\times \int_{\Diamond_x} \exp\Big[- {\big(y+ \be_j \widehat \a_j^c(A) N\big)^2/ 2\widehat \a_j^c(A) N} \Big] dy,\\
& \text{with} \; \Diamond_x \defi \Big\{\Diamond - x + O(1)\Big\}\cap \Big\{ y\in \R:\; {x\over \sqrt{N}\widehat \a_j(A)}
-{y\over \sqrt{N}\widehat \a_j^c(A)} \geq - \vare + O(\log N/\sqrt{N})\Big\}.
\eea \eeq}
Since $\Diamond$ is bounded, for the integration set $\Diamond_x$ not to be empty we must have  $x \geq x_{min} \defi - const \cdot \vare \cdot \sqrt{N} +  O(\log N)$, with
$const = \widehat \a_j(A) \widehat \a_j^c(A) \big/ \D_j$. Therefore: 
\small{ 
\beq \bea \label{bound_critical_eta_two}
\eqref{starting_critical_truncated} &\lesssim { 1\over \sqrt{N}} \exp\Big[- {\be_j^2 \over 2}\widehat \a_j^c(A)N \Big] \int_{\Diamond} \exp\big[-\be_j y \big] dy 
\times \\
& \hspace{3cm} \times \int_{x_{min}}^{N^\eta} \exp\big( \be_j x\big) \exp\Bigg[- {\big(x+ a_{N,j}(A)\big)^2\over 2\widehat \a_j(A)N} \Bigg] {dx
\over \sqrt{2\pi \widehat \a_j(A)N}} \\
&\lesssim { 1\over \sqrt{N}} \exp\Big[- {\be_j^2 \over 2}\widehat \a_j^c(A)N  + {\be_j^2\over 2} \widehat \a_j(A) N - a_{N,j}(A)\be_j \Big] \times \\
& \hspace{3cm} \times \int_{x_{min}}^{N^\eta} \exp\Bigg[-{\big(x+ a_{N,j}(A) - \be_j \widehat \a_j(A) N \big)^2\over 2\widehat \a_j(A)N} \Bigg] {dx
\over \sqrt{2\pi \widehat \a_j(A)N}}\\
& \lesssim 2^{-G_j N} \times \PP\Big[Y_{j, A} \in \Big(x_{min} N^{-1/2}, N^{\eta -1/2}\Big) + O(\log N/\sqrt N)\Big]
\eea \eeq }
the last step by simply noting that $a_{N,j}(A) - \be_j \widehat \a_j(A) N = O(\log N)$. Remark that
\[ 
\lim_{N\to \infty} \PP\Bigg[Y_{j, A} \in \Big(x_{min} N^{-1/2}, N^{\eta -1/2}\Big) + O(\log N/\sqrt N)\Bigg] = \int_{-const\cdot \vare}^0 \exp\big(-{x^2\over
2}\big) {dx\over \sqrt{2\pi}} \lesssim \vare.
\]
This settles claim c). 
\end{proof}

\noi {\bf Proof of Proposition \ref{random_configuration_space}}
Since $R$ is fixed throughout the proof, we abbreviate $\Diamond \defi [-R,R]$.
\small{\beq \bea
& \PP\Big[ \Sigma_{N,A_j}^R \setminus \Sigma_{N,A_j}^{R, \vare_1, \vare_2} \neq \emptyset \Big] \\
& \quad \leq  \PP\Bigg[ \exists \s\in \Sigma_{N,A_j}^R: \sum_{J\in \widehat{\cal P}_{A,k}} X_{\s_J}^J - a_{N,k}(A) \geq N^{\eta}\;\\
&\hspace{6cm} \text{for some critical} \; A\subsetneq A_k \setminus A_{k-1}, k =1,\dots, j\Bigg] + \\
& \qquad + \PP\Bigg[ \exists \s\in \Sigma_{N, A_j}^R, \; {\bf T}_1(\s, k, A, \vare_1)\; \text{does not hold for critical}\; A\subsetneq A_{k}\setminus A_{k-1}\\
& \hspace{5cm}  \text{for some}\; k = 1,\dots,j \; \text{but}\;
\sum_{J\in \widehat {\cal P}_{A,k}} X_{\s_J}^J - a_{N,k}(A) \leq N^{\eta}\Bigg] +  \\
&  \qquad + \PP\Bigg[ \exists \s\in \Sigma_{N,A_j}^R\; \text{such that}\; {\bf T_2}(\s,k, A, \vare_2) \; \text{does not hold for some}\\
&\hspace{6cm} \text{for some} \; A\subsetneq A_k \setminus A_{k-1}, k =1,\dots, j\Bigg] \\
& \quad = (I) + (II) + (III).
\eea \eeq}
We provide upper-bounds to the three different terms on the r.h.s above. 
\beq \bea \label{bound_(I)}
(I) &\leq \sum_{k=1}^j \mathop{\sum_{A\subsetneq A_{k}\setminus A_{k-1}}}_{A\;\text{critical}} \PP\Bigg[ \exists \s \in
\Sigma_{N,A_{k-1}\cup A}, \; \text{such that}\\
& \hspace{2cm} \forall l=1, \dots, k-1\; \overline X_{\s(1), \dots, \s(l)} \in \Diamond, \sum_{J\in \widehat {\cal P}_{N,A_k}}
X_{\s_{J}}^{\{J\}} -a_{N,k}(A) \geq N^\eta\Bigg]\\
& \leq \sum_{k=1}^j \mathop{\sum_{A \subsetneq A_{k} \setminus A_{k-1}}}_{A\;\text{critical}} 2^{\g(A_{k-1})N}\left\{\prod_{l=1,\dots, k-1}
p_N(l, \Diamond) \right\} 2^{\g(A)N} \PP\Big[ \sqrt{N} Y_{k, A} -a_{N,k}(A) \geq N^\eta \Big]
\eea \eeq
It is easily seen that $\PP\Big[ \sqrt{N} Y_{k, A} -a_{N,k}(A) \geq N^\eta \Big] \lesssim \exp\Big[-{\be_k^2\over 2}\widehat \a_k(A)N-const \times N^\eta \Big]$
for some positive $const$, and for critical $A\subsetneq A_{k}\setminus A_{k-1},\;{\be_k^2\over 2 }\widehat \a_k(A) = \g(A) \log 2$, so it follows from 
Lemma \ref{quadratic_expansion} that  $(I) \lesssim \exp\big[-const\times N^{\eta} \big]$ for large enough $N$.  
\beq \bea \label{bound_critical_eta}
& (II) \leq  \sum_{\s \in \Sigma_{N,A_j}}  \mathop{\sum_{k=1\dots, j}}_{A\subsetneq A_{k}\setminus A_{k-1}\; \text{critical}}\PP\Bigg[\overline X_{\s(1), \dots,
\s(l)} \in \Diamond, l\leq k, \; {\bf T_1}(\s,k, A, \vare_1)\; \text{holds}, \\
& \hspace{9cm} \; \sum_{J\in {\widehat \cal P}_{A,k}} X_{\s_J}^{\{J\}} - a_{N,k}(A) \leq N^\eta\Bigg]\\
& \qquad \leq 2^{\g(A_j) N}\mathop{\sum_{k=1,\dots, j}}_{A\subsetneq A_{k}\setminus A_{k-1}\;\text{critical}} p_N(k, \Diamond;
\vare_1, \eta)\times \mathop{\prod_{l=1,\dots, j}}_{l\neq k} p_N(\Diamond,
l)
\eea \eeq
Hence, by Lemma \ref{quadratic_expansion}, we have $(II)  \lesssim \vare_1$ for large enough $N$. Finally, 
\beq \bea 
(III) &\leq \mathop{\sum_{k=1,\dots, j}}_{A\subset A_{k}\setminus A_{k-1}} 2^{\g(A_{k})N} p_N^{>}(\Diamond, k, A, \vare_2) \prod_{l=1\dots, k-1} p_N(l, \Diamond) 
\eea \eeq
which by Lemma \ref{quadratic_expansion} is easily seen to be $\lesssim \exp[ - const \times \vare_2 ^2\times N]$ for some positive $const >0$. 
Putting the pieces together, we see that $\PP\Big[ \Sigma_{N,A_j}^R\setminus \Sigma_{N,A_j}^{R, \vare_1, \vare_2}\Big] = o(\vare_1)$. 
\begin{flushright}
$\square$
\end{flushright}

\subsection{Suppression of structures and propagation} \label{proof_non_ultrametric}
We first derive some bounds on "two-points probabilities". Let
\[\bea
&p_{N}^{(2)}(j, {\Diamond}, A,\varepsilon) \defi \PP\Big[\sqrt{N}Y_{j,A}+ \sqrt{N} Y_{j, A}^c-a_{N,j} \in {\Diamond}, \\
& \hspace{4cm} \sqrt{N} Y_{j,A}+ \sqrt{N} Z_{j, A}^c-a_{N,j} \in {\Diamond}, Y_{j,A} \leq \be_j \widehat
\a_{j}(A)(1+\varepsilon) \sqrt{N} \Big],    
\eea \]
and for critical $A \subsetneq A_j \setminus A_{j-1}$ write
\small{
\[\bea
&p_N^{(2,crit)}(j, {\Diamond}, A, \varepsilon) \defi \PP\Bigg[\sqrt{N}Y_{j,A}+ \sqrt{N} Y_{j, A}^c-a_{N,j}\,\text{and}\, \sqrt{N} Y_{j,A}+ \sqrt{N} Z_{j,
A}^c-a_{N,j} \in {\Diamond},\\
&\hspace{6cm}\text{and}\,{Y_{j,A}\over \widehat \a_j(A)} - {Y_{j, A}^c \over \widehat \a_j^c(A)} \leq -\varepsilon, \; {Y_{j,A}\over \widehat
\a_j(A)} - {Z_{j, A}^c \over \widehat \a_j^c(A)} \leq -\varepsilon
\Bigg]
\eea \]}
 \begin{lem} \label{quadratic_expansion_two} Let $\vare>0$. For $N$ large enough 
\begin{itemize} 
\item[a)] 
$ p_N^{(2)}(j, {\Diamond}, A,\varepsilon) \lesssim 2^{-2 G_j N}\exp\Big\{\be_j^2 \widehat \a_j(A)\big[1-{1\over
2}(1-\varepsilon)^2 \big]N\Big\}.$
\item[b)] $ p_{N}^{(2,crit)}(j, {\Diamond}, A, \varepsilon) \lesssim 2^{- 2 G_j N + \g(A)N} \exp\big[ -const \times \varepsilon 
\sqrt{N}\big].$
\end{itemize}
\end{lem}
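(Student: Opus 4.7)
The plan is to reduce both bounds to the same one-dimensional Laplace problem. Writing $a\defi \widehat\a_j(A)$, $b\defi \widehat\a_j^c(A)$, I would first condition on $Y_{j,A}=y$ and use the independence of $Y_{j,A}^c,Z_{j,A}^c$ (which enter symmetrically in two copies of the same event, hence the square), together with the pointwise Gaussian density bound
\[
\PP\big[\sqrt N\, Y_{j,A}^c\in \Diamond+a_{N,j}-\sqrt N\, y\big]\;\lesssim\; N^{-1/2}\exp\!\Big(-\tfrac{(a_{N,j}-\sqrt N y)^2}{2bN}\Big).
\]
Substituting $y=v\sqrt N$ and using $a_{N,j}=\be_j\Delta_j N+O(\log N)$, the remaining integral in $v$ is governed by the rate function
\[
\Phi(v)\;\defi\;\frac{v^2}{2a}+\frac{(\be_j\Delta_j-v)^2}{b},
\]
whose unconstrained minimizer is $v^\ast = 2\be_j a\Delta_j/(\Delta_j+a)$, and a short computation shows $v^\ast > \be_j a$ strictly, since $b>0$. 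Consequently $\Phi$ is strictly decreasing at the truncation endpoint in both claims (for $\vare$ small enough), and the standard tail bound $\int_{-\infty}^B e^{-N\Phi(v)}\,dv\lesssim e^{-N\Phi(B)}/(N|\Phi'(B)|)$ reduces everything to estimating $\Phi$ at the right endpoint.

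For part $a)$ the endpoint is $B=\be_j a(1+\vare)$; a direct expansion gives
\[
\Phi(B)-\be_j^2\Big(b+\tfrac{a}{2}(1-\vare)^2\Big)\;=\;\frac{\be_j^2 a^2\vare^2}{b}\;\ge\;0,
\]
so that $\exp(-N\Phi(B))\le\exp\{-N\be_j^2(b+\tfrac{a}{2}(1-\vare)^2)\}$; via the GREM phase-transition identity $\be_j^2\Delta_j=2G_j\log 2$ this is exactly the bound announced in $a)$. For part $b)$ the critical truncation $Y_{j,A}/a-Y_{j,A}^c/b\le -\vare$, after substituting $Y_{j,A}^c=(W_1+a_{N,j})/\sqrt N - Y_{j,A}$ and retaining leading orders, becomes the shifted endpoint $B=\be_j a - C\vare/\sqrt N$ with $C=ab/\Delta_j>0$; the simultaneous truncation with $Z$ produces the same shift. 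A first-order Taylor expansion at $v=\be_j a$, using $\Phi(\be_j a)=\be_j^2(b+a/2)$ and $\Phi'(\be_j a)=-\be_j$, delivers the advertised factor $\exp(-\mathrm{const}\cdot\vare\sqrt N)$. The prefactor is then rewritten via the criticality $\be_j^2 a=2\g(A)\log 2$ together with the balanced-complement identity $\be_j^2 b = 2(G_j-\g(A))\log 2$ noted in Remark \ref{complement_also_critical}, giving $2^{-(2G_j-\g(A))N}$.

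The main obstacle is essentially bookkeeping around the reduction to the 1D Laplace integral: the $O(\log N)$ correction in $a_{N,j}$, the $O(1)$ width of $\Diamond$, and (in part $b$) the $1/\sqrt N$-level truncation shift that arises after eliminating $Y_{j,A}^c$ must all be shown not to perturb the leading order. Since $|\Phi'|$ is bounded away from $0$ at the relevant endpoint, each of these corrections enters only through polynomial prefactors absorbed in the $\lesssim$, and does not affect the leading exponential rate.
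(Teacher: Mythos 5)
Your proposal is correct and follows essentially the paper's own route: condition on the shared component $Y_{j,A}$, square the identical independent Gaussian factor coming from $Y_{j,A}^c$ and $Z_{j,A}^c$, and observe that in the critical case the compactness of $\Diamond$ plus the truncation confines the shared variable to $v\lesssim \beta_j\widehat{\alpha}_j(A)-\mathrm{const}\cdot\varepsilon/\sqrt N$, so that a one-sided Laplace estimate (the paper's $\int_{-\infty}^{x_{\max}}e^{\beta_j x}dx$, your $|\Phi'(\beta_j\widehat{\alpha}_j(A))|=\beta_j$ tangent bound) produces the factor $e^{-\mathrm{const}\,\varepsilon\sqrt N}$, and the same criticality identities convert the prefactor into $2^{-2G_jN+\gamma(A)N}$; your rate-function bookkeeping is just a repackaging of the paper's completion of squares, and your explicit endpoint computation for part a) (which the paper dismisses as straightforward) is correct. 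The only caveat, that your endpoint-domination argument for a) requires $\varepsilon\leq\widehat{\alpha}_j^c(A)/\bigl(2\widehat{\alpha}_j(A)+\widehat{\alpha}_j^c(A)\bigr)$ so that the truncation point stays below the unconstrained minimizer $v^\ast$, is harmless because the lemma is only ever invoked with $\varepsilon_2$ sufficiently small (while your part b) argument works for every fixed $\varepsilon>0$, as needed).
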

\begin{proof} $a)$ is straightforward. $b)$ Setting $\omega_N = O(\log N)$ for $N\uparrow \infty$, it holds: 
{\small
\beq \bea \label{critical_one}
&p_{N}^{(2,crit)}(j, {\Diamond}, A, \varepsilon) \lesssim \int_{-\infty}^{\infty} \exp\left[- {\left( x+ a_{N,j}(A)\right)^2 \over 2\widehat
\a_j(A) N} \right] dx \left(\int_{\Diamond_x} \exp\left[- {\left( y+ \be_j \widehat \a^c_j(A)N\right)^2\over 2\widehat \a^c_j(A) N} \right] dy\right)^2, \\
&\quad \text{where}\quad {\Diamond}_x =  \Big\{\Diamond - x - \omega_N \Big\} \bigcap \left\{y\in \R:\; y \geq {\widehat \a_j^c(A)\over \widehat
\a_j(A)} x + \varepsilon \widehat \a_j^c(A) \sqrt{N} + \omega_N \right\}.\eea \eeq}
$\Diamond_x$ is not empty as soon as \hbox{\small{$x \leq x_{\max} \defi - \varepsilon {\widehat \a_j(A) \widehat \a_j^c(A) \over \Delta_j} \sqrt{N} +\omega_N$}}. Thus, 
\small{
\beq \bea \label{critical_two}
\eqref{critical_one} &\lesssim \int_{-\infty}^{x_{\max}} \exp\left[- {\left( x+ a_{N,j}(A)\right)^2 \over 2\widehat
\a_j(A) N} \right] dx \left(\int_{\Diamond - x - \omega_N} \exp\left[- {\left( y+ \be_j \widehat \a^c_j(A)\right)^2\over 2\widehat \a_j^c(A) N} \right] dy \right)^2 \\
& \lesssim \exp\Big[ - {\be_j^2} \widehat \a_j^c(A) N + \omega_N \Big] \int_{-\infty}^{x_{\max}} \exp\Bigg[- { \big(x - \be_j \widehat \a_j(A) N +
\omega_N\big)^2\over 2\widehat \a_j(A)N}\Bigg]dx  \\
& \lesssim  \exp\Big[ - {\be_j^2} \widehat \a_j^c(A) N - {\be_j^2\over 2} \widehat \a_j(A) N + \omega_N\Big]
\underbrace{\int_{-\infty}^{x_{\max}} \exp\big[\be_j x \big] dx}_{\leq \exp\big(- const \times \varepsilon \sqrt{N}\big)}.  
\eea \eeq}
By criticality (cfr. remark \ref{complement_also_critical}), 
\[
{\be_j^2 \over 2} \widehat \a_j(A) = \g(A) \log 2,\quad {\be_j^2\over 2} \widehat \a_j^c(A) = \big[\g(A_j) - \g(A\cup A_{j-1}) \big]\log 2 ,
\]
hence  
\[\eqref{critical_two} \leq 2^{- 2 G_j N} \exp\big[\g(A) N\log 2 \big] \exp\big[ - const \times \varepsilon  \sqrt{N}\big].\]
\end{proof}

We put on rigorous ground the suppression of structures at given level, say $j$.

\begin{prop}[Suppression]\label{onset_um_one} Let $\s',\tau'$ be two reference configurations in $\Sigma_{N,A_{j-1}}$.  For positive $\vare_1$ and
sufficiently small $\vare_2$ there exists $const>0$ such that 
\small{\beq \bea \label{existence_conditional}
&\PP\Big[ \exists\, \s,\tau \in \Sigma_{N, A_j}^{R, \vare_1,\vare_2},\;\s(j)\neq \tau(j), \; \s_{A_{j-1}} =\s', \tau_{A_{j-1}} =
\tau':\\
&\hspace{5cm} \s_s = \tau_s \;\text{for some}\; s \in A_{j}\setminus A_{j-1}\Big] \lesssim \exp\Big[ - const \times \vare_1 \sqrt{N}\Big].
\eea \eeq}
\end{prop}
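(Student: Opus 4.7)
The plan is a union bound over the agreement set $A \defi \{i\in A_j\setminus A_{j-1} : \s_i = \tau_i\}$ combined with a two-point Gaussian estimate from Lemma \ref{quadratic_expansion_two}. By hypothesis $A$ is a nonempty proper subset of $A_j\setminus A_{j-1}$, and there are only finitely many such $A$. For each fixed $A$, the number of pairs $(\s,\tau)\in \Sigma_{N,A_j}^2$ with $\s_{A_{j-1}}=\s'$, $\tau_{A_{j-1}}=\tau'$ and $\s_i = \tau_i$ iff $i\in A$ on $A_j\setminus A_{j-1}$ is bounded by $2^{(2G_j-\g(A))N}$.

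The events at levels $k\leq j-1$ in the definition of $\Sigma_{N,A_j}^{R,\vare_1,\vare_2}$ depend only on $\s',\tau'$ and on the variables $X_\cdot^J$ with $J\in \mathcal P_{A_{j-1}}$, which are independent of the level-$j$ randomness, so I bound their indicators by $1$ and focus on level $j$. Splitting
\[
X_{\s(1),\dots,\s(j)} = \sum_{J\in \widehat{\cal P}_{A,j}} X_{\s_J}^J + \sum_{J\in \widehat{\cal P}_{A,j}^c} X_{\s_J}^J,
\]
and similarly for $\tau$, every $J\in \widehat{\cal P}_{A,j}^c$ meets $(A_j\setminus A_{j-1})\setminus A$, so $\s_J\neq \tau_J$ and the Gaussians $X_{\s_J}^J$, $X_{\tau_J}^J$ are independent. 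When $\s'=\tau'$, the terms indexed by $\widehat{\cal P}_{A,j}$ are entirely shared between $\s$ and $\tau$; any disagreement in $\s',\tau'$ breaks some of this sharing into further independence which only decreases the joint probability, so it suffices to treat $\s'=\tau'$. This is exactly the structure modelled by Lemma \ref{quadratic_expansion_two} with $Y_{j,A}$ shared and $Y_{j,A}^c$, $Z_{j,A}^c$ independent.

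Two cases now arise. If $A$ is non-critical, maximality of $A_j$ in the construction of the chain gives the strict inequality $\be_j^2 \widehat\a_j(A)/2 < \g(A)\log 2$; Lemma \ref{quadratic_expansion_two}(a), which incorporates the $\boldsymbol T_2$-truncation, combined with the counting of pairs yields
\[
2^{(2G_j-\g(A))N}\cdot p_N^{(2)}(j,[-R,R],A,\vare_2) \lesssim \exp\left\{\left[\tfrac{\be_j^2 \widehat\a_j(A)}{2} - \g(A)\log 2 + O(\vare_2)\right]N\right\},
\]
which decays exponentially in $N$ for $\vare_2$ small enough. If $A$ is critical, the equality $\be_j^2 \widehat\a_j(A)/2 = \g(A)\log 2$ holds, the energy/entropy exponents balance exactly, and the bare two-point estimate yields no decay; here the $\boldsymbol T_1$-truncation is indispensable, and Lemma \ref{quadratic_expansion_two}(b) gives
\[
2^{(2G_j-\g(A))N}\cdot p_N^{(2,crit)}(j,[-R,R],A,\vare_1) \lesssim \exp[-const\cdot \vare_1\sqrt{N}].
\]
Summing over the finitely many $A$, the critical contribution dominates and gives the claim. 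The main obstacle is precisely this critical case: the argument works only because Condition $\boldsymbol c$ (see Remark \ref{complement_also_critical}) makes $\boldsymbol T_1$ non-vacuous, so that an additive lowering by $\vare_1\sqrt N$ of $\sum_{J\in\widehat{\cal P}_{A,j}}X_{\s_J}^J$ is itself a rare event; without it (as for model M3) suppression genuinely fails.
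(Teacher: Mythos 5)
Your proposal is correct and follows essentially the same route as the paper: a union bound over the agreement set $A\subsetneq A_j\setminus A_{j-1}$, the pair count $2^{2G_jN}2^{-\g(A)N}$, and the two-point estimates of Lemma \ref{quadratic_expansion_two}, with the critical sets handled via ${\bf T}_1$ (yielding the $e^{-const\,\vare_1\sqrt N}$ decay) and the non-critical ones via ${\bf T}_2$ together with the strict inequality $\tfrac{\be_j^2}{2}\widehat\a_j(A)<\g(A)\log 2$. The only genuine addition is your explicit Jensen-type reduction to the case $\s'=\tau'$ (disagreement in the reference configurations only decreases the shared part and hence the two-point probability), a point the paper leaves implicit; your attribution of the strict non-critical inequality to "maximality of $A_j$" should rather be to the minimality defining $\be_j=\hat\rho(A_{j-1})$, but this is cosmetic.
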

\begin{proof} 
The l.h.s of \eqref{existence_conditional} is clearly bounded by 
\small{
\beq \bea \label{upper_bound_dependency}
&\mathop{\sum_{A\subsetneq A_{j} \setminus A_{j-1}}}_{A\; \text{critical}} \sum^{\star} \; \PP\Bigg[ \overline X_{\s(1),\dots, \s(j)}\,\text{and}\,\overline X_{\tau(1),\dots, \tau(j)} \in {\mathfrak
R},\;  {\bf T}_1(\s,j, A,\varepsilon_1),\; {\bf T}_1(\tau, j, A, \varepsilon_1)\; \text{hold} \Bigg]+ \\
&+\mathop{\sum_{A\subset A_{j}\setminus A_{j-1}}}_{ A\;\text{non-critical}} \sum^{\star} \; \PP\Bigg[ \overline X_{\s(1),\dots, \s(j)}\,\text{and} \,\overline X_{\tau(1),\dots, \tau(j)} \in {\mathfrak
R};\, {\bf T}_2(\s,j, A,\varepsilon_2),\,\text{and}\, {\bf T}_2(\tau, j, A, \varepsilon_2) \; \text{hold} \Bigg]. \\
\eea \eeq }
In both cases, \hbox{\small{$\stackrel{\star}{\sum}$}} runs over all the \hbox{\small{$\s, \tau \in \Sigma_{N, A_j}$}} such that \hbox{\small{$\s(j) \neq \tau(j)$}}, as well as
\hbox{\small{$ \s_{A_{j-1}} = \s', \tau_{A_{j-1}}=\tau',\; \s_J =\tau_J$}} for every \hbox{\small{$J\in \widehat {\cal P}_{A,j}$ and $\s_J\neq \tau_J$}} for every
\hbox{\small{$J\in \widehat {\cal P}^c_{A,j}$}}. To fixed $A\subset A_{j} \setminus A_{j-1}$ there are at most \hbox{\small{$2^{2G_j N} 2^{-\g(A)N}$}} couples of
$\s,\tau$ satisfying these requirements. Thus we may upper bound \eqref{upper_bound_dependency} by
\small{
\beq \bea \label{final_upper_bound_dep}
&\mathop{\sum_{A\subsetneq A_{j}\setminus A_{j-1}}}_{A\;\text{critical}} 2^{2G_j N} 2^{-\g(A)N} p_N^{(2,crit)}(j,{\Diamond}, A, \varepsilon_1) +\mathop{\sum_{A\subset A_{j}\setminus A_{j-1}}}_{A\;\text{non-critical}} 2^{2G_j N} 2^{-\g(A)N} p_N^{(2)}(j,{\Diamond}, A,
\varepsilon_2)\\
& \stackrel{\text{Lemma}\, \ref{quadratic_expansion_two}}{\lesssim} \mathop{\sum_{A\subset A_{j} \setminus A_{j-1}}}_{A\; \text{critical}} e^{- const \times 
\varepsilon_1 \sqrt{N}} + \mathop{\sum_{A\subset A_{j}\setminus A_{j-1}}}_{A\; \text{non-critical}} 2^{- \g(A)N}\exp\Big\{\be^2_j \widehat
\a_j(A)\Big[1-{1\over 2}(1-\varepsilon_2)^2\Big]N\Big\}. 
\eea \eeq }
For non-critical \hbox{\small{$A, \;\be_j^2 \widehat \a_j(A) < \g(A) 2\log 2$}} strictly, so we can find $\vare_2$ small enough such that 
\beq \label{small_enough_one}
\de'(\vare_1) \defi\max_{j\leq K} \; \max_{A\subsetneq A_{j}\setminus A_{j-1};\; A\; \text{non-critical}}\; \Bigg\{\be^2_j \widehat
\a_j(A)\Big[1-{1\over 2}(1-\vare_2)^2\Big] - \g(A) \log 2\Bigg\} <0.  
\eeq
The second sum on the r.h.s of \eqref{final_upper_bound_dep} is thus \hbox{\small{$\lesssim \exp\big[- |\de'| N\big]$}}, while the first sum is \hbox{\small{$\lesssim \exp\big[
- const \times \varepsilon_1 \sqrt{N}\big]$}}. This proves the claim.
\end{proof}

Suppose now that two configurations $\s,\tau \in \Sigma_{N,A_j}^{R, \varepsilon_1, \vare_2}$ are such that $\s_s =\tau_s$ for some $s\in A_{m} \setminus A_{m-1}$ for some $m\leq j$ but $\s_t \neq \tau_t$ for some $t\in A_{r}\setminus A_{r-1}$ and
$r<m$. Without loss of generality we may assume that there are numbers $k,l,m,\, 0\leq k < l < m \leq j$ such that $\s_{A_{k}} = \tau_{A_{k}}, \,\s_{r}
\neq \tau_{r}\;\forall r \in A_{l}\setminus A_{k}$, and $\s_{A_{m}\setminus A_{l}} = \tau_{A_{m}\setminus A_{l}}$. 

\begin{prop}[Propagation] \label{onset_um_two}
For positive $\vare_1$ and small enough $\vare_2$ there exists positive $const$ such that
\small{\beq \bea \label{equal_not_equal}
&\PP\Bigg[\exists\,\s,\tau \in \Sigma_{N,A_{m}}^{R,\vare_1, \vare_2}:\s_{A_{k}} = \tau_{A_{k}}, \s_r \neq \tau_r \, \forall r\in
A_k\setminus A_l,\; \s_{A_{m}\setminus A_{l}} = \tau_{A_{m}\setminus A_{l}}\Bigg] \lesssim e^{-const\times N}.
\eea \eeq } 
\end{prop}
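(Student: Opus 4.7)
The plan is to mirror the argument of Proposition \ref{onset_um_one}, but now propagate it across three groups of levels: the indices in $A_k$, where $\s=\tau$ and the Hamiltonian contributions coincide; the disagreement block $A_l\setminus A_k$, where the two contributions become fully independent; and the upper block $A_m\setminus A_l$, where the decomposition "common vs.\ independent" of each level depends on whether $J$ meets $\tilde A := A_l\setminus A_k$ or not. The argument then reduces to a union bound over pairs $(\s,\tau)$ with the given pattern, factorisation of the joint energy-localisation probability level by level, and a strict-inequality check for the resulting exponent. Condition $\boldsymbol c'$ is precisely what makes the last inequality strict.

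First, count: the number of pairs $(\s,\tau)\in\Sigma_{N,A_m}\times\Sigma_{N,A_m}$ matching the agreement pattern is $\lesssim 2^{(\g(A_m)+\g(A_l)-\g(A_k))N}$. Next, by independence of the Hamiltonian components across levels, the joint event $\{\overline X_{\s(1),\dots,\s(j')},\overline X_{\tau(1),\dots,\tau(j')}\in\Diamond\ \forall j'\leq m\}$ factorises level by level. For $j'\leq k$ one has $X_{\s(j')}=X_{\tau(j')}$, so the per-level contribution is $\lesssim 2^{-G_{j'}N}$ by Lemma \ref{quadratic_expansion}(a). For $k<j'\leq l$ every $J\in\mathcal P_{A_{j'}}\setminus\mathcal P_{A_{j'-1}}$ must touch $A_{j'}\setminus A_{j'-1}\subseteq\tilde A$ (since $k\le j'-1$ and $j'\le l$), hence $\s\ne\tau$ on at least one coordinate of $J$; therefore $X_{\s(j')}$ and $X_{\tau(j')}$ are fully independent, contributing $\lesssim 2^{-2G_{j'}N}$.

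The delicate block is $l<j'\leq m$. There I split $\D_{j'}=\a^{(c)}_{j'}+\a^{(i)}_{j'}$ with
$\a^{(c)}_{j'}=\sum_{J\in\mathcal P_{A_{j'}}\setminus\mathcal P_{A_{j'-1}},\,J\cap\tilde A=\emptyset} a_J$
(the "common" variance, where $X^{J}_{\s_J}=X^{J}_{\tau_J}$) and $\a^{(i)}_{j'}$ its complement. Re-running the Gaussian integral that proved Lemma \ref{quadratic_expansion_two}(a), with this decomposition replacing $(\hat\a_j(A),\hat\a_j^c(A))$ and using $\boldsymbol T_2$ to tame the tail of the common variable, yields the per-level bound
\[
\lesssim 2^{-2G_{j'}N}\exp\Big\{\be_{j'}^2\,\a^{(c)}_{j'}\Big[1-\tfrac12(1-\vare_2)^2\Big]N\Big\}.
\]
Critical common contributions are handled in the same way using the sharper $\boldsymbol T_1$-truncation, exactly as in Lemma \ref{quadratic_expansion_two}(b). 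Multiplying count times probabilities and using $\be_{j'}^2=2\log 2\cdot G_{j'}/\D_{j'}$, the logarithm of the total bound at $\vare_2=0$ becomes
\[
\log 2\cdot\Big[\sum_{j'=l+1}^m G_{j'}\,\frac{\a^{(c)}_{j'}}{\D_{j'}}-\sum_{j'=l+1}^m G_{j'}\Big]\,N.
\]

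Since $\a^{(c)}_{j'}\le\D_{j'}$ trivially, the bracket is $\le 0$, and strict negativity reduces to having $\a^{(i)}_{j'}>0$ for some $j'\in\{l+1,\dots,m\}$. This is precisely what condition $\boldsymbol c'$ at index $l+1$ provides: it produces $s\in A_l\setminus A_{l-1}\subseteq A_l\setminus A_k=\tilde A$ (using $k\le l-1$) together with $J\in\mathcal P_{A_{l+1}}\setminus\mathcal P_{A_l}$ containing $s$; hence $J\cap\tilde A\ne\emptyset$ and $\a^{(i)}_{l+1}\ge a_J>0$. Choosing $\vare_2$ small enough preserves the strict negativity of the bracket, producing the desired $e^{-\mathrm{const}\cdot N}$ bound. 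The main technical obstacle is the two-point Gaussian computation at the upper levels: the agreement pattern of $(\s,\tau)$ does not fit the single-level template of Lemma \ref{quadratic_expansion_two}, so one has to recast the common/independent split with the new pair $(\a^{(c)}_{j'},\a^{(i)}_{j'})$ and carry $\boldsymbol T_1$, $\boldsymbol T_2$ simultaneously across all levels $j'\in\{l+1,\dots,m\}$; once this bookkeeping is in place, the combinatorics are parallel to those of Proposition \ref{onset_um_one} and the conclusion follows.
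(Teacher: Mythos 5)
Your proposal is correct in outline and rests on the same mechanism as the paper's proof: a union bound over pairs with the prescribed agreement pattern, level-by-level factorisation with one-point estimates up to $A_k$, full independence on the disagreement block $A_l\setminus A_k$ (every $J$ appearing between levels $k+1$ and $l$ meets that block), a truncated two-point Gaussian estimate above level $l$, and condition $\boldsymbol c'$ to produce a genuinely independent component at level $l+1$, whence a strictly negative exponent once $\vare_2$ is small. The genuine difference is that the paper first reduces to $m=l+1$ without loss of generality: if $\s,\tau\in\Sigma^{R,\vare_1,\vare_2}_{N,A_m}$ exhibit the pattern, their restrictions to $A_{l+1}$ lie in $\Sigma^{R,\vare_1,\vare_2}_{N,A_{l+1}}$ and exhibit it with $m=l+1$, so the entire ``upper block'' collapses to the single top level, where the common/independent split is exactly the one already treated in Lemma \ref{quadratic_expansion_two} and no new Gaussian computation is needed. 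By keeping all levels $l<j'\leq m$ you must re-derive the two-point bound for your split $(\a^{(c)}_{j'},\a^{(i)}_{j'})$, and there your appeal to $\boldsymbol T_1,\boldsymbol T_2$ is not a pure transcription: those truncations constrain sums over the families $\widehat{\mathcal P}_{B,j'}$ with $B\subsetneq A_{j'}\setminus A_{j'-1}$, whereas your common variable is the sum over $\{J: J\cap(A_l\setminus A_k)=\emptyset\}$, whose members may have footprint inside $A_{j'-1}$, so identifying a truncated quantity that actually caps this common part needs an extra (if routine) argument; the $m=l+1$ reduction lets you bypass this bookkeeping entirely and is the cheaper route. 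On the other hand, your strictness argument is slightly more direct than the paper's: you get $\a^{(i)}_{l+1}\geq a_J>0$ straight from condition $\boldsymbol c'$, whereas the paper invokes $\max_{A\subsetneq A_{l+1}\setminus A_l}\widehat\a_{l+1}(A)<\D_{l+1}$. Your pair count $2^{[\g(A_m)+\g(A_l)-\g(A_k)]N}$, the use of $\be_{j'}^2=2\log 2\, G_{j'}/\D_{j'}$, and the final choice of $\vare_2$ small enough that the $O(\vare_2)N$ losses at upper levels with $\a^{(i)}_{j'}=0$ do not consume the fixed gap at level $l+1$ all agree with the paper's estimates and yield the claimed $e^{-const\times N}$ bound.
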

\begin{proof}
Without loss of generality we may assume $m = l+1$. Consider two configurations $\s,\tau \in
\Sigma_{N,A_{l+1}}$ which differ on the whole \hbox{\small{$A_{l}\setminus A_{k}$}} but \hbox{\small{$\s_{A_{l+1}\setminus A_l} =
\tau_{A_{l+1}\setminus A_l}$}}. By the irreducibility condition
{$\bf c'$} there exists \hbox{\small{$J\in {\cal P}_{A_{l+1}}\setminus {\cal P}_{A_{l}}$}} such that \hbox{\small{$\s_{J} \neq
\tau_{J}$}} in which case there must be a strict subset $A\subsetneq A_{l+1}\setminus A_{l}$ such that \hbox{\small{$\s_J = \tau_J$
for all $J \in \widehat{\cal P}_{l+1, A}$}} and \hbox{\small{$\s_J\neq \tau_J$}} for all \hbox{\small{$J \in \widehat {\cal P}^c_{l+1,
A}$}} (loosely speaking, the associated random variables \hbox{\small{$\overline
X_{\s(1),\dots,\s(l+1)}$}} and \hbox{\small{$\overline X_{\tau(1),\dots, \tau(l+1)}$}} cannot coincide). We can therefore bound the l.h.s. of \eqref{equal_not_equal} by 
\small{
\beq \bea \label{firstbound}
& \mathop{\sum_{A\subsetneq A_{l+1} \setminus A_{l} }}_{A\; \text{critical}} \sum^* \PP\Big[\overline X_{\s(1),\dots, \s(j)}\;\text{and}\; \overline X_{\tau(1),\dots,
\tau(j)} \in {\Diamond} \quad\text{for all}\; j=1, \dots k,\dots,l+1;  \\
&\hspace{8cm} {\bf T}_1(\s,l, A, \varepsilon_1)\; \text{and}\; {\bf T}_1(\tau,l, A, \varepsilon_1) \; \text{hold} \Big] + \\
& \mathop{\sum_{A\subsetneq A_{l+1} \setminus A_{l}}}_{A\; \text{non-critical}} \sum^* \PP\Big[\overline X_{\s(1),\dots, \s(j)}\;\text{and}\; \overline X_{\tau(1),\dots,
\tau(j)} \in {\Diamond} \quad \text{for all}\;j=1, \dots l+1;  \\
&\hspace{7cm} {\bf T}_2(\s,l, A, \varepsilon_2)\; \text{and}\; {\bf T}_2(\tau,l, A, \varepsilon_2) \; \text{hold}\Big],
\eea \eeq}
where $\stackrel{\star}{\sum}$ runs over those \hbox{\small{$\s,\tau$}} in \hbox{\small{$\Sigma_{N, A_{l+1}}$}} such that $\s_J = \tau_J$
for all \hbox{\small{$J\in \widehat {\cal P}_{l+1, A}$}}, \hbox{\small{$\s_J \neq \tau_J$}} \hbox{\small{$J\in \widehat {\cal
P}_{l+1,A}^c$}}, \hbox{\small{$\s_{A_k} = \tau_{A_k}$}}, \hbox{\small{$\s_{s}\neq \tau_s\, \forall s\in A_{l}\setminus A_k$}}, \hbox{\small{$\s_{A_{l+1}\setminus A_l}=
\tau_{A_{l+1}\setminus A_{l}}$}}. \\

\noi We also observe that \hbox{\small{$\s_s \neq \tau_s \;\text{for all}\; s\in A_{l}\setminus A_{k}$}}
implies that the random variables \hbox{\small{$\overline X_{\s(1),\dots, \s(j)}$}} and \hbox{\small{$\overline
X_{\tau(1),\dots,\tau(j)}$}} are independent for all $j=k+1\dots l$. In fact, for every \hbox{\small{$J\in {\cal P}_{A_{l}}\setminus {\cal
P}_{A_{k}}$}} by construction \hbox{\small{$J\cap (A_{l}\setminus A_{k}) \neq \emptyset$}}; this amounts to say that for every such $J$ there exists at least one \hbox{\small{$s\in
A_{l}\setminus A_{k}$}} with $J\ni s$. \\

\noi The above remarks, together with some simple counting steadily yield
\small{
\beq \bea \label{secondbound}
&\eqref{firstbound} \lesssim 2^{N\big[\g(A_{k})+2\g(A_{l}\setminus A_{k})+\g(A_{l+1}\setminus A_l)\big]}\prod_{r\leq k} p_N\big(r,{\Diamond}\big) \prod_{r=k+1}^{l}
p_N\big(r, {\Diamond}\big)^2  \times \\
&\hspace{2cm} \times  \Bigg\{ \mathop{\sum_{A\subsetneq A_{l+1}\setminus A_l}}_{A\; \text{critical}} p_{N}^{(2,crit)}(l+1, {\Diamond}, A, \varepsilon_1)
+\mathop{\sum_{A\subsetneq A_{l+1}\setminus A_{l}}}_{A\;\text{non-critical}} p_N^{(2)}\big(l+1, {\Diamond}, A, \varepsilon_2\big) \Bigg\} \\
&\stackrel{\text{Lemma}\;\ref{quadratic_expansion_two}}{\lesssim} \mathop{\sum_{A\subsetneq A_{l+1}\setminus A_{l}}}_{A\; \text{non-critical}} \exp\left\{ 2\log 2 G_{l+1} N \left[ \Big(1- {1\over 2}\big(1-\varepsilon_2\big)^2\Big) {\widehat
\a_{l+1}(A)\over \Delta_{l+1}} -{1\over 2} \right] \right\} + \\
& \hspace{5cm} + \mathop{\sum_{A\subsetneq A_{l+1}\setminus A_{l}}}_{A\; \text{ critical}}  2^{(\g(A) - G_{l+1})N}
\exp\big[- const \times \varepsilon_1 \sqrt{N}\big].
\eea \eeq}
Clearly, the second sum on the r.h.s above is \hbox{\small{$\lesssim \exp\big[-\big|\de'\big| N\big]$}} for 
\[
\de' \defi \max_{l\leq K-1} \mathop{\max_{A\subsetneq A_{l+1} \setminus A_l}}_{A\; \text{critical}} \Big\{ \g(A) - G_{l+1} \Big\} <0.
\] 
It is crucial that the first sum runs over (non-critical) subsets strictly included in $A_l\setminus A_{l+1}$, since it guarantees that $\max_{A\subsetneq
A_{l+1}\setminus A_l} \widehat \a_{l+1}(A) < \Delta_{l+1}$  and thus, for small enough $\vare_2$, 
\beq \label{small_enough_two}
\de''(\varepsilon_2) \defi \max_{l\leq K-1}\max_{A\subsetneq A_{l+1}\setminus A_l} \Bigg\{ (2\log 2) G_{l+1}\Bigg[\Big(1- {1\over
2}\big(1-\vare_2\big)^2\Big){\widehat \a_{l+1}(A)\over \Delta_{l+1}}
-{1\over 2} \Bigg] \Bigg\} <0.
\eeq
This settles the Lemma with $const \defi \min\big\{ \big|\de'\big|, \big|\de''\big|\big\}$. 
\end{proof}

\subsection{Proof of Proposition \ref{nonultra_prop}}
Let $\e >0$ and the compat set $\Diamond \subset \R$ be given. By Proposition \ref{localized_r} and \ref{random_configuration_space} we may find $R>0$ and $\vare_1 >0$, such that (for any $\vare_2$)
\[
\PP\left[\exists \s \in \Sigma_{N, A_j}\setminus \Sigma_{N, A_j}^{R, \vare_1, \vare_2}: \widehat X_{\s(1), \dots, \s(j)} \in \Diamond \right]\leq \e/3, 
\]
for large enough $N$. 

By Markov inequality, together with the estimates from Lemma
\ref{quadratic_expansion}, it is easily seen that there exists ${\sf N} = {\sf N}(\epsilon)$ such that the probability that there exist
more than $\sf N$ configurations in \hbox{\small{$\Sigma_{N,A_j}^{R, \vare_1, \vare_2}$}} is smaller than
$\epsilon/3$.

Therefore, it suffices to estimate the probability that, out of a finite number $\sf N$ of configurations in \hbox{\small{$\Sigma_{N,A_j}^{R, \vare, \vare_2}$}} some of them form a non ultrametric
couple. But this case is taken care of by Proposition \ref{onset_um_one} and \ref{onset_um_two} (and a straightforward combination of the two). By choosing $\vare_2$ {\it small
enough}, in the range of validity of \eqref{small_enough_one} and \eqref{small_enough_two}, the probability of such an event is of order $\exp[-
const \times \sqrt{N}]$, thus smaller than $\epsilon/3$ for large enough $N$.

This settles the claim. 
\begin{flushright}
$\square$
\end{flushright}

\subsection{Proof of Proposition \ref{partial_energies}} \label{proof_extremal_process_overlap}
Let $R, \vare_1, \vare_2$ be given, and consider the element ${\mathcal N}_{j,N}^{R,\vare_1,\vare_2}$ of $\mathcal M_{mp}(\R^{2}\times 2^{A_j})$ induced naturally by the collection 
$(\widehat X_{\s(1), \dots, \s(j)}, \s \in \Sigma_{N, A_j}^{R,\vare_1, \vare_2})$. We denote by $\widehat X_{j, N}^{R, \vare_1, \vare_2}$ the law of such a process. 
We now claim that in order to prove Proposition \ref{partial_energies} it suffices to prove that for $\vare_2$ in the range of validity of \eqref{small_enough_one} and \eqref{small_enough_two}, 
\beq \label{approx_by_truncation} 
\lim_{N\to \infty} \widehat X_{j, N}^{R, \vare_1, \vare_2} = \widehat X_{j}^{R, \vare_1},
\eeq
where the latter is the law of the element in $\mathcal M_{mp}(\R^{2}\times 2^{A_j})$ naturally induced by the collection of points $(x_{\boldsymbol i}, \boldsymbol i \in \N^j)$, with $x_{\boldsymbol i} = x_{i_1}^1+\dots + x_{i_1,\dots, i_j}^j$ and the properties: 
{\it i.} For $l=1,\dots, j$ and multi-index ${\bf i}_{l-1}$, the point process \hbox{\small{$(x_{{\bf i}_{l-1},
i_l}^l;\; i_l\in \N)$}} is poissonian with density \hbox{\small{${\mathcal C}^{\vare_1}_{l} \cdot \be_l e^{-\be_l t} dt$}} on $[-R,R]$ (and zero otherwise). {\it ii.} The $x^l$ are independent for different $l$.
{\it iii.} $(x_{{\bf i}_{l-1}, i_l}^l;\; i_l \in \N)$ are independent for different ${\bf i}_{l-1}$. {\it iv.} If \hbox{\small{$A_{l}\setminus A_{l-1}$}} contains no critical subsets, then ${\mathcal C}^{\vare_1}_{l} = 1$, otherwise   
\[ 
{\mathcal C}_{l,\vare_1} = \PP\left[\left\{{Y_{l, A}\over \widehat \a_l(A)} - {Y_{l, A}^c\over \widehat \a^c_l(A) }\leq -\vare_1 \right\} \; \forall A \subsetneq A_{l}
\setminus A_{l-1}, \; A\;\text{is critical} \right]. 
\]
In fact, it is rather straightforward that, with $\widehat X_j$ as in Proposition \ref{partial_energies},  
\beq 
\lim_{\vare_1 \to 0} \lim_{R\to \infty} \widehat X_{j}^{R, \vare_1} = \widehat X_{j},
\eeq
and therefore, by Proposition \ref{existence_lemma} and \ref{random_configuration_space}, \eqref{approx_by_truncation} would automatically imply Proposition \ref{partial_energies}. \\

So, the crucial step to prove Proposition \ref{partial_energies} is really to prove \eqref{approx_by_truncation}.\\

The underlying Derrida-Ruelle cascades enjoy important properties that we will exploit in order to get \eqref{approx_by_truncation}. Most importantly, once one knows what happens on level $j-1$ (the distribution on the real axis of the
points $x_{i_1}^{1}+\dots + x^{j-1}_{i_1, \dots, i_{j-1}}$, as well as their overlap structure) the "full process" is obtained by adding random points independently: conditioned on the 
first $j-1$ levels, given $k\in \N$ multi-indeces ${\bf i}^1, \dots, {\bf i}^k \in \N^{j-1}$, and disjoints $B_1, \dots, B_k \subset [-R,R]$, 
we have the following equality in distribution 
\beq \label{fundamental_form} 
\left(\sum_{l\in \N} \de_{x^j_{{\bf i}^1, l}}(B_1), \dots, \sum_{l\in \N}\de_{x^j_{{\bf i}^k, l}}(B_k)\right) \stackrel{(d)}{=} \Big(V_1, \dots, V_k\Big)
\eeq 
with the random variables $V_r, r=1,\dots, k$ being independent, Poisson-distributed of parameters $\mu_{\vare_1}(B_r) \defi \int_{B_r} C_{j, \vare_1} \be_j e^{-\be_j t} dt$.  
By conditioning, the finite dimensional distribution of the limiting process $\widehat X_{j}^{R, \vare_1}$ can be brought back to
expressions such as \eqref{fundamental_form}, and in fact the same line of reasoning works also for the finite $N$ system, as we shall elucidate below.\\ 

We introduce the projection $\mathfrak P: \R^2 \to \R, (x, y)\mapsto x+y$, and consider the points
\[
\Bigg\{\Big(\widehat X_{\s(1), \dots, \s(j-1)}, \overline X_{\s(1), \dots, \s(j-1), \s(j)}\Big), \s\in \Sigma_{N,A_j}^{R, \vare_1, \vare_2}.\Bigg\}
\]
This induces naturally a process \hbox{\small{${\cal N}^{(2)}_{N,j} \in {\mathcal M}_{mp}\left( (\R^2)^{(2)} \times 2^{A_j}\right)$}}, where, to lighten notations we omit the dependence on $R, \vare_1, \vare_2$. The process ${\cal N}_{N,j}^{R,\vare_1,\vare_2}$ is then the "image" of ${\cal
N}_{N, j}^{(2)}$ under the projection $\mathfrak P$ (the points $(\widehat X_{\s(1), \dots, \s(j-1)}, \overline X_{\s(1), \dots, \s(j)})$ are projected to
$\widehat X_{\s(1), \dots, \s(j-1)} + \overline  X_{\s(1), \dots, \s(j)} =  \widehat X_{\s(1), \dots, \s(j)}$). To handle the 
finite dimensional distributions of the "multidimensional process" ${\mathcal
N}^{(2)}_{N,j}$, we observe that is easily follows from Proposition \ref{nonultra_prop} that
\[ 
\lim_{N\to \infty} \PP\Big[{\cal N}_{N,j}^{(2)}(\R\times \R; A)>0 \Big] = 0, \;\forall A\in 2^{A_j} \setminus \{ \emptyset, A_1, \dots, A_{j-1} \}.
\]
The events involving overlaps in the chain $\{\emptyset, \dots, A_j\}$ are easily handled through the following remark: conditionining the process ${\cal N}_{N,j}^{(2)}$ to the 
sigma-field generated by the process ${\cal N}_{N,j-1}$ amounts to prescribe a finite number, say $L$, of configurations $\s^1, \dots, \s^L\in \Sigma_{N,A_{j-1}}$, as well
as their overlap structure. By ultrametricity, the overlaps among these $L$ configurations take values in the chain $\{\emptyset, \dots, A_{j-1}\}$ only. But then, it is
easy to reformulate the finite dimensional distributions of the process ${\cal N}_{N,j}^{(2)}$ given the process ${\cal N}_{N,
j-1}^{R, \vare_1, \vare_2}$ into finite dimensional probabilities of the point processes $(\overline X_{\s^r, \tau}, \tau \in \Sigma_{N,A_j \setminus A_{j-1}})$, with prescribed $\s^1, \dots,
\s^L$ for $r = 1,\dots, L$. Summarizing, one gets the weak convergence of ${\cal N}_{N,j}^{(2)}$ towards the process ${\cal N}^{(2)}_j$ naturally induced by the points $\big\{(y_{\bf i},
y_{{\bf i}, l}); {\bf i}\in \N^{j-1}, l\in \N \big\}$ on $\R^2$, and (by continuity on compacts of the projection ${\mathfrak P}$) weak convergence of ${\cal N}_{N,j}^{R,\vare_1,\vare_2}$ as soon as we prove that for given family of reference configurations $\s^{1}, \dots, \s^{k} \in
\Sigma_{N,A_{j-1}}$ with a certain overlap structure $q(\s^r, \s^t) \in \{\emptyset, \dots, A_{j-1}\}$, and $r,s=1,\dots, k$ the distribution of the random vector 
\beq \label{multivariate}
\left( \sum^{(1)} \de_{\overline X_{\s(1), \dots, \s(j)}}(B_1), \dots, \sum^{(k)} \de_{\overline X_{\s(1), \dots, \s(j)}}(B_k)\right)
\eeq
(with sums running over those $\s\in \Sigma_{N,A_j}$ such that $\s_{A_{j-1}} = \s^r$ and satisfying conditions ${\bf T_1}(\vare_1)$ and ${\bf T_2}(\vare_2)$ on $j$-level) 
is approximately multivariate Poisson, cfr. \eqref{fundamental_form}. To see this last step, we will use the socalled {\it Chen-Stein method}, a particularly efficient tool in Poisson approximation, cfr. \cite{barbour}. \\ 

We begin with a technical estimate. For bounded real subset $\Diamond$, and $\de, \rho>0$ we set:   
\[ \bea
&p_N^{\de, \rho}\big(j, {\Diamond}\big) \defi \PP\Bigg[\overline Y_j \in {\Diamond};\;\forall\; \text{critical}\;B\subsetneq A_{j}\setminus A_{j-1}: \; {Y_{j,B}\over \widehat
\a_j(B)} - {Y_{j, B}^c \over \widehat \a_j^c(B)} \leq -\de; \\
&\hspace{4cm} \forall\; A\subset A_{j}\setminus A_{j-1}, \widehat \a_j(A) >0: \overline Y_{N,j}(A) \leq \be_j(1+\rho) \widehat \a_j(A) \sqrt{N}\Bigg]. 
\eea \]

\begin{lem} \label{miracle_constant} For $N \uparrow \infty$, it holds
\[\bea
p_N^{\de, \rho}\big(j,{\Diamond}\big) = {\mathcal C}_{j, \de} \times 2^{ - G_j N} \int_{\Diamond}  \be_j \exp\left[-\be_j x + o(1) \right] dx + O\big( 2^{-G_j N} e^{-const \times
N}\big)
\eea \]
\end{lem}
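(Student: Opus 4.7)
The strategy is to exploit a Gaussian decoupling between the critical-subset constraints and the overall value $\overline Y_j$, which makes the constant $\mathcal C_{j,\delta}$ factor out exactly, and then to dispose of the non-critical upper truncations as exponentially rare events by appealing to Lemma~\ref{quadratic_expansion}(b).

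First, for each critical $B\subsetneq A_j\setminus A_{j-1}$ I would introduce the centered Gaussian $W_{j,B}\defi Y_{j,B}/\widehat\a_j(B)-Y^c_{j,B}/\widehat\a^c_j(B)$ and observe that
\[
\E[W_{j,B}\, Y_j] \;=\; \frac{\E[Y_{j,B}^2]}{\widehat\a_j(B)} \;-\; \frac{\E[(Y^c_{j,B})^2]}{\widehat\a^c_j(B)} \;=\; 1-1 \;=\; 0,
\]
using that $Y_{j,B}$ and $Y^c_{j,B}$ are independent centered Gaussians of variances $\widehat\a_j(B)$ and $\widehat\a^c_j(B)$, and that $Y_j=Y_{j,B}+Y^c_{j,B}$. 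Since the joint vector $\big((W_{j,B})_{B\text{ critical}},\,Y_j\big)$ is Gaussian with all cross-covariances vanishing, the family $(W_{j,B})_{B\text{ critical}}$ is independent of $Y_j$, and hence of $\overline Y_j=\sqrt N\, Y_j-a_{N,j}$. Combined with the definition of $\mathcal C_{j,\delta}$ and Lemma~\ref{quadratic_expansion}(a), this decoupling yields
\[
\PP\!\left[\overline Y_j\in\Diamond;\; W_{j,B}\le -\delta\;\forall\,B\text{ critical}\right] \;=\; \mathcal C_{j,\delta}\cdot 2^{-G_j N}\int_{\Diamond}\be_j \exp\!\big[-\be_j x + o(1)\big]\, dx,
\]
which is exactly the main term claimed in the lemma.

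It would remain to show that imposing the further non-critical upper truncations $Y_{j,A}\le \be_j \widehat\a_j(A)(1+\rho)\sqrt N$ (for every $A\subset A_j\setminus A_{j-1}$ with $\widehat\a_j(A)>0$) changes the above probability only by $O(2^{-G_j N}e^{-const\cdot N})$. By a union bound the discrepancy is at most $\sum_A p_N^{>}(\Diamond,j,A,\rho)$, a finite sum, and each summand is $\lesssim 2^{-G_j N}e^{-const\cdot \rho^2 N}$ directly by Lemma~\ref{quadratic_expansion}(b), so the error is absorbed into the claimed $O(2^{-G_j N}e^{-const\cdot N})$. The hard part of the proof is really just the covariance computation $\E[W_{j,B}Y_j]=0$: once this Gaussian orthogonality is identified, the constant $\mathcal C_{j,\delta}$ survives the $N\to\infty$ limit as an honest probabilistic factor, and all remaining steps are bookkeeping built on the earlier estimates of Lemma~\ref{quadratic_expansion}.
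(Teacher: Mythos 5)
Your proposal is correct and follows essentially the same route as the paper: the zero-covariance computation $\E[W_{j,B}Y_j]=\E[Y_{j,B}^2]/\widehat\a_j(B)-\E[(Y^c_{j,B})^2]/\widehat\a^c_j(B)=0$, giving exact factorization of $\mathcal C_{j,\de}$ against $p_N(j,\Diamond)$ via Lemma \ref{quadratic_expansion}(a), is precisely the paper's key step. The treatment of the non-critical truncations by a union bound over the finitely many subsets $A$ and Lemma \ref{quadratic_expansion}(b) also matches the paper's bound on its term $(II)$.
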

\begin{proof}
Clearly, 
\beq \bea 
p_N^{\de, \rho}\big(j,{\Diamond}\big) &= \PP\Big[\overline Y_j \in {\Diamond};\;\forall\; \text{critical}\;B\subsetneq A_{j}\setminus A_{j-1}: \; {Y_{j,B}\over \widehat
\a_j(B)} - {Y_{j, B}^c \over \widehat \a_j^c(B)} \leq -\de \Big] + \\
& \quad - \PP\Big[\overline Y_j \in {\Diamond};\;\exists \; A\subset A_{j}\setminus A_{j-1}, \widehat \a_j(A) >0: \overline Y_{N,j}(A) > \be_j(1+\rho) \widehat \a_j(A) \sqrt{N} \Big] \\
& = (I) - (II).
\eea \eeq
As for $(I)$, we claim that, somewhat surprisingly, the random variable $\overline Y_{j} = \sqrt{N} Y_j -a_{N,j}$ is independent of the
collection \hbox{\small{$\Big( {Y_{j, B}\over \widehat \a_j(B)} - { Y_{j, B}^c\over\widehat \a_j^c(B)}; B\subsetneq A_{j}\setminus A_{j-1} \; \text{is critical} \Big)$}}. This is
best seen by inspection of the covariance: for critical $B$, since $Y_{j} = Y_{j, B} + Y_{j, B}^c$, we have
\[ 
\E\Bigg[ Y_j \cdot \Bigg( {Y_{j, B}\over \widehat \a_j(B)} - { Y_{j, B}^c\over\widehat \a_j^c(B)} \Bigg)\Bigg] = {1\over \widehat \a_j(B)} \E\big[Y_{j, B}^2 \big] - {1\over
\widehat \a_j^c(B)} \E\big[(Y_{j, B}^c)^2 \big] = 0, 
\]
and thus $(I) = {\mathcal C}_{j, \de} \times p_N(j, \Diamond)$ exactly. On the other hand, 
\[0 \leq (II) \leq \sum_{A\subset A_{j}\setminus A_{j-1}, \widehat \a_j(A)> 0} p_N^{>}(j, \Diamond, A, \rho).\]
The Lemma then obviously follows by the asymptotics established in Lemma \ref{quadratic_expansion}.  
\end{proof}

We may now move to the multivariate Poisson approximation of \eqref{multivariate}. First we observe that by Lemma \ref{miracle_constant}, 
\[  
\lim_{N\to \infty} \E\left[ \sum^{(r)} \de_{\overline X_{\s^r, \s(j)}}(B_r) \right] = \lim_{N\to \infty} 2^{G_j N} p_N^{\vare_1, \vare_2}(j, B_r) = \int_{B_r} {\mathcal C}_{j,
\vare_1} \be_j \exp\Big[-\be_j t \Big]dt = \mu_{\vare_1}(B_r). 
\]
According to \cite[p. 236]{barbour}, the multivariate Poisson convergence is equivalent to weak convergence of the sum of the vector's
componenent, $V_N \defi \sum_{r=1}^k \sum^{(r)} \de_{\overline X_{\s(1),\dots, \s(j)}}(B_r)$, towards a Poisson random variable, say $V$, of parameter $\sum_{r=1}^ k
\mu(B_r)$. To see that this is the case, we introduce the index set 
\[\Gamma\defi \left\{ (r, \s^r, \s(j)): r =1,\dots, k,\; \s(j)\in \Sigma_{N,A_j\setminus A_{j-1}}^{R,\vare_1, \vare_2} \right\}.\]
For given $\a = (r, \s^r, \s)\in \Gamma$, consider the subset $\Gamma_\a \subset \Gamma$ consisting of those 
$(q, \s^q, \tau) \in \Gamma$ with the random variables $\overline X_{\s^r, \s}$ and $\overline X_{\s^q, \tau}$ such that $\E\left(\overline X_{\s^r, \s}
\overline X_{\s^q,\tau}\right) \neq a_{N,j}^2$, that is they are correlated. (In the classical Chen-Stein terminology, $\Gamma_\a$ is the "weak dependency
neighborhood" of the index $\a$.)  We set 
\[
p_\a \defi \PP\Big[ \overline X_{\s^r, \s} \in B_r, (\s^r, \s) \;\text{satisfies truncation}\; {\bf T}_1(\vare_1), {\bf T}_2(\vare_2) \Big]
\] 
and define 
$Z_\a \defi \sum_{(q, \s^q, \tau)\in \Gamma_\a}^\star \de_{\overline X_{\s^q, \tau}}(B_q)$, the sum running over those configurations satisfying condition ${\bf
T_1}(\vare_1)$ and ${\bf T_2}(\vare_2)$.  According to the {\it Chen-Stein bound}, cfr. \cite[Theorem 1.A]{barbour}, the total variation distance between $V_N$ and $V$ is bounded above by
\beq \label{chenstein_bound}
\sum_\a \Big\{ p_\a^2 + \sum_{\a' \in \Gamma_\a} p_\a p_{\a'} \Big\} + \sum_{\a = (r, \s^r, \tau)\in \Gamma} \E[\de_{\overline X_{\s^r, \tau}}(B_r) 1_{{\bf T_1, T_2}\;
\text{are satisfied}} \times Z_\a]. 
\eeq
Writing things out, one immediate realizes that exactly the same terms as in Proposition \ref{onset_um_one} make their appearance in expression
\eqref{chenstein_bound}. (These terms are in fact taken care of by Lemma \ref{quadratic_expansion}.) Here is the upshot: the first sum is of order $\exp(- const \times N)$ for some positive $const$, while the second sum is
bounded, {\it mutatis mutandis}, by a constant times the l.h.s of \eqref{upper_bound_dependency}. The total variation distance between $V_N$ and $V$ is therefore of order 
$\exp(- const \times \vare_1\sqrt{N})$. Letting $N\to \infty$ yields the Poisson convergence and settles therefore the proof of Proposition \ref{partial_energies}.
\begin{flushright}
$\square$
\end{flushright}

\section{The Gibbs measure} \label{gibbs_measure}
For $ \be_m < \be <\be_{m+1}$ and $m$ strictly less than $K$, a partial structure only has
emerged. A portion of the system is {\it frozen} and displays hierarchical organization (the collection of points given by $\widehat X_{\s(1),\dots,
\s(m)}, \s \in \Sigma_{N,A_m}$). The portion of the system in {\it
high-temperature} shows no organization at all, and has negligible fluctuations: to be more precise, fix $\s \in \Sigma_{N,A_m}$ and
set
\[ Z_\s \defi \sum_{\tau \in \Sigma_N: \tau_{A_m} = \s} \exp\Bigg[\be\Big(X_{\tau(1),\dots,
\tau(m+1)}+\dots+X_{\tau(1),\dots, \tau(K)} \Big)  \Bigg]. \]
\begin{lem} \label{fluctuations_rem}
Let $\be_m <\be< \be_{m+1}$. There exist constants $\de_1, \de_2 \in (0,1)$ such that 
\[ 
\PP\Bigg[ \Big| \log {Z_\s \over \E[Z_\s]}\Big| \geq N^{-\de_1}\Bigg] \lesssim \exp\left[- N^{\de_2}\right]. 
\] 
\end{lem}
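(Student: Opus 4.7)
Since $\be\in(\be_m,\be_{m+1})$, the sub-cascade obtained by fixing the first $m$ levels to $\s$ sits strictly in its high-temperature phase: the partition function $Z_\s$ runs over $2^{(1-\g(A_m))N}$ Gaussian summands at an inverse temperature below the first critical threshold $\be_{m+1}$ of the sub-system. I would attack the lemma by a truncated second-moment method followed by Chebyshev's inequality. The truncation is essential because, already in the REM, the unconditional second moment of $Z_\s$ can blow up for $\be$ between $\be_{m+1}/\sqrt{2}$ and $\be_{m+1}$, even though $Z_\s/\E[Z_\s]$ concentrates.

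Concretely, introduce
\[
\tilde Z_\s \;:=\; \sum_{\tau:\tau_{A_m}=\s} e^{\be Y_\tau}\,\mathbf{1}\{\tau\in\mathcal{T}_N\},\quad \mathcal{T}_N := \big\{\tau:\widehat X_{\tau(1),\dots,\tau(j)}\leq N^{\alpha}\ \forall\, j=m+1,\dots,K\big\},
\]
for a small $\alpha\in(0,1)$. The Gaussian tail argument underlying \eqref{step_one} of Proposition \ref{existence_lemma}, applied to the sub-cascade, gives $\PP[Z_\s\neq\tilde Z_\s]\leq\exp(-cN^{2\alpha})$, while a direct Gaussian computation shows $\E[\tilde Z_\s]=(1-o(1))\E[Z_\s]$. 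Next I would compute $\E[\tilde Z_\s^2]$ by stratifying pairs $(\tau,\tau')$ with $\tau_{A_m}=\tau'_{A_m}=\s$ by their overlap $B=q(\tau,\tau')\supseteq A_m$. The diagonal term $B=A_m$ contributes $\E[\tilde Z_\s]^2(1+o(1))$; for each $B\supsetneq A_m$, the pair count $\sim 2^{(2-\g(A_m)-\g(B))N}$, combined with the truncated covariance moment generating function and divided by $\E[\tilde Z_\s]^2$, is bounded by $\exp(-c N^{2\alpha})$. The mechanism is that the $N^\alpha$-truncation reduces the effective MGF by exactly the margin that converts the plain inequality $\be^2<\rho(A_m,B)^2$ (guaranteed by $\be<\be_{m+1}=\hat\rho(A_m)=\min_{B\supsetneq A_m}\rho(A_m,B)$) into the stronger inequality required for the variance bound. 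Summing over the finite family of $B$'s yields $\mathrm{Var}(\tilde Z_\s)/\E[\tilde Z_\s]^2 \leq C \exp(-cN^{2\alpha})$.

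Chebyshev's inequality then gives $\PP[|\tilde Z_\s/\E[\tilde Z_\s]-1|>N^{-\de_1}]\leq N^{2\de_1} C \exp(-cN^{2\alpha}) \leq \exp(-N^{\de_2})$ for any $\de_1\in(0,\alpha)$ and any $\de_2$ slightly below $2\alpha$. Intersecting with $\{Z_\s=\tilde Z_\s\}$ and absorbing the $(1-o(1))$-correction between $\E[\tilde Z_\s]$ and $\E[Z_\s]$, the elementary bound $|\log(1+x)|\leq 2|x|$ (valid for $|x|\leq 1/2$) converts the multiplicative estimate on $Z_\s/\E[Z_\s]$ into the additive estimate on $\log(Z_\s/\E[Z_\s])$ claimed in the lemma.

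The main obstacle is the second-moment step for overlaps $B\supsetneq A_m$ that are \emph{not} part of the chain $\boldsymbol T$: for such $B$, the ratio $(\g(B)-\g(A_m))/(\a(B)-\a(A_m))$ need not match any $\rho$-value along the chain, and one must verify that the truncation scale $N^\alpha$ uniformly dominates the covariance contribution across all such $B$. This check relies on the minimality of $\hat\rho(A_m)$ among all $\rho(A_m,B)$ (part of the chain construction in \cite{bokis}) and on choosing $\alpha$ small enough not to spoil $\E[\tilde Z_\s]\approx \E[Z_\s]$ yet large enough to produce the required stretched-exponential margin that propagates into the final bound.
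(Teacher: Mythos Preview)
Your overall strategy---introduce a truncated partition function, show it approximates $Z_\s$, bound the variance of the truncated object, and conclude via Chebyshev and $|\log(1+x)|\le 2|x|$---is exactly the paper's. The gap is in the \emph{truncation set}. You impose $\widehat X_{\tau(1),\dots,\tau(j)}\le N^{\alpha}$ only for the chain levels $j=m+1,\dots,K$, i.e.\ you cap partial energies along $\boldsymbol T$ at the extreme-value scale $a_{N,j}+O(N^{\alpha})$. The paper instead truncates $X_\tau(A):=\sum_{J\in\mathcal P_{A\cup A_m}\setminus\mathcal P_{A_m}}X^J_{\tau_J}\le(\be+\varepsilon)\,\widehat\a_m(A)\,N$ for \emph{every} $A\subseteq I\setminus A_m$. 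This distinction is precisely the ``main obstacle'' you flag but do not resolve. When you stratify $\E[\tilde Z_\s^2]$ by the overlap $B=q(\tau,\tau')$, the shared Gaussian is $S=X_\tau(B\setminus A_m)$; your chain constraints bound only $S+T$ (with $T$ the unshared part), not $S$ itself. A concrete failure: take $K=m+1$ and a critical $B_1\subsetneq A_{m+1}$ (so $\rho(A_m,B_1)=\be_{m+1}$) with $\theta:=[\a(B_1)-\a(A_m)]/\Delta_{m+1}$ small, say $\theta=0.3$, $\be_{m+1}=\sqrt{2\log2}$, $\be=1$. A bivariate-Gaussian tilt shows that under your sole constraint $Y_\tau\le\be_{m+1}\Delta_{m+1}N+N^{\alpha}$ the stratum contribution to $\mathrm{Var}(\tilde Z_\s)/\E[\tilde Z_\s]^2$ is still $e^{cN}$ with $c>0$. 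The same happens for non-critical $B$ with $\be_{m+1}<\rho(A_m,B)<\sqrt2\,\be$. The paper's truncation, by capping $S$ directly at $(\be+\varepsilon)[\a(B)-\a(A_m)]N$, converts the available inequality $\be<\rho(A_m,B)$ into the second-moment gap $\g(A)\log2-[\be^2-(\be-\varepsilon)^2/2]\,\widehat\a_m(A)>0$ uniformly over $A$ (the quantity $\eta'$ of \eqref{rem_phase}), yielding a variance ratio $\lesssim e^{-\eta' N}$.

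Two smaller points. First, $\PP[Z_\s\neq\tilde Z_\s]\le e^{-cN^{2\alpha}}$ should be $e^{-cN^{\alpha}}$: the argument behind \eqref{step_one} gives decay linear in the threshold. Second, once the truncation is taken over all subsets as in the paper, the variance ratio is exponentially (not stretched-exponentially) small in $N$, so there is no delicate balance of $\alpha$ against a margin; any $\de_1\in(0,1)$ works and $\de_2$ can be taken close to $1$.
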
 

\begin{proof} This is an adaptation of \cite[Lemma 3.1]{bk2} to the more general setting considered here, so we only sketch the main differences. We first observe that  
\[
\E\big[ Z_{\s}\big] = \exp\left[\sum_{j=m+1}^K {\be^2\over 2} \D_j N + N G_j {\log 2} \right].
\] 
For $A\subset  (I\setminus A_m), \tau \in \Sigma_N$ and $\vare >0$ we set
\small{\[ 
X_{\tau}(A) \defi \sum_{J \in \widehat {\cal P}_{A, m}} X_{\tau_J}^{J},\quad \widehat Z_\s \defi \widehat \sum \exp\Big[\be\big(X_{\tau(1),\dots,
\tau(m+1)}+\dots+X_{\tau(1),\dots, \tau(K)}\big) \Big],  
\]}
where $\widehat \sum$ runs over those $\tau \in \Sigma_N$ such that  $\tau_{A_m} = \s$  and for all $A\subseteq I\setminus A_m$ the
random variables $X_{\tau}(A)$ are bounded by $(\be+\varepsilon) \widehat \a_m(A) N $. We proceed to show that the claim of the Lemma holds, at least for small enough
$\vare$. We first write 
\[
{Z_\s \over \E[Z_\s]} =
\frac{\widehat Z_\s}{\E[\widehat Z_\s]}\times  \frac{\E[\widehat Z_\s]}{\E[Z_\s]} + \frac{Z_\s - \widehat Z_\s}{\E[Z_\s]} = (I)\times
(II)+ (III).
\]
It is easily seen that to $\varepsilon>0$ one can find $\eta>0$ such that $1-e^{-\eta N} \leq (II)\leq 1$, for $N$ large enough. 
This, together with Markov inequality entails that \hbox{\small{$\PP\left[(III) \geq e^{- \eta N/2}\right]\lesssim e^{- \eta N/2}$}}.\\
Therefore,  on a set of $\PP$-probability exponentially close to unity, the following holds: 
\beq \label{bounds_fluctuations}
{Z_\s \over \E[Z_\s]} = (I)\times \left\{1 - O(e^{-const N})\right\} + O\left(e^{-const N}\right), 
\eeq
for $N\to \infty$ and some $const >0$ whose precise value is not important. In particular, we see from \eqref{bounds_fluctuations} that the claim of the Lemma follows as soon as we prove
that for some $\de_1, \de_2 \in (0,1)$ 
\beq  \label{claim_reformulated}
\PP\left[ \big| \log (I)\big| \geq N^{-\de_1}\right] \lesssim \exp\left[- N^{\de_2} \right].
\eeq
To see the latter, let us fix $\de_1 \in (0,1)$. We write: 
\beq \bea \label{rem_condition}
&\PP\left[ \big| \log (I)\big| \geq N^{-\de_1}\right] \\
& \qquad = \PP\Big[ (I) \geq \exp(N^{-\de_1}) \; \text{or} \; (I) \leq \exp(-N^{-\de_1})\Big] \\
& \qquad =  \PP\left[ \Big( (I) - 1\Big)^2 \geq (\exp(N^{-\de_1}) - 1)^2 \; \text{or}\; \Big( (I) - 1\Big)^2 \geq (\exp(-N^{-\de_1}) - 1 )^2\right] \\
& \qquad \leq \PP\left[ \Big( (I) - 1\Big)^2 \geq \min\left\{ (\exp(N^{-\de_1}) - 1)^2; (\exp(-N^{-\de_1}) - 1 )^2\right\} \right] \\
& \qquad \stackrel{\text{(Markov)}}{\leq} {1\over m(N, \de_1)} {\E\Big[ (\widehat Z_\s -
\E[\widehat Z_\s]\big)^2\Big]\over \E\big[\widehat Z_\s\big]^{2} }, 
\eea \eeq
with $m(N, \de_1) \defi \min\left\{ (\exp(N^{-\de_1}) - 1)^2; (\exp(-N^{-\de_1}) - 1 )^2\right\}$. It is now crucial that $\be < \be_{m+1}$ strictly: this
ensures that for $\varepsilon$ small enough (recall the construction of the chain ${\bf T}$) we have  
\beq \label{rem_phase}
\eta' \defi \inf_{A\subset (I\setminus A_m)} \left\{\g(A) \log 2 - \left[\be^2 - {\big(\be-\varepsilon\big)^2\over 2} \right] \widehat \a_m(A) \right\} >0.
\eeq
Given this, expanding the square in the numerator of the r.h.s of \eqref{rem_condition} and exploiting the usual bounds on gaussian integrals yields 
\beq \bea 
\PP\Big[\big|\log(I)\big| \geq N^{-\de_1} \Big] &\lesssim {1\over m(N, \de_1)} \sum_{A\subset (I\setminus A_m)}  2^{-\g(A)N} \exp\Big[ N\big(\be^2 - {(\be -\varepsilon)^2\over 2}\big) 
\widehat\a_m(A) \Big] \\
& \stackrel{\eqref{rem_phase}}{\lesssim} {\exp\big[- \eta' N \big] \over m(N, \de_1)}, 
\eea \eeq
which is clearly more than needed to get \eqref{claim_reformulated}. Lemma \ref{fluctuations_rem} then easily follows. 
\end{proof}

\begin{lem} \label{summability}
Let $\epsilon >0$. There exists positive $\phi$ such that
\beq \label{summabilityone}
\PP\left[ \sum_{\exists j \leq m: \widehat X_{\s(1),\dots, \s(j)} \leq -\phi } \exp\Big[\be\big(X_{\s}-a^m_N\big)\Big] \geq \epsilon \right] \leq \epsilon.
\eeq
\end{lem}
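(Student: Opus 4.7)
The plan is to exploit the independence between the Gaussians indexed by subsets $J\subset A_m$ and those indexed by $J\not\subset A_m$ to factorize the sum in \eqref{summabilityone}. Writing $Y_\s \defi Z_\s/\E[Z_\s]$, a direct check using the definition of $a_N^m$ (noting $e^{\be a_N^m}=e^{\be\sum_{j\le m}a_{N,j}}\cdot\E[Z_\s]$) gives
\[
S \;=\; \sum_{\substack{\s\in\Sigma_{N,A_m}\\\exists\, j\le m,\,\widehat X_{\s(1),\dots,\s(j)}\le -\phi}}\! \exp\!\bigl[\be\,\widehat X_{\s(1),\dots,\s(m)}\bigr]\,Y_\s,
\]
where the family $(Y_\s)_{\s\in\Sigma_{N,A_m}}$ is independent of the partial-energy process $\bigl(\widehat X_{\s(1),\dots,\s(j)}\bigr)$ and each $Y_\s$ is close to $1$ in the quantitative sense of Lemma~\ref{fluctuations_rem}. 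I then split $S = S^{\rm bulk}+S^{\rm tail}$ according to whether $\widehat X_{\s(1),\dots,\s(m)}\in[-L,L]$ or not, for a large cutoff $L$ to be chosen first.

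For $S^{\rm tail}$ I would rely on the tightness of the shifted partition function $\mathcal Z_N^Y\defi\sum_{\s\in\Sigma_{N,A_m}} e^{\be\widehat X_{\s(1),\dots,\s(m)}}Y_\s=\sum_\tau e^{\be(X_\tau-a_N^m)}$ — this tightness follows from Proposition~\ref{partial_energies} (weak convergence of the partial-energy point process to the Derrida--Ruelle cascade, whose partition function $\sum_{\bf i}e^{\be y_{\bf i}}$ is a.s.\ finite for $\be>\be_m$) combined with Lemma~\ref{fluctuations_rem} to absorb the $Y_\s$-factors. Writing $S^{\rm tail}\le \mathcal Z_N^Y-\mathcal Z_N^{Y,[-L,L]}$ and sending $L\to\infty$, one obtains $\PP[S^{\rm tail}\ge \e/3]\le \e/3$ for $L$ and $N$ large enough.

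For $S^{\rm bulk}$, Propositions~\ref{existence_lemma} and~\ref{random_configuration_space} allow me to pick $R=R(L)$ and small $\vare_1,\vare_2$ so that, with probability at least $1-\e/6$, every $\s$ with $\widehat X_{\s(1),\dots,\s(m)}\in[-L,L]$ lies in the truncated space $\Sigma_{N,A_m}^{R,\vare_1,\vare_2}$. In this space, Proposition~\ref{partial_energies} guarantees that the number of contributing configurations is bounded in probability by some constant $\mathsf N$; a union bound of Lemma~\ref{fluctuations_rem} over these finitely many terms yields $Y_\s\in[\tfrac12,2]$ for all of them, so that
\[
S^{\rm bulk}\;\le\;2\!\!\sum_{\substack{\s\in\Sigma_{N,A_m}^{R,\vare_1,\vare_2}\\\exists\, j,\,\widehat X_{\s(1),\dots,\s(j)}\le -\phi\\ -L\le\widehat X_{\s(1),\dots,\s(m)}\le L}}\!\!\exp\!\bigl[\be\,\widehat X_{\s(1),\dots,\s(m)}\bigr].
\]
By Proposition~\ref{partial_energies} the right-hand side converges in distribution to the corresponding cascade quantity, which tends to zero in probability as $\phi\to\infty$: decomposing along the first level $j^\star$ at which $\widehat y_{\mathbf i_{j^\star}}\le -\phi$, the ensuing sub-cascade partition functions $W_{\mathbf i_{j^\star}}$ are iid and a.s.\ finite (using $\be>\be_m\ge\be_{j^\star}$), while the ``entry'' factor $\sum_{\widehat y_{\mathbf i_{j^\star}}\le -\phi}e^{\be\widehat y_{\mathbf i_{j^\star}}}$ is a Poisson integral of order $e^{-(\be-\be_{j^\star})\phi}$. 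Combining all three estimates, and choosing first $L$, then $R,\vare_1,\vare_2$, then $\phi$, and finally $N$ large enough, yields $\PP[S\ge \e]\le \e$.

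The main technical hurdle will be orchestrating the truncations so that the union bound from Lemma~\ref{fluctuations_rem} remains effective despite the exponentially large cardinality of $\Sigma_{N,A_m}$: this is made possible precisely because, in the $N\to\infty$ limit, only $O(1)$ many configurations contribute to a bounded window of the extremal process.
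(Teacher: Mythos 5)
Your factorization $e^{\beta(X_\sigma-a_N^m)}=e^{\beta\widehat X_{\s(1),\dots,\s(m)}}\,Y_\s$ with $Y_\s=Z_\s/\E[Z_\s]$ independent of the partial energies is correct, and your bulk term is treated in a legitimate (if heavier) way. The genuine gap is the tail term $S^{\rm tail}$, i.e.\ the configurations with $\widehat X_{\s(1),\dots,\s(m)}<-L$. You dispose of it by invoking ``tightness of the shifted partition function'', claimed to follow from Proposition \ref{partial_energies} plus Lemma \ref{fluctuations_rem}. This is circular: Proposition \ref{partial_energies} is weak (vague) convergence of the marked point process, which only controls the points falling into compact windows; it says nothing about the aggregate weight $\sum_{\widehat X<-L}e^{\beta\widehat X_{\s(1),\dots,\s(m)}}Y_\s$ carried by the exponentially many configurations to the left of the window. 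Controlling exactly that quantity, uniformly in $N$, is the content of Lemma \ref{summability} itself --- and indeed in the paper it is this lemma (together with Proposition \ref{existence_lemma}) that is later used to justify the convergence of the normalization $Z_{a_N}(\be)$ in the proof of the Main Theorem, not the other way around. The auxiliary device of absorbing the $Y_\s$'s by a union bound over Lemma \ref{fluctuations_rem} also fails in the tail region: that lemma gives only a stretched-exponential bound $\exp[-N^{\de_2}]$, which cannot be unioned over the $2^{\g(A_m)N}$ configurations involved there; the correct way to handle the $Y_\s$'s off the window is simply $\E Y_\s=1$ and a first-moment bound.

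That first-moment bound is in fact the whole of the paper's proof, and it is much shorter than your scheme: by \eqref{step_one} of Proposition \ref{existence_lemma} one may work on the event that all partial energies are bounded above by some $C$ (cost $\e/2$); on that event one applies Markov's inequality to the restricted sum, integrates out the levels $m+1,\dots,K$ (their contribution cancels exactly against the second block of $a_N^m$ in \eqref{aenne}), and invokes Lemma \ref{control} to get the bound
\[
\e^{-1}\sum_{j\leq m}\exp\Big[\textstyle\sum_{l\neq j}(\be_{l+1}-\be_l)C-(\be_{j+1}-\be_j)\phi+o(1)\Big],
\]
which is made $\leq\e/2$ by taking $\phi$ large. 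No weak-convergence input, no cascade computation, and no splitting into bulk and tail is needed. If you want to salvage your structure, replace the tail step by precisely this truncated first-moment estimate; as written, the proposal assumes what is to be proved.
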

\begin{proof}
By Proposition \ref{localized_r} we can find $C>0$ such that (for large enough $N$)
\[
\PP\left[\forall j\leq m, \forall \tau \in \Sigma_{N,A_j}\; \widehat X_{\tau(1),\dots, \tau(j)} \leq C\right] \geq 1- \epsilon/2,
\]  
in which case the l.h.s of \eqref{summabilityone} is then bounded by \hbox{\small{$\PP\Big[ \widehat \sum \exp\big[\be(X_{\s}-a^m_N)\big] \geq \epsilon \Big]
+\epsilon/2,$}} with $\widehat \sum$ running over those $\s \in \Sigma_{N}$ such that $\widehat X_{\s(1),\dots, \s(l)}\leq C$ for all $l = 1,\dots, m$ but
$\widehat X_{\s(1),\dots, \s(j)} \leq -\phi$  for some $j=1,\dots, m$. We have:
 \beq \bea \label{summability_two}
&\PP\Big[ \widehat \sum \exp\big[\be(X_{\s}-a^m_N)\big] \geq \epsilon \Big] \leq  \\
&\leq {\epsilon^{-1}} \mathop{\sum_{\s \in \Sigma_N}}_{j=1,\dots, m} \E\Big[
\exp\big[\be(X_\s-a^m_N)\big]; \forall l\leq m: \widehat X_{\s(1),\dots, \s(l)} \leq C,\; \widehat
X_{\s(1),\dots, \s(j)} \leq -\phi \Big]\\
& \leq \epsilon^{-1}2^{\g(A_m) N} \sum_{j=1}^{m}\E\Big[\exp\big[\be\widehat Y_m\big];\; \forall l\leq m: \widehat Y_l
\leq C,\; \text{but}\; \widehat Y_j \leq -\phi \Big]\\
& \lesssim  \epsilon^{-1} \sum_{j\leq m} \exp\Big[ \sum_{l\neq j} (\be_{l+1}-\be_l)C - \big(\be_{j+1}-\be_j\big)\phi + o(1)\Big]
\eea \eeq
(the first step above by Markov inequality, the second by simply integrating out the unrestricted random variables $X_{\s(1),\dots, \s(l)}$ for $l=m+1,\dots, K$, and the
third by Lemma \ref{control}). It thus suffices to choose $\phi$ large enough in the positive to have $\eqref{summability_two}\leq \epsilon/2$.  
\end{proof}

\begin{prop} \label{localization}
Let $\epsilon >0$ and $\be \in (\be_{m}, \be_{m+1})$ $(m= 1, \dots, K)$. There exists $C >0$ such that 
\[\PP\left[ {\mathcal G}_{\be, N}\left(\exists j\leq m:\; \overline X_{\s(1),\dots, \s(j)} \notin \left[-C, C\right]\right) \geq \epsilon\right] \leq \epsilon\] 
for large enough $N$. 
\end{prop}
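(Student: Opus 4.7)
The event $\{\exists\, j\leq m:\overline X_{\s(1),\dots,\s(j)}\notin[-C,C]\}$ depends only on the projection $\tau=\s_{A_m}$, so the plan is to pass to the $A_m$-marginal. First I would rewrite the Gibbs measure with the shift $a_N^m$ and perform the inner sum over configurations extending a fixed $\tau\in\Sigma_{N,A_m}$: separating the levels $\leq m$ (which contribute $\exp[\be\widehat X_{\tau(1),\dots,\tau(m)}]$ after subtracting $\sum_{j\leq m}a_{N,j}$) from the levels $>m$ (which contribute $Z_\tau$ after subtracting the free-energy counterterms), one gets the identity
\[
\sum_{\s:\s_{A_m}=\tau}\exp[\be(X_\s-a_N^m)]=\exp[\be\widehat X_{\tau(1),\dots,\tau(m)}]\cdot\frac{Z_\tau}{\E[Z_\tau]},
\]
where $Z_\tau,\E[Z_\tau]$ are as in Lemma \ref{fluctuations_rem}. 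The problem thus reduces to bounding $\mathcal N_C/\mathcal D$ with $\mathcal D=\sum_\tau e^{\be\widehat X_{\tau(1),\dots,\tau(m)}}Z_\tau/\E[Z_\tau]$ and $\mathcal N_C$ the analogous sum restricted to the set $B_C$ of $\tau$ such that some $\overline X_{\tau(1),\dots,\tau(j)}\notin[-C,C]$ for $j\leq m$.

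For the denominator I would use Proposition \ref{partial_energies} (applied with $j=m$), which says the point process $(\widehat X_{\tau(1),\dots,\tau(m)})$ converges weakly to the Derrida-Ruelle cascade. Since $\be>\be_m$, the limiting sum $\sum_{\boldsymbol i}e^{\be y_{\boldsymbol i}}$ is a.s.\ finite and strictly positive, and the number of atoms of the limit above any fixed level $-R$ is a.s.\ finite. Fixing $\eta>0$, for large $R$ there is, with probability $\geq1-\eta$, at least one (and only finitely many, say at most $L$) $\tau$ with $\widehat X_{\tau(1),\dots,\tau(m)}\geq -R$. Crucially, $Z_\tau$ depends only on the random variables at levels $j>m$ and is therefore independent of $(\widehat X_{\tau(1),\dots,\tau(m)},\tau\in\Sigma_{N,A_m})$; conditioning on the first $m$ levels fixes this list of relevant $\tau$'s, and Lemma \ref{fluctuations_rem} combined with a union bound over the $L$ relevant indices yields $Z_\tau/\E[Z_\tau]\geq 1/2$ for all of them with probability $\geq1-\eta$ (the bound $L\cdot\exp[-N^{\de_2}]\to 0$ holds since $L$ is bounded in $N$). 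On this good event $\mathcal D\geq\tfrac12 e^{-\be R}$.

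For the numerator I would split $B_C$ according to whether the partial \emph{cumulative} energies $\widehat X_{\tau(1),\dots,\tau(j)}$ stay in some window $[-\phi,C_0]$ or not. Step one of the proof of Proposition \ref{existence_lemma} (inequality \eqref{step_one}) supplies a $C_0$ with $\PP[\exists \tau,\ \widehat X_{\tau(1),\dots,\tau(l)}>C_0\ \text{for some}\ l\leq m]\leq\eta$. Lemma \ref{summability} supplies a $\phi$ such that the contribution of those $\tau$ with some $\widehat X_{\tau(1),\dots,\tau(j)}\leq-\phi$ to $\sum_{\s:\s_{A_m}=\tau}\exp[\be(X_\s-a_N^m)]$ is $\leq\eta$ with probability $\geq1-\eta$; since this sum equals $e^{\be\widehat X_\tau}Z_\tau/\E[Z_\tau]$, this is precisely the bound needed for the corresponding portion of $\mathcal N_C$. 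The remaining $\tau\in B_C$ have all $\widehat X_{\tau(1),\dots,\tau(j)}\in[-\phi,C_0]$ for $j\leq m$; the telescoping identity $\overline X_{\tau(1),\dots,\tau(j)}=\widehat X_{\tau(1),\dots,\tau(j)}-\widehat X_{\tau(1),\dots,\tau(j-1)}$ then forces $|\overline X_{\tau(1),\dots,\tau(j)}|\leq C_0+\phi$, so choosing $C>C_0+\phi$ makes this subset empty on the good event. Combining, $\mathcal N_C/\mathcal D\leq 2\eta e^{\be R}$ with probability $\geq 1-O(\eta)$; choosing $\eta$ and then $C$ accordingly yields the claim.

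The main obstacle is the denominator lower bound: a naive union bound on $Z_\tau/\E[Z_\tau]$ across all $2^{\gamma(A_m)N}$ configurations fails because Lemma \ref{fluctuations_rem} gives only a stretched exponential tail $\exp[-N^{\de_2}]$ with $\de_2<1$. What makes the argument go through is the independence between $Z_\tau$ and the first $m$ levels, which lets us condition on the partial-energy landscape, extract the finitely many $\tau$'s that matter (finite because $\be>\be_m$ and by Proposition \ref{partial_energies}), and then apply Lemma \ref{fluctuations_rem} only on this short list.
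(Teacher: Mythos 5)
Your proposal is correct and follows essentially the same route as the paper's proof: the same factorization $\sum_{\s:\s_{A_m}=\tau}\exp[\be(X_\s-a_N^m)]=e^{\be\widehat X_{\tau(1),\dots,\tau(m)}}\,Z_\tau/\E[Z_\tau]$, the same lower bound on the partition function via Proposition \ref{partial_energies} (a point above $-R$, finitely many relevant $\tau$'s) combined with Lemma \ref{fluctuations_rem} on that finite list, and the same numerator control via \eqref{step_one} and Lemma \ref{summability}. The only cosmetic difference is that the paper first proves the statement for the cumulative energies $\widehat X$ and then passes to $\overline X$, whereas you build the telescoping step (choosing $C>C_0+\phi$) directly into the argument — the content is identical.
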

\begin{proof}
We will prove that to arbitrary $\epsilon >0$ there exists $\widehat C>0$ such that 
\beq \label{variant}
\PP\left[ {\mathcal G}_{\be, N}\left(\exists j\leq m:\;
\widehat X_{\s(1),\dots, \s(j)} \notin \left[-\widehat C,\widehat C\right]\right) \geq
\epsilon\right] \leq\epsilon,
\eeq
as this obviously implies that there exist $\overline C>0$ such that the claim of Proposition \ref{localization} holds. To see
\eqref{variant}, we first modify the definition of the Gibbs measure slightly, subtracting the
constant $\be a_N$ to the energies: \hbox{\small{$ {\mathcal G}_{\be, N}(\s) = \exp\big[ \be(X_\s- a^m_N)\big]\big/ Z_{a^m_N}(\be)$}}
with \hbox{\small{$Z_{a^m_N}(\be) \defi \sum_{\tau \in \Sigma_N} \exp\big[ \be(X_\tau- a^m_N)\big]$}}.

We now claim that to given $\epsilon$ there exists $\eta>0$ such that, for $N$ large enough 
\beq \label{localization_one}
\PP\Big[Z_{a^m_N}(\be) \leq \eta \Big] \leq {\epsilon \over 2}.
\eeq 
The l.h.s above is to any $R>0$ evidently bounded by 
\[
\PP\Bigg[\widehat\sum_R \exp\Big[\be\big(\widehat
X_{\s(1),\dots, \s(m)} + {1\over \be} \log {Z_{\s(1),\dots, \s(m)} \over \E[Z_{\s(1),\dots, \s(m)}]}\big) \Big] \leq \eta \Bigg]
\]
with $\widehat \sum_R$ running over those $\s\in \Sigma_{N,A_m}$ only such that $\widehat X_{\s(1),\dots, \s(m)} \in (-R,R)$. It is also
easily seen that to any $\epsilon'>0$ this sum runs over at most ${\sf N}={\sf N}(\epsilon')$ configurations with $\PP$-probability greater than $(1-\epsilon')$. By Lemma \ref{fluctuations_rem}
the contributions of each term $\log \big(Z_\s/\E[Z_\s]\big)$ associated to these $\sf N$ configurations is in the large $N$ limit irrelevant. It is therefore sufficient to prove that to $\tilde \epsilon$ there exist $\tilde \eta$ such that
\[ 
\PP\left[\widehat \sum_R \exp\Big[\be \widehat
X_{\s(1),\dots, \s(m)} \Big] \leq \tilde \eta \right] \leq {\tilde \epsilon \over 2}.
\]
This is however straightforward, since for $x < -R$    
\beq 
\widehat \sum_R \exp\Big[\be \widehat
X_{\s(1),\dots, \s(m)} \Big] \leq  \exp(\be x) \Longrightarrow  \sharp\Big\{\s\in \Sigma_{N, A_m}:\; \widehat X_{\s(1),\dots, \s(m)} \geq -R\Big\} = 0.
\eeq
By Proposition \ref{partial_energies} and the properties of the limiting process $\widehat X_m$, it is easily seen that the probability of
the event on the r.h.s above can be made (for large enough $N$) as small as needed by simply choosing $R$ large enough in the positive.
On the other hand,  by Proposition \ref{existence_lemma} and Lemma \ref{summability}, to given $\eta, \epsilon>0$ we can find positive $\widehat C$ such that 
\[
\PP\left[ \sum_{ \s\in \Sigma_{N};\; \exists j\leq m:\; \widehat X_{\s(1),\dots, \s(j)}\notin \left[-\widehat C,\widehat C\right]}\exp\Big[\be(X_\s-a^m_N)
\Big] \geq \eta \epsilon\right] \leq {\epsilon \over 2},
\]
which together with \eqref{localization_one} yields  \eqref{variant} and thus settles the proof of Proposition \ref{localization}.
\end{proof}

\section{Proof of Theorem \ref{main_theorem}} \label{proof_main_theorem}

\noi{\bf The Gibbs measure, $\boldsymbol{\be> \be_K}$.} Recall that $\Xi_{\be, N}$ is the law on \hbox{\small{${\mathcal M}_{mp}\left( (\R^+)^{(2)} \times 2^{I} \right)$}}  naturally induced by the points
\hbox{\small{$(\exp[\be(X_{\s} -a_N)]\big/ Z_{a_N}(\be), \s\in \Sigma_{N})$}}. 

Set $H_{N,K} \defi \left( \exp\left[\be(X_\s -a_N)\right], \s\in \Sigma_N \right)$.
This is nothing else than the image of the PP of the energy levels under the mapping $\exp(\be \cdot)$, in which case {\it (cfr. \cite[Prop.
8.5]{boszni} and a straightforward generalization)} it follows by Proposition \ref{partial_energies} that $H_{N,K}$ converges weakly to a PP $H_K \defi (\eta_{\bf i}, {\bf i} \in \N^K)$ with $\eta_{\bf i} = \eta^1_{{\bf i}_1}\eta^2_{{\bf i}_2}\cdots \eta^K_{{\bf
i}_K}$ and the following properties:  For $l\leq K$ and multi-index ${\bf i}_{l-1}$, the point process \hbox{\small{$(\eta_{{\bf i}_{l-1},
i_l}^l;\; i_l\in \N)$}} is poissonian with density ${\mathcal C}_l x_l(\be) \cdot t^{-x_l(\be)-1} dt$ on $\R^+$;  The $\eta^l$ are independent for different $l$; $(\eta_{{\bf i}_{l-1}, i_l}^l;\; i_l \in \N)$ are independent for different ${\bf i}_{l-1}$.
Given such a PP, it is easily seen that $\sum_{{\bf i}} \eta_{\bf i} <\infty$ almost surely. (This is mainly due to
the fact that $x_1(\be) < x_2(\be) < \dots < x_K(\be)$. For more on this, cfr. \cite[Prop. 9.5]{boszni}
and a straightforward generalization.) We may thus consider the new collection of {\it normalized} points given by $(\overline {\eta_{\bf i}}; {\bf i}\in
\N^K)$, which induces naturally an element of \hbox{\small{${\mathcal
M}_{mp}\left( (\R^+)^{(2)} \times 2^I\right)$}} with possible marks those from the chain ${\bf T} = \{A_0, A_1, \dots, A_K\}$ only. We denote by $\Xi_{\be}$ its law. \\

\noi With the new notation $Z_{a_N}(\be) = \int x H_{N,K}(dx)$, and by Proposition \ref{existence_lemma} and Lemma \ref{summability} we have
that to $\epsilon>0$ there exists $C>0$ such that 
\[
\PP\Bigg[ \int_0^{1/C} x H_{N,K}(dx)+ \int_{C}^{\infty} x H_{N,K}(dx) \geq \epsilon \Bigg]\leq \epsilon,
\]
for large enough $N$. This implies that by uniformly approximating $f(x)=x$ through continous functions of the form
\small{\begin{equation} 
\tilde f(x)=
\begin{cases}
x, \quad &x\in [1/C, C]\\ 
0,\quad &x\notin [1/ 2C, 2 C]\\
\end{cases}
\quad \text{and}\quad \tilde f(x)\leq x,\; \forall x\in \R_+,
\end{equation}}
we have weak convergence of $Z_{a_N}(\be)$ to 
$\int x H_{K}(dx) = \sum_{\bf i} \eta_{\bf i}$. But by continuity of the mapping
\[ \bea
{\mathcal M}_{mp}\left( (\R^+)^{(2)} \times 2^{I} \right)\times (0, \infty) &\to {\mathcal M}_{mp}\left( (\R^+)^{(2)} \times 2^{I}\right) \\
\left(\sum_{i} \de_{\{y_i;\, f_i\}}, \, A\right) &\mapsto  \sum_i \de_{\{{y_i/A}; \; f_i\}}
\eea\]
and Proposition \ref{partial_energies}, we then also have that $\Xi_{N, \be}$ converges weakly to $\Xi_{\be}$. \\

It is not difficult to see that the laws $\Xi_{\be}$ and $P_{x_K} \sqcap Q_{{\bf T}, {\bf t}}$ coincide (this easily follows from the way the coalescent \cite{boszni} is constructed). This settles the proof of the first claim. \\

\noi{\bf The marginal, $\boldsymbol{\be> \be_m}$.} For convenience, we assume that $\be \in
(\be_k, \be_{k+1})$ for some $k\geq m$ and regard ${\mathcal G}_{\be, N}^{(m)}$ as a marginal of ${\mathcal G}_{\be, N}^{(k)}$: for $\s\in \Sigma_{N,A_m}$ we write 
\[
{\mathcal G}_{\be,N}^{(m)}(\s) = \sum_{\tau \in \Sigma_{N}: \tau_{A_m} = \s} \exp\Big[\be \widehat X_{\tau(1),\dots,
\tau(k)} +  \log {Z_{\tau(1),\dots,\tau(k)}\over \E[\tau(1),\dots,\tau(k)]} \Big] \Big/ Z_{a_N}(\be). 
\]

We now claim that the weak limit of ${\mathcal G}_{\be,N}^{(m)}$ coincides with that of the process naturally induced by the points 
\[
\widehat {\mathcal G}^{(m)}_{\be,N}(\s) \defi \mathop{\sum_{\tau \in \Sigma_{N, A_k},}}_{\tau_{A_m = \s} }{ \exp\big[\be \widehat X_{\tau(1),\dots, \tau(k)}\big] \over
\widehat Z_{m}(\be)}, \quad \widehat Z_{m}(\be) \defi \sum_{\eta\in \Sigma_{N,A_k}} \exp\big[ \be \widehat X_{\eta(1),\dots, \eta(k)}\big].
\]
In fact, by Proposition \ref{localization}, to given $\epsilon>0$ there exists $C>0$ such that 
\[ 
\PP\Big[ {\mathcal G}_{\be, N}^{(k)}\Big( \s \in \Sigma_N:\; \overline X_{\s(1),\dots, \s(l)}\in [-C, C]\; \forall l\leq k \Big) \geq 1-\epsilon \Big] \geq
1-\epsilon,
\] 
for large enough $N$. Moreover, there exists ${\sf N}={\sf N}(\epsilon)$ such that $\PP\big[ \sharp\{\Sigma_{N,
A_k}^{C} \}\geq {\sf N} \big] \leq \epsilon$, and by Lemma \ref{fluctuations_rem} the
fluctuations of these $\sf N$ r.v.'s \hbox{\small{$\log Z_{\tau(1),\dots, \tau(k)}/ \E[Z_{\tau(1),\dots, \tau(k)}]$}} are negligible. \\

Therefore, the weak limit of ${\mathcal G}_{\be,N}^{(m)}$ and ${\widehat{\mathcal G}}_{\be,N}^{(m)}$ coincide. \\

\noi We rewrite the points as 
\[\bea 
&\hspace{2cm} \widehat {\mathcal G}^{(m)}_{\be, N}(\s) = {\exp \be \big[\widehat X_{\s(1),\dots, \s(m)}+ U_{\s(1),\dots, \s(m)} \big]\over  \widehat Z_m(\be)}\\
& U_{\s(1),\dots, \s(m)} = {1/\be} \log \mathop{\sum_{\tau \in \Sigma_{N, A_k},}}_{\tau_{A_m} = \s} \exp \be\Big[ \overline X_{\tau(1),\dots,
\tau(m+1)}+\dots \overline X_{\tau(1),\dots, \tau(k)} \Big].
\eea \]
To fixed $\s \in \Sigma_{N, A_m},\; U_{\s} = U_{\s(1),\dots, \s(m)}$ is (up to a constant) the logarithm of the {\it partition function} of an irreducible
hamiltonian in low temperature ($\be > \be_m$). A fixed realization $(\widehat X_{\s(1),\dots, \s(m)} + U_{\s(1),\dots, \s(m)}; \s\in \Sigma_{N,A_m})$ induces naturally
an element of \hbox{\small{${\mathcal M}_{mp}\big( {\R}^{(2)} \times 2^{A_m}\big)$}}, whose law is denoted $\widehat {XU}_{N,m}$. By
Proposition \ref{partial_energies}, and the considerations in the proof of claim a) it is thus easily seen that that $\widehat {XU}_{N,m}$ converges
weakly to the law $\widehat {XU}_m$ of the process on \hbox{\small{${\mathcal M}_{mp}\big( {\R}^{(2)} \times 2^{A_m}\big)$}} (with the possible marks being those from the
restricted chain ${\bf T}^{(m)} = \{A_0, \dots, A_m\}$ only) induced by the collection of points given by $( u _{\bf i} + U_{\bf i}; {\bf i}\in \N^m)$ where 
\[ 
u_{\bf i} \defi u_{{\bf i}_1}^1 + \dots + u_{{\bf i}_{m}},\; U_{\bf i} \defi {1\over \be} \log \sum_{i_{m+1},\dots, i_{k}} \exp\Big[\be\big(u_{{\bf i}_m, i_{m+1}}^{m+1}+\dots+
u^k_{{\bf i}_m, i_{m+1},\dots, i_k} \big) \Big].
\]
For $l= 1,\dots, k$ and any multi-index ${\bf i}_{l-1}$ the point process \hbox{\small{$(u_{{\bf i}_{l-1}, i_l}^l;\; i_l\in \N)$}}
is poissonian with density \hbox{\small{${\mathcal C}_l \be_l \exp(-\be_l t)dt$}}. The $u^l$ are independent for different $l$ and \hbox{\small{$(
u_{{\bf i}_{l-1}, i_l}^l;\; i_l \in \N)$}} are independent for different ${\bf i}_{l-1}$. An important observation is that to fixed ${\bf i}_{m-1}$ the
PP $(u^m_{{\bf i}_{m-1}, i_m} + U_{{\bf i}_{m-1}, i_m};\; i_m\in \N)$ is simply a shift by independent
variables of a PPP, in which case it is easy to see that  
\beq \label{equality_distribution} 
\Big(u^m_{{\bf i}_{m-1}, i_m} + U_{{\bf i}_{m-1}, i_{m}} - const;\; i_m\in \N\Big) \stackrel{\text{(distr)}}{=} \Big(u^m_{{\bf i}_{m-1}, i_m};\; i_m\in \N\Big),
\eeq 
for some {\it const $>0$}, cfr. \cite[Prop. 8.7]{boszni} and a straightforward generalization. By continuity under mappings, the process on \hbox{\small{${\mathcal
M}_{mp}\left((\R^+)^{(2)} \times 2^{A_m} \right)$}} induced by the points \hbox{\small{$\left(\exp\be \big[\widehat X_{\s(1),\dots, \s(m)}+ U_{\s(1),\dots, \s(m)}-const\big]; \;
\s\in \Sigma_{N, A_m}\right)$}} converges weakly to the process induced by the points $\big(\exp[\be u _{\bf i}];\; {\bf i}\in \N^m\big)$. To get the weak limit
of $\Xi_{\be, N}^{(m)}$ it then suffices to prove that the normalization procedure commutes with the limit $N\to \infty$; this is done exactly as in case a); the proof of the Main Theorem is completed. 
\begin{flushright}
$\square$
\end{flushright}

{\large \bf Acknowledgments.} The research of E.B. was partly supported by a grant of the Swiss National Foundation under Contract No. 200020-116348. Part of the research of N.K. was carried out while visiting the ENS Lyon as a Postdoctoral Fellow: many thanks are due to the
members of the Mathematical Department U.M.P.A. for the friendly and inspiring athmosphere. N.K. also gratefully acknowledges financial support of the Swiss
National Foundation under Contract No. PBZH22-118826, and of the Deutsche Forschungsgemeinschaft under Contract no. DFG GZ BO 962/5-3.

\end{document}